\newcommand{\st}[1]{\ensuremath{^{\scriptstyle \textrm{#1}}}}
\newcommand\bigcheck[1]{#1 \raise1ex\hbox{$\hspace{-1ex}{}^\vee$}}
\newcommand\sucheck[1]{#1 \raise0.5ex\hbox{$\hspace{-1ex}{}^\vee$}}
\newcommand{\alphaparenlist}{
  \renewcommand{\theenumi}{\alph{enumi}}%
  \renewcommand{\labelenumi}{(\theenumi)}%
}
\newcommand{\ad}{\mathop{\rm ad}\,}
\newcommand{\Aut}{{\rm Aut}}
\newcommand{\gl}{g\ell}
\newcommand{\Lie}{{\rm Lie}}
\newcommand{\rank}{\rm rank \, }
\newcommand{\rk}{\rm rk \, }
\renewcommand{\sl}{s\ell}
\newcommand{\ZZ}{\mathbb{Z}}
\newcommand{\FF}{\mathbb{F}}
\newcommand{\fa}{\mathfrak{a}}
\newcommand{\fg}{\mathfrak{g}}
\newcommand{\fh}{\mathfrak{h}}
\newcommand{\fk}{\mathfrak{k}}
\newcommand{\fl}{\mathfrak{l}}
\newcommand{\fm}{\mathfrak{m}}
\newcommand{\fn}{\mathfrak{n}}
\newcommand{\fo}{\mathfrak{o}}
\newcommand{\fp}{\mathfrak{p}}
\newcommand{\fq}{\mathfrak{q}}
\newcommand{\fr}{\mathfrak{r}}
\newcommand{\fs}{\mathfrak{s}}
\newcommand{\fz}{\mathfrak{z}}
\newcommand{\fsl}{\mathfrak{sl}}
\newcommand{\fsp}{\mathfrak{sp}}
\newcommand{\so}{\mathfrak{so}}
\renewcommand{\tilde}{\widetilde}
\renewcommand\section{\@startsection {section}{1}{\z@}%
                                   {-3.5ex \@plus -1ex \@minus -.2ex}%
                                   {2.3ex \@plus.2ex}%
                                   {\normalfont\large\bfseries}}
\renewcommand\subsection{\@startsection{subsection}{2}{\z@}%
                                     {-3.25ex\@plus -1ex \@minus -.2ex}%
                                     {0ex \@plus .0ex}%
                                     {\normalfont\normalsize\bfseries}}
\newtheorem{theorem}{Theorem}[section]
\newtheorem{definition}[theorem]{Definition}
\newtheorem{lemma}[theorem]{Lemma}
\newtheorem{corollary}[theorem]{Corollary}
\newtheorem{proposition}[theorem]{Proposition}
\newtheorem*{lemma*}{Lemma}
\theoremstyle{remark}
\newtheorem{remark}[theorem]{Remark}
\newtheorem{example}[theorem]{Example}
\def\@maketitle{\newpage
 \null
 \vskip 2em
 \begin{center}%
  \vskip 3em
  {\Large\bf \@title \par}%
  \vskip 1.5em
  {\normalsize
   \lineskip .5em
   \begin{tabular}[t]{c}\@author
   \end{tabular}\par}%
  \vskip 2em

 \end{center}%
 \par
 \vskip 2.5em}
\renewcommand{\epsilon}{\varepsilon}
\definecolor{light}{gray}{.9}
\begin{document}

\title{Cyclic elements in semisimple Lie algebras}

\author{A.G. Elashvili, V.G. Kac, and E.B. Vinberg }

\maketitle


\section{Introduction}

\label{sec:intro}

Let $\fg$ be a semisimple finite-dimensional Lie algebra over
an algebraically closed field $\FF$ of characteristic 0 and let $e$ be a
non-zero nilpotent element of $\fg$.  By the
Morozov--Jacobson theorem, the element~$e$ can be included in an
$s\ell_2$-triple $\fs=\{ e,h,f \}$, so that $[e,f] =h$, $[h,e]=2e$,
$[h,f]=-2f$.  Then the eigenspace decomposition of $\fg$ with
respect to $\ad h$ is a $\ZZ$-grading of~$\fg$:
\begin{equation}
  \label{eq:0.1}
  \fg = \bigoplus^d_{j=-d} \fg_j \, ,
\end{equation}
where $\fg_{\pm d} \neq 0$.  The positive integer~$d$ is called
the {\em depth} of this $\ZZ$-grading, and of the nilpotent element $e$.
This notion was previously studied e.g. in \cite{P1}.

An element of $\fg$ of the form $e+F$, where $F$ is a non-zero
element of $\fg_{-d}$, is called a {\em cyclic element},
associated with $e$.  In \cite{K1}  Kostant proved that any cyclic
element, associated with a principal (=~regular) nilpotent
element~$e$, is regular semisimple, and in \cite{S} Springer
proved that any cyclic element, associated with a subregular
nilpotent element of a simple exceptional Lie algebra, is regular
semisimple as well, and, moreover, found two more distinguished
nilpotent conjugacy classes in $E_8$ with the same property.
Both Kostant and Springer use this property in order to exhibit
an explicit connection between these nilpotent conjugacy classes
and conjugacy classes of certain regular elements of the Weyl
group of~$\fg$.

A completely different use of cyclic elements was discovered by
Drinfeld and Sokolov \cite{DS}.  They used a cyclic element,
associated with a principal nilpotent element of a simple Lie
algebra~$\fg$, to construct a bi-Hamiltonian hierarchy of integrable
evolution PDE of $KdV$ type (the case $\fg = s\ell_2$ produces
the $KdV$ hierarchy).  In a number of subsequent papers,
\cite{W}, \cite{GHM}, \cite{BGHM}, \cite{FHM},
\cite{DF}, \cite{F},... the method of Drinfeld and Sokolov was
extended to some other nilpotent elements.  Namely, it was
established that one gets a bi-Hamiltonian integrable hierarchy
for any nilpotent element~$e$ of a simple Lie algebra, provided
that there exists a semisimple cyclic element, associated with~$e$.
One of the results of the present paper is a description of
all nilpotent elements with this property in all semisimple Lie 
algebras.

We say that a non-zero nilpotent element~$e$ (and its conjugacy class) is
of {\em nilpotent} (resp. {\em semisimple} or {\em regular
  semisimple}) {\em type} if any cyclic element, associated
with~$e$, is nilpotent (resp. any generic cyclic element,
associated with~$e$, is semisimple or regular semisimple).  If
neither of the above cases occurs, we say that $e$~is of {\em mixed
  type}.

If $\fg=\fg_1 \oplus \fg_2$ and $e=e_1+e_2$, where $e_i\in \fg_i$ are non-zero
nilpotent elements, then either $d_1=d_2$, in which case $e$ is of nilpotent
(resp. semisimple) type iff $e_i$ is such in $\fg_i$, $i=1,2$, or else we
may assume that $d_1>d_2$, in which case $e$ is of nilpotent type
iff $e_1$ is, and $e$ is of mixed type otherwise. This remark reduces the
problem of determination of types of nilpotent elements in semisimple Lie
algebras to the case when $\fg$ is simple.

By a general simple argument we prove that a nilpotent~$e$ of a simple Lie 
algebra $\fg$ is of nilpotent type if and only if its depth~$d$ is odd 
(Theorem \ref{th:1.1}).

For nilpotent elements of even depth the problem can be studied,
using the theory of theta groups \cite{Ka1}, \cite{V}, as follows.

Let $e$ be a nilpotent element of $\fg$ and (\ref{eq:0.1}) the
corresponding $\ZZ$-grading.  One can pick a Cartan subalgebra $\fh
\subset \fg_0$ of~$\fg$ and a set of simple roots $\{ \alpha_1
,\ldots ,\alpha_r \}$, such that the root subspace, attached to a
simple root~$\alpha_i$, lies in $\fg_{s_i}$ with $s_i \geq 0$,
for all $1 \leq i \leq r$.  It is a well-known result of Dynkin
\cite{D} that the only possible values of $s_i$ are $0$, $1$, or
$2$.  This associates to~$e$ a labeling by  $0$, $1$, $2$ of the 
Dynkin diagram of $\fg$, called the characteristic of~$e$, which
uniquely determines~$e$. The element $e$ is even iff all the labels 
are $0$ or $2$. If $-\alpha_0 = \sum^r_{i=1} a_i\alpha_i$ is the 
highest root of~$\fg$, then the depth $d$ of $e$ is determined by 
the formula
\begin{equation}
  \label{eq:0.2}
  d=\sum^r_{i=1} a_is_i \, .
\end{equation}

We associate to $e$ a labeled by $0$, $1$ and $2$ extended Dynkin 
diagram by putting~$s_0=2$ at the extra node and the labels $s_i$ 
of the characteristic of~$e$ at all other nodes. Let $m=d+2$, and 
let $\epsilon$ be a primitive $m$\st{th} root of $1$. Define an 
automorphism $\sigma_e$ of $\fg$ by letting
\begin{equation}
  \label{eq:0.3}
  \sigma_e (e_{\alpha_i}) = \epsilon^{s_i} e_{\alpha_i}\, , \,
  \sigma_e (e_{-\alpha_i}) =\epsilon^{-s_i} e_{-\alpha_i}\, ,
  \quad i=1,\ldots, r, \, ,
\end{equation}
where $e_{\pm \alpha_i}$ are some root vectors, attached to $\pm
\alpha_i$. The order of $\sigma_e$ is $m$ if $e$ is odd, and $m/2$
if $e$ is even.

The automorphism $\sigma_e$ defines a $\ZZ
/m \ZZ$-grading
\begin{equation}
  \label{eq:0.4}
  \fg = \bigoplus_{j \in \ZZ/m\ZZ} \fg^j \, .
\end{equation}
Here $\fg^0=\fg_0$ is a reductive subalgebra of $\fg$, whose
semisimple part has the Dynkin diagram, obtained from the extended
Dynkin diagram of $\fg$ by removing the nodes with non-zero labels.
Note also that the lowest weight vectors of the $\fg^0$-module 
$\fg^1=\fg_1$ are the roots $\alpha_i$ with $s_i=1$ \cite{Ka2}, 
\cite{OV}. If $e$ is even, then the lowest weight vectors of 
the $\fg^0$-module $\fg^2=\fg_2+\fg_{-d}$ are the roots $\alpha_i$ 
with $s_i=2$ (including the lowest root $\alpha_0$).

In the case when $e$ is even, all the labels $s_i$ are even, and
it is convenient to divide them by $2$ (which is being done in
Section 6).

The connected linear algebraic group $G^0|\fg^j$ for each 
$j\in \ZZ/m\ZZ$ is called a theta group.

One of the basic facts of the theory of theta groups is that the
$G^0$-orbit of an element $v\in \fg^j$ is closed (resp. contains
$0$ in its closure) if and only if $v$ is a semisimple
(resp. nilpotent) element of $\fg$ \cite{V}.  Since a cyclic element 
$e+F$ lies in $\fg^2$, this reduces the study of its semisimplicity 
or nilpotence to the corresponding properties of its $G^0$-orbit.

\newpage

In Section \ref{sec:2} we study the {\it rank} $\rk\, e$ of a nilpotent 
element $e$, defined as $\dim\, (\fg^2/G^0)$. It follows from Theorem 
\ref{th:1.1} that $\rk\, e>0$ if and only if the depth of $e$ is even. 
Several other equivalent definitions of the rank of $e$ are given 
by Proposition \ref{prop:2.1}. Furthermore, we prove that if $e$ is 
a nilpotent element of mixed type, then a cyclic element $e+F$ is 
never semisimple (Proposition \ref{prop:2.2}(b)). Thus, the element 
$e$ is of semisimple type if and only if there exists a semisimple 
cyclic element associated with it.

In Section \ref{sec:3} we introduce the notion of a {\it reducing subalgebra}
for a nilpotent element $e$ of even depth in a simple Lie algebra $\fg$.
It is a semisimple subalgebra $\fq$, normalized by $\fs$, such that
$Z(\fs)(\fq \cap \fg_{-d})$ is Zariski dense in $\fg_{-d}$. We have:
$\fs \subset \fn(\fq) =\fq\oplus \fz(\fq)$.

By a case-wise consideration, we find a reducing subalgebra $\fq$ for each
nilpotent element $e$, such that the projection of $e$ to $\fq $ along 
$\fz(\fq)$ 
has regular semisimple type in $\fq$. This leads to a characterization 
of nilpotent elements of semisimple type and to a kind of a Jordan 
decomposition of nilpotent elements. The latter decomposition allows us 
to combine all conjugacy classes of nilpotent elements of $\fg$ in 
{\it bushes}, which consist of nilpotent classes with the same reducing 
subalgebra $\fq$ and the same projection to $\fq$. Each bush contains 
a unique nilpotent class of semisimple type, and all other are of mixed 
type, but have the same depth $d$, rank $r$, and the conjugacy class
of the semisimple part of a generic cyclic element; we denote by $\fa$
the derived subalgebra of the centralizer of this semisimple part.

In Section \ref{sec:4}
we describe explicitly some minimal reducing subalgebras of nilpotent 
elements of even depth in all simple classical Lie algebras. This leads 
to a description of types of all nilpotent elements and their bushes 
in these algebras (Theorem \ref{th:4.3}).

We also find some minimal reducing subalgebras for all nilpotent elements
in the exceptional simple Lie algebras. This allows us to combine
the nilpotent classes in bushes and find their types.
All bushes of nilpotent classes in all exceptional simple Lie algebras, 
along with their invariants $d$, $r$ and $\fa$, and a minimal reducing 
subalgebra, are given in Tables 5.1-5.4 in Section 5, and in Table 1.1 
(the bush of 0).


In Table 0.1 we list the numbers of all conjugacy classes of non-zero
nilpotent elements and those of semisimple, regular semisimple and
nilpotent type in the exceptional simple Lie algebras.

\begin{table}[!h!t]\label{tab:0.1}
  \centering
  Table 0.1\\[2ex]
\begin{tabular}{c|c|c|c|c}
\hline
     \,\,\,\,\!\!\raisebox{-1ex}{$\diagdown \# $} & non-zero & nilpotent & semisimple & regular
    semisimple\\
 $\fg$ \,\,\,\,\,\raisebox{.75ex}{$\diagdown$}\,\,\,    & nilpotent classes & type classes & type classes &
            type classes\\
\hline
$E_6$ & 20 & 2 & 13 & 5\\
$E_7$ & 44 & 3 & 21 & 5\\
$E_8$ & 69 & 7 & 27 & 7 \\
$F_4$ & 15 & 2 & 11 & 4\\
$G_2$ & 4  & 1 & 3 &  2\\
\hline
  \end{tabular}
     \end{table}
In particular, among the distinguished nilpotent conjugacy classes 
we find 7~regular semisimple and 4~mixed type classes in $E_8$,
3~regular semisimple and 3~mixed type classes in $E_7$,
only regular semisimple type classes in $E_6$, $F_4$ and $G_2$
(3, 4 and 2 classes respectively).

In the case when a nilpotent  element $e$ is of regular semisimple type, i.e.
the corresponding generic cyclic element $e+F$ is regular (by a
result of Springer \cite{S} this holds for all distinguished
nilpotents of semisimple type), the centralizer of $e+F$ is a
Cartan subalgebra $\fh'$ of $\fg$, and $\sigma_e$ induces an element
$w_e$ of the Weyl group $W=W(\fh')$ of $\fg$.  We thus get a map
from the set of conjugacy classes of nilpotent elements of $\fg$
of regular semisimple type to the set of conjugacy classes of
regular elements of $W$, extending that of Kostant and Springer
\cite{K1}, \cite{S}.

As an application, we find in Section \ref{sec:6} the diagrams of all 
regular elements in exceptional Weyl groups. Another application is 
a classification of all theta groups which have a closed orbit with 
a finite stabiliser (Proposition \ref{prop:6.11}). In the cases 
$\fg=E_{6,7,8}$ this list was found in \cite{G} as a part of a long 
classification of theta groups of positive rank.

In a similar fashion we also construct a map from the set of
conjugacy classes of all nilpotent elements of semisimple type 
(and even of ``quasi-semisimple type'', see Remark \ref{rem:7.2})
to the set of conjugacy classes of W. 
It would be interesting to compare this map with the one, 
constructed in \cite{KL} and in \cite{L}.

Throughout the paper the base field $\FF$ is algebraically closed of
characteristic zero. Also, we use throughout the following notation:
$(.,.)$ is the Killing form on $\fg$; $G$ is a simply connected connected
algebraic group over $\FF$ with the
Lie algebra $\fg$; $A|V$ denotes an algebraic group $A$ acting linearly 
on a vector space $V$, $V/A$ denotes the categorical quotient (which 
exists if $A$ is reductive), and $\rk (A|V)=\dim\, (V/A)$ is called the 
rank of this linear algebraic group; $Z(\fa)$ (resp. $\fz(\fa)$) denotes 
the centralizer of a subset $\fa$ of $\fg$ in $G$ (resp. $\fg$), 
and $N(\fa)$ (resp. $\fn(\fa)$) denotes the normaliser of a subalgebra $\fa$. 
We denote by 
$\fp'$ the derived subalgebra of a Lie algebra $\fp$.

\section{The case of odd depth}
\label{sec:1}

In this section we prove the following theorem.

\begin{theorem}
  \label{th:1.1}
A nilpotent element $e$ of a simple Lie algebra $\fg$ is of nilpotent type
if and only if its depth $d$ is odd.
\end{theorem}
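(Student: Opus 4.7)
The plan is to tackle the two implications separately, using the grading~(\ref{eq:0.1}) and the automorphism $\sigma_e$ of~(\ref{eq:0.3}).

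For the implication ``$d$ odd $\Rightarrow$ nilpotent type'', let $m=d+2$ and let $\epsilon\in\FF$ be a primitive $m$-th root of unity. The inner automorphism $\Ad(\epsilon^h)$ of $\fg$ acts on $\fg_j$ by the scalar $\epsilon^j$; since $-d\equiv 2\pmod m$, it multiplies both $e\in\fg_2$ and $F\in\fg_{-d}$ by $\epsilon^2$, so $\Ad(\epsilon^h)(e+F)=\epsilon^2(e+F)$. For every $G$-invariant homogeneous polynomial $P$ of degree $n$ on $\fg$, invariance combined with homogeneity gives
$$P(e+F) = P(\Ad(\epsilon^h)(e+F)) = P(\epsilon^2(e+F)) = \epsilon^{2n} P(e+F).$$
Since $m$ is odd and $\gcd(2,m)=1$, the relation $(1-\epsilon^{2n})P(e+F)=0$ forces $P(e+F)=0$ whenever $m\nmid n$, killing a generic family of invariants on the cyclic element. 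For the finitely many remaining degrees $n=d_i$ divisible by $m$, I would exploit the fact that the polynomial function $F\mapsto P(e+F)$ on $\fg_{-d}$ is invariant under the reductive centraliser $Z(\fs)$ of the $\fs$-triple, which preserves $\fg_{-d}$ and fixes~$e$. Applying Hilbert--Mumford to the $Z(\fs)$-action on $\fg_{-d}$, and using that $P(e+0)=P(e)=0$, one then concludes $P(e+F)=0$ for all $F$ provided every $F\in\fg_{-d}$ has $0$ in the Zariski closure of its $Z(\fs)$-orbit. This residual step, reducing to the verification that $\fg_{-d}$ carries no non-constant $Z(\fs)$-invariants in the relevant degrees, is what I expect to be the main technical obstacle; it may be handled by analysing the $Z(\fs)$-module structure of $\fg_{-d}$ case by case.

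For the converse ``$d$ even $\Rightarrow$ not of nilpotent type'', I would exhibit an explicit $F$ making $e+F$ non-nilpotent. For $d=2$ take $F=f$ from the $\fs$-triple, so that $e+f$ is regular semisimple by Kostant. For $d>2$ even, the depth condition $\fg_d\neq 0$ guarantees an $\fs$-irreducible submodule $V\subset\fg$ of highest weight $d$; taking $F$ to be a lowest-weight vector of $V$, the restriction of $\ad(e+F)$ to $V$ mimics a regular semisimple element of $\fsl_2$ acting on its $(d+1)$-dimensional irreducible representation, which produces a nonvanishing $G$-invariant on $e+F$ and certifies the non-nilpotence.
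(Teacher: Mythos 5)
Your reduction in the odd case is correct as far as it goes: the eigenvector relation $\sigma_e(e+F)=\epsilon^2(e+F)$, with $\epsilon^2$ a primitive $m$-th root of unity, does kill every homogeneous $G$-invariant of degree not divisible by $m$. But the residual step you flag is a genuine gap, not a technicality. You need that the reductive group $Z(\fs)$ acting on $\fg_{-d}$ has only constant invariants (equivalently, that $0$ lies in every orbit closure), and that assertion is essentially the theorem itself in this direction: by the slice argument of Proposition \ref{prop:2.1}, $\dim(\fg_{-d}/Z(\fs))=\dim(\fg^2/G^0)$, so what remains to be shown is exactly $\rk (G^0|\fg^2)=0$. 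The cases where it is needed really occur --- e.g. $e=2A_3$ in $E_8$ has $d=7$, $m=9$, and $9$ divides the fundamental degree $18$ --- and ``handle it case by case'' is not a proof. The paper closes this gap conceptually with Vinberg's theorem on theta groups: $\rk (G^0|\fg^{j})$ depends only on $\gcd(j,m)$, so for $m$ odd $\rk (G^0|\fg^2)=\rk (G^0|\fg^1)$, and $\fg^1=\fg_1$ consists of nilpotent elements, forcing this rank to be $0$ and hence every element of $\fg^2$, in particular every cyclic element, to be nilpotent.

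The even case also has a gap. The operator $\ad (e+F)$ does not preserve the $\fs$-submodule $V$ (in general $[F,V]\not\subset V$), so ``the restriction of $\ad(e+F)$ to $V$'' is undefined and the analogy with $e+f$ acting in an irreducible $\fsl_2$-module does not produce an invariant. Moreover, your specific choice of $F$ as a lowest-weight vector of a single irreducible summand can fail: the natural certificate of non-nilpotence is the value $((\ad e)^d F,F)$, and an individual highest-weight-$d$ summand $V$ may be isotropic for the Killing form (for instance when two isotypic copies are paired with each other), in which case this quantity vanishes on your $F$. What the paper proves instead (Lemma \ref{lem:1.3}) is that $x\mapsto ((\ad e)^d x,x)$ is a \emph{non-degenerate} quadratic form on all of $\fg_{-d}$ when $d$ is even, because $(\ad e)^d:\fg_{-d}\to\fg_d$ is an isomorphism onto the Killing-dual space and is symmetric for even $d$; hence $f(u+x)=((\ad u)^d x,x)$ is a non-zero $G^0$-invariant polynomial on $\fg^2$, and a \emph{generic} cyclic element (not a hand-picked one) is not nilpotent. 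Your $d=2$ observation that $e+f$ is a non-zero semisimple element is fine, but it does not generalize along the route you propose.
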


Consider the $\ZZ /m\ZZ$-grading (\ref{eq:0.4}) of $\fg$, corresponding to $e$,
so that any cyclic element $e+F$
(where $F\in\fg_{-d}$) lies in $\fg^2$.

Recall that, for any $\ZZ/m\ZZ$-grading of $\fg$, the dimension of a maximal
abelian subspace of $\fg^j$, consisting of semisimple elements, is called the
rank of the $G^0$-module $\fg^j$. We denote it by $\rk (G^0|\fg^j)$.
It is equal
to the dimension of the categorical quotient $\fg^j/G^0$.
If $\gcd (j_1,m)=\gcd (j_2,m)$, then $\rk (G^0|\fg^{j_1})=\rk (G^0|\fg^{j_2})$
[V].

\begin{lemma}
  \label{lem:1.1}
For the $\ZZ/m\ZZ$-grading of $\fg$ defined above, all elements of $\fg^1$ are nilpotent.
\end{lemma}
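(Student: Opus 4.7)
My strategy is to identify $\fg^1$ directly with a single component of the original $\ZZ$-grading (\ref{eq:0.1}), and then invoke the classical fact that a nonzero graded piece of a bounded $\ZZ$-grading on a semisimple Lie algebra consists of nilpotent elements.

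The first step is to compare the two gradings. By (\ref{eq:0.3}), a root vector $e_\alpha$ with $\alpha=\sum_i n_i\alpha_i$ is an eigenvector of $\sigma_e$ with eigenvalue $\epsilon^{\sum_i n_i s_i}$; since $\alpha_i(h)=s_i$ by the very definition of the characteristic, the exponent $\sum_i n_i s_i$ is precisely the $\ad h$-weight of $e_\alpha$. Using also that $\sigma_e$ fixes the Cartan $\fh\subset\fg_0$, I would conclude
\[
\fg^j \;=\; \bigoplus_{k\,\equiv\, j\,(\mathrm{mod}\,m)} \fg_k \qquad (j\in\ZZ/m\ZZ).
\]

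The second step is the arithmetic observation that the $\ad h$-grading is supported on $\{-d,-d+1,\dots,d\}$, while $m=d+2>d$. Thus the only integer $k$ in that range satisfying $k\equiv 1\pmod m$ is $k=1$: the nearby candidates $1\pm m = -d-1,\,d+3$ lie outside the support. Hence $\fg^1=\fg_1$. (If $e$ is even then all $s_i$ are even, so $\fg_1=0$ and the lemma is trivially true; otherwise the claim reduces to showing that $\fg_1$ consists of nilpotent elements.)

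The third step is immediate: for any $x\in\fg_1$ one has $(\ad x)^N\fg_k\subset\fg_{k+N}$, which vanishes once $N>2d$. Thus $\ad x$ is nilpotent, and by the abstract Jordan decomposition in the semisimple Lie algebra $\fg$, $x$ itself is nilpotent. I expect no real obstacle in carrying this out; the only genuinely non-trivial input is the inequality $m>d$, which is baked into the choice $m=d+2$ and is precisely what prevents distinct $\ad h$-weights from collapsing into $\fg^1$.
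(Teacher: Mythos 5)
Your proof is correct and follows essentially the same route as the paper, which simply notes that $\fg^1=\fg_1$ under this $\ZZ/m\ZZ$-grading and concludes; you have merely filled in the (routine) verification that $m=d+2>d+1$ forces $\fg^1=\fg_1$, and that elements of $\fg_1$ are ad-nilpotent.
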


\begin{proof}
Under our definition of the $\ZZ/m\ZZ$-grading of $\fg$, one has $\fg^0=\fg_0$
and $\fg^1=\fg_1$, and hence all elements of $\fg^1$ are nilpotent.
\end{proof}

The "if" part of Theorem~\ref{th:1.1} follows immediately from this lemma and
the preceding remark.

\begin{lemma}
  \label{lem:1.3}
Let $e\in\fg$ be a nilpotent element of even depth $d$. Then
\alphaparenlist
  \begin{enumerate}
\item $((\ad e)^d x,x)$ is a $Z(\fs)$-invariant non-degenerate quadratic
form on $\fg_{-d}$.

\item The $G^0$-invariant polynomial $f(u+x)=((\ad u)^d x,x)$
on $\fg^2=\fg_2\oplus\fg_{-d}$ is non-zero.
 \end{enumerate}
\end{lemma}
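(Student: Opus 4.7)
The plan is to handle (a) directly from $\fsl_2$-theory plus invariance of the Killing form, and then deduce (b) by specializing $u=e$ in the polynomial.

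For (a), the first key ingredient is the classical $\fsl_2$-fact that for the triple $\fs=\{e,h,f\}$, the operator $(\ad e)^d:\fg_{-d}\to\fg_d$ is a bijection. Indeed, $\fg$ decomposes as an $\fs$-module into irreducible summands of highest weight $\le d$ (the highest weight being exactly $d$ on each summand meeting $\fg_d$), and on each irreducible summand $e^d$ sends the $(-d)$-weight space isomorphically onto the $d$-weight space. Since the Killing form provides a non-degenerate pairing between $\fg_{-d}$ and $\fg_d$ (opposite $\ad h$-eigenspaces), the composition
\[
B(x,y):=((\ad e)^d x,y),\qquad x,y\in\fg_{-d},
\]
is a non-degenerate bilinear form. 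To see it is \emph{symmetric}, use that $\ad e$ is skew with respect to the Killing form (i.e. $(\ad e\cdot b,c)=-(b,\ad e\cdot c)$), whence $((\ad e)^d x,y)=(-1)^d(x,(\ad e)^d y)=(x,(\ad e)^d y)$ because $d$ is even. Finally, for $Z(\fs)$-invariance, note that any $g\in Z(\fs)$ commutes with $\ad e$ and preserves the Killing form, so it preserves $B$. Thus $B$ is the desired $Z(\fs)$-invariant non-degenerate quadratic form on $\fg_{-d}$.

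For (b), first observe that $f$ is indeed a polynomial function on $\fg^2=\fg_2\oplus\fg_{-d}$: for $u\in\fg_2$ we have $(\ad u)^d x\in\fg_{2d}\cdot\fg_{-d}\subset\fg_d$, which pairs with $x\in\fg_{-d}$ via the Killing form. Its $G^0$-invariance is immediate from the fact that $G^0$ preserves the $\ZZ$-grading and the Killing form. To see $f\not\equiv0$, just specialize $u=e$: by (a), $f(e+x)=B(x,x)$ is the non-degenerate quadratic form, hence not identically zero in $x$.

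The only nontrivial step is the bijectivity of $(\ad e)^d:\fg_{-d}\to\fg_d$, but this is a standard consequence of $\fsl_2$-representation theory together with the definition of $d$ as the depth (so $\fg_d\neq 0$ and no summand of $\fg$ as an $\fs$-module has highest weight $>d$). Everything else is just invariance of the Killing form and parity of $d$.
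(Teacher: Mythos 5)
Your proposal is correct and follows essentially the same route as the paper's proof: the duality of $\fg_{-d}$ and $\fg_d$ under the Killing form, the bijectivity of $(\ad e)^d:\fg_{-d}\to\fg_d$, symmetry from the evenness of $d$, and the specialization $u=e$ to get (b). You merely spell out the standard $\fsl_2$-theoretic and invariance details that the paper leaves implicit.
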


\begin{proof}
The subspaces $\fg_{-d}$ and $\fg_{d}$ are dual with respect to the
Killing form, and the operator $(ade)^d$ induces an isomorphism from
$\fg_{-d}$ to $\fg_{d}$. If $d$ is even, this operator is symmetric,
whence (a) follows. Clearly, this implies (b).
\end{proof}

The "only if" part of Theorem \ref{th:1.1} follows Lemma \ref{lem:1.3}(b),
since $e+F$ is not nilpotent if $f(e+F)\neq 0$.

Nilpotent elements of nilpotent type in classical Lie algebras are described
in Section \ref{sec:4}. We list in Table 1.1 below all conjugacy classes of
nilpotent elements $e$
of nilpotent type in exceptional Lie algebras $\fg$. In the last column we
give the type of a generic (nilpotent) cyclic element $e+F$,
associated with~$e$.

\begin{table}[!h]
  \centering
Table 1.1. Nilpotent orbits of nilpotent type in exceptional Lie algebras
\\[1ex]
 \label{tab:1.1}
  \begin{tabular}{c|c|c|c}
    \hline
 $\fg$&   $e$  & $d$ & $e+F$\\
\hline  
    $E_6$       & $3A_1$       & $3$     & $D_4$\\
    $E_6$       & $2A_2 +A_1$   & $5$     & $E_6$\\
    $E_7$       & $[3A_1]' $    &  $3$    & $D_4$\\
    $E_7$       & $4A_1$       &$3$      & $D_4 + A_1$\\
    $E_7$       & $2A_2 + A_1$  & $5$      & $E_6$\\
    $E_8$       &  $3A_1$      &$3$      & $D_4$\\
    $E_8$    & $ 4A_1$             & $3$     & $D_4+A_1$\\
    $E_8$    & $2A_2 + A_1$          & $5$     & $E_6$\\
    $E_8$    & $2A_2 + 2A_1$         & $5$    & $E_6 +A_1$\\
    $E_8$    & $2A_3$               & $7$    & $D_7$\\
    $E_8$    & $A_4 + A_3$           & $9$    & $E_8$\\
    $E_8$    &  $ A_7$                & $15$     & $E_8$\\
     $F_4 $   & $A_1 + \tilde{A}_1$   & $3$      & $ C_3$\\
    $F_4$     & $\tilde{A}_2 + A_1$    & $5$     & $F_4$\\
     $G_2$    & $\tilde{A}_1$          & $3$      & $G_2$\\
\hline
  \end{tabular}
\end{table}

\begin{remark}
  \label{rem:1.2}

Let $\fg=\bigoplus^{d'}_{j=-d'}\fg'_j$ be a $\ZZ$-grading of $\fg$ of depth
$d'$, such that $e\in \fg'_2$. The same proof as that of Theorem~\ref{th:1.1}
shows that $e$ is of nilpotent type for this grading if $d'$ is odd, and that
$e$ is not of nilpotent type for this grading if $d'=d$ and $d$ is even. (The
only difference is that the quadratic form from Lemma 1.3(a) may be degenerate
in this case but it is still non-zero, since $\fg_d\subset\fg'_d$ if $d'=d$.)
Note that from the classification of good $\ZZ$-gradings, given in [EK],
it is easy to see that the depth of all good gradings for $e$ is the same as
for the Dynkin grading (0.1) for all $\fg$, except for $\fg=C_n$, when
the depth may increase by $1$ or $2$ for $e$, corresponding to the partitions
with all parts even of multiplicity 2, and by $1$ for $e$, corresponding to
all other partitions with maximal part even of multiplicity $2$.
\end{remark}

\vspace*{1ex}
\section{The rank of a nilpotent element of even depth}
 \label{sec:2}

In this section $e$ is a non-zero nilpotent element of even depth $d$ in
a semisimple Lie algebra $\fg$ and $\fs=\{e,h,f\}$ is an $\fsl_2$-triple
containing $e$. Let (\ref{eq:0.1}) be the corresponding $\ZZ$-grading of
$\fg$, and let $\fg =\oplus_{j\in\ZZ/m\ZZ}\fg^j$ be the $\ZZ/m\ZZ$-grading
(\ref{eq:0.4}),
defined by the characteristic of~$e$ as in the Introduction.  In particular,
$\fg^0=\fg_0$ and $\fg^2=\fg_2+\fg_{-d}$.

Recall that an action of a reductive algebraic group is called stable
if its generic orbits are closed. In this case, the codimension of a generic
orbit is equal to the dimension of the categorical quotient. Clearly, $e$ is
of semisimple type if and only if the action $G^0|\fg^2$ is stable.

It is known that any orthogonal representation of a reductive algebraic
group is stable [Lun2]. By Lemma 1.3(a) the representation $Z(\fs)|\fg_{-d}$
is orthogonal and hence stable.

\begin{proposition}\label{prop:2.1}
The following numbers are equal:
\begin{list} {}{}
\item 1) $\dim(\fg^2/G^0)(=\rk (G^0|\fg^2))$;
\item 2) $\dim(\fg_{-d}/Z(\fs))$;
\item 3) the codimension of a generic orbit of the action $G^0|\fg^2$;
\item 4) the codimension of a generic orbit of the action $Z(\fs)|\fg_{-d}$.
\end{list}
\end{proposition}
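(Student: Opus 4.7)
Equalities (1)=(3) and (2)=(4) are standard facts about reductive actions on affine varieties: if a reductive algebraic group $H$ acts on an affine $V$, then $\dim(V/H)$ equals the codimension of a generic orbit. The group $G^0$ is reductive as the identity component of $Z_G(h)$, and $Z(\fs)$ is reductive as the centralizer of an $\sl_2$-triple, so both hypotheses hold. The substantive task is therefore (3)=(4): the generic orbits of $G^0|\fg^2$ and of $Z(\fs)|\fg_{-d}$ have equal codimension.

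The tool is the map
\[
\phi\colon G^0\times\fg_{-d}\longrightarrow \fg^2,\qquad (g,F)\mapsto g\cdot(e+F),
\]
used together with two preparatory observations. (a) The subspaces $\fg_2$ and $\fg_{-d}$ are individually $G^0$-stable summands of $\fg^2$: since $\fg^0=\fg_0=Z_\fg(h)$, the element $h$ lies in the center of $\fg^0$, so $\mathrm{Ad}(g)h=h$ for all $g\in G^0$, and Schur's lemma forces each $G^0$-irreducible summand of $\fg^2$ to lie in a single $\ad h$-eigenspace. (b) $G^0\cdot e$ is Zariski open in $\fg_2$: by $\sl_2$-theory $\ad e\colon\fg_0\to\fg_2$ is surjective, so the tangent space to the orbit at $e$ equals $\fg_2$. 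A standard weight argument shows $\Lie\, Z_{G^0}(e)=\fz(\fs)$ (a weight-zero vector killed by $\ad e$ is also killed by $\ad f$), whence $\dim G^0-\dim Z(\fs)=\dim\fg_2$.

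For any $F\in\fg_{-d}$, an element $x\in\fg^0$ satisfies $[x,e+F]=0$ iff $[x,e]=0$ and $[x,F]=0$ (the two summands lie in distinct $\ad h$-eigenspaces and must vanish separately), i.e.\ iff $x\in\fz(\fs)$ centralizes $F$. Hence the stabilizer of $e+F$ in $G^0$ and the stabilizer of $F$ in $Z(\fs)$ have the same dimension; combining with (b),
\[
\mathrm{codim}_{\fg^2}\bigl(G^0\cdot(e+F)\bigr)\;=\;\dim\fg_{-d}-\dim Z(\fs)\cdot F\;=\;\mathrm{codim}_{\fg_{-d}}\bigl(Z(\fs)\cdot F\bigr).
\]
For generic $F$ the right side equals (4); it equals (3) provided $e+F$ lies in a generic $G^0$-orbit of $\fg^2$, which is ensured by dominance of $\phi$: given $v=v_2+v_{-d}\in\fg^2$ with $v_2$ in the dense open orbit $G^0\cdot e\subset\fg_2$, write $v_2=g\cdot e$ to get $v=g\cdot(e+g^{-1}v_{-d})\in\mathrm{im}\,\phi$. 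Lower semicontinuity of orbit dimension now yields (3)=(4).

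The main obstacle is really bookkeeping at this last step: making sure that a generic $F\in\fg_{-d}$ produces a generic $v=e+F\in\fg^2$, rather than merely some orbit whose codimension happens to match (4). Observation (a) is indispensable here, since without $G^0$-stability of the summands $\fg_2$ and $\fg_{-d}$ the separation of $[x,e+F]$ into independent components in the stabilizer calculation would collapse, and one would obtain only an inequality between (3) and (4).
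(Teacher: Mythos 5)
Your argument for the equality (3)$=$(4) is correct and is essentially the paper's own (the openness of $G^0e$ in $\fg_2$, the identification $\fz_{\fg_0}(e)=\fz(\fs)$, and the resulting matching of stabilizers of $e+F$ in $G^0$ with stabilizers of $F$ in $Z(\fs)$); you have merely written out the genericity bookkeeping that the paper leaves implicit. The gap is in your very first sentence. It is \emph{not} a standard fact that for a reductive group $H$ acting on an affine $V$ one has $\dim(V/H)=\mathrm{codim}$ of a generic orbit: the one-dimensional torus acting on $\FF^2$ by scalars is reductive, has only constant invariants (so $\dim(V/H)=0$), yet its generic orbits have codimension $1$. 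In general one only gets $\dim(V/H)\le\mathrm{codim}(\text{generic orbit})$, with equality precisely when the generic fiber of $V\to V/H$ contains a dense orbit. So the equalities (1)$=$(3) and (2)$=$(4), which you dismiss as formal, are exactly where the substantive inputs of the proof enter.

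Concretely: for (2)$=$(4) one uses that the representation $Z(\fs)|\fg_{-d}$ is orthogonal (Lemma 1.3(a)) and that orthogonal representations of reductive groups are stable (Luna), so the generic fiber of $\fg_{-d}\to\fg_{-d}/Z(\fs)$ is a single closed orbit. For (1)$=$(3) stability is \emph{not} available --- the action $G^0|\fg^2$ is stable precisely when $e$ is of semisimple type, which is what the whole paper is trying to detect --- and instead one must invoke Vinberg's theorem that for a theta-group the fibers of the quotient morphism $\fg^2\to\fg^2/G^0$ consist of finitely many orbits, whence the generic fiber still has the dimension of a generic orbit. Without these two inputs your proof establishes only (3)$=$(4) and the inequalities $\dim(\fg^2/G^0)\le$ (3) and $\dim(\fg_{-d}/Z(\fs))\le$ (4).
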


We shall call each of these equal positive numbers the {\it rank}
of $e$ in $\fg$
and denote it by $\rk \,e$ or, more precisely, $\rk_{\fg} \,e$.

\begin{proof}
Since the orbit $G^0 e$ is open in $\fg^2$, the codimensions of generic
orbits for the actions $G^0|\fg^2$ and $Z(\fs)|\fg_{-d}$ coincide. But
the former is equal to $\dim\,(\fg^2/G^0)$, since the fibers of the
factorization morphism $\fg^2\to\fg^2/G^0$ consist of finitely many
orbits [V], while the latter is equal to $\dim\,(\fg_{-d}/Z(\fs))$, since
the action $Z(\fs)|\fg_{-d}$ is stable.
\end{proof}

The rank of a principal nilpotent element of a simple Lie algebra is equal
to $1$. If $\fg=\fg_1\oplus\fg_2$ and $e_1\in\fg_1,\,e_2\in\fg_2$ are
nilpotent elements of the same even depth, then $\rk$$e=\rk$$e_1+\rk$$e_2$.

Recall that for a reductive algebraic linear group $H|V$ and a semisimple
element $v\in V$ (i.e. such that its orbit is closed) a slice $S_v$ at $v$
is a plane of the form $v+N_v$, where $N_v\subset V$ is an $H_v$-invariant
complementary subspace of the tangent space of the orbit $Hv$ at $v$.
Recall that the linear group $H|V$ is stable if and only if the group
$H_v|N_v$ is stable \cite{Lun}.

\begin{proposition}
\label{prop:2.2}
\alphaparenlist
  \begin{enumerate}
\item If a cyclic element $e+F$ is semisimple, then $F$ is semisimple
for the action $Z(\fs)|\fg_{-d}$.

\item If there exist semisimple cyclic elements associated with $e$,
then a generic cyclic element is semisimple.
  \end{enumerate}
\end{proposition}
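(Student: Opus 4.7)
The overall strategy is to use Vinberg's theta-group criterion (recalled just before Proposition~\ref{prop:2.1}): for $v\in\fg^j$, the $G^0$-orbit $G^0\cdot v$ is closed (resp.\ contains $0$ in its closure) if and only if $v$ is semisimple (resp.\ nilpotent) in $\fg$. The plan is to reduce both parts to orbit-closure questions for $G^0|\fg^2$ and then translate them across the slice $e+\fg_{-d}\subset\fg^2$ to the smaller action $Z(\fs)|\fg_{-d}$.

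For (a), assume $e+F$ is semisimple; Vinberg's criterion makes $G^0\cdot(e+F)$ closed in $\fg^2$. I compute
\[
G^0\cdot(e+F)\cap(e+\fg_{-d})=e+Z(\fs)\cdot F.
\]
This uses two ingredients: (i) $G^0$ preserves the $\ZZ$-grading~(\ref{eq:0.1}), so for $g\in G^0$ the equality $g\cdot(e+F)=e+F'$ with $F'\in\fg_{-d}$ forces $g\cdot e=e$ by matching $\fg_2$-components, hence $g\in Z_{G^0}(e)$; and (ii) by $\fsl_2$-theory applied to $\fs\subset\fg_0$ one has $\fz_{\fg_0}(e)=\fz(\fs)$, so $Z_{G^0}(e)=Z(\fs)$. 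Since $e+\fg_{-d}$ is a closed affine subvariety of $\fg^2$, its intersection with the closed orbit $G^0\cdot(e+F)$ is closed, hence $Z(\fs)\cdot F$ is closed in $\fg_{-d}$, i.e., $F$ is semisimple for the $Z(\fs)$-action.

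For (b), let $v_0:=e+F_0$ be a semisimple cyclic element; by (a) the orbit $Z(\fs)\cdot F_0$ is closed, so the stabilizer $L:=Z_{G^0}(v_0)=Z_{Z(\fs)}(F_0)$ is reductive. I would apply Luna's \'etale slice theorem at $v_0$: for any $L$-invariant complement $N$ of $T_{v_0}(G^0\cdot v_0)$ in $\fg^2$, the map $G^0\times^LN\to\fg^2$ is \'etale onto a $G^0$-saturated open neighborhood of $G^0\cdot v_0$, and $G^0$-orbits in this neighborhood are closed iff the corresponding $L$-orbits in $N$ are. The crucial observation is that $N$ can be chosen entirely inside $\fg_{-d}$: take any $L$-invariant complement $N'$ of $[\fz(\fs),F_0]=T_{F_0}(Z(\fs)\cdot F_0)$ in $\fg_{-d}$ and set $N=\{0\}\oplus N'\subset\fg_2\oplus\fg_{-d}$. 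Complementarity of $N$ to the image $T_{v_0}(G^0\cdot v_0)=\{([\xi,e],[\xi,F_0]):\xi\in\fg_0\}$ is a short check using $[\fg_0,e]=\fg_2$ and the freedom to modify $\xi$ by $\fz(\fs)$ so as to place the residue $b-[\xi,F_0]$ into $N'$ for any prescribed $(a,b)\in\fg^2$.

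Under this choice, the slice representation $L|N$ coincides with $L|N'$, which is exactly the Luna slice of the action $Z(\fs)|\fg_{-d}$ at the semisimple point $F_0$. By Lemma~\ref{lem:1.3}(a) this latter action is orthogonal, so standard slice theory for orthogonal representations makes $L|N'$ orthogonal as well, hence stable by the Luna theorem invoked just before Proposition~\ref{prop:2.1}. Therefore generic $L$-orbits in $N$ are closed, and via the Luna correspondence the generic $G^0$-orbit in an open neighborhood of $G^0\cdot v_0$ in $\fg^2$ is closed, hence semisimple. Since $G^0\cdot(e+\fg_{-d})$ is a dense open subset of $\fg^2$ (because $G^0\cdot e$ is open in $\fg_2$), a generic cyclic element is semisimple. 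The main obstacle is the slice identification $L|N\cong L|N'$, which is precisely what lets orthogonality, and hence stability, propagate from the smaller action $Z(\fs)|\fg_{-d}$ to the cyclic-element setting in $\fg^2$.
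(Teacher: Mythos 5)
Your proposal is correct and follows essentially the same route as the paper: part (a) via the identity $G^0(e+F)\cap(e+\fg_{-d})=e+Z(\fs)F$, and part (b) by taking a Luna slice at $e+F_0$ of the form $e+(\text{slice at }F_0\text{ in }\fg_{-d})$, so that the slice representation of $G^0|\fg^2$ is identified with that of $Z(\fs)|\fg_{-d}$. The only cosmetic difference is that you deduce stability of the slice representation from orthogonality of the induced form on the slice, whereas the paper transfers stability directly from $Z(\fs)|\fg_{-d}$ via Luna's criterion that a representation is stable iff its slice representation is; both work.
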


\begin{proof}
\alphaparenlist
  \begin{enumerate}
\item If $e+F$ is semisimple, then the orbit $G^0(e+F)$ is closed. But
$$
e+Z(\fs)F=G^0(e+F)\cap(e+\fg_{-d}),
$$
whence $Z(\fs)F$ is closed.

\item  Let a cyclic element $e+F$ be semisimple. Then, by (a), the orbit
$Z(\fs)F$ is closed. Let $R$ be the stabilizer of $F$ in $Z(\fs)$, and let $S$
be a slice at $F$ for the action $Z(\fs)|\fg_{-d}$. (One may assume that $S$ is
a subspace of $\fg_{-d}$ containing $F$.) Then $R$ is the stabilizer of $e+F$ in $G^0$, and $e+S$ is a slice at $e+F$ for the action $G^0|\fg^2$. Since the action
$Z(\fs)|\fg_{-d}$ is stable, the action $R|S$ is also stable, and hence the
action $G^0|\fg^2$ is stable as well, so a generic cyclic element is semisimple. \end{enumerate}
\end{proof}

\begin{corollary}
\label{cor:2.3}
If $e$ is a nilpotent element, for which there exist a semisimple
cyclic element $e+F$, then $e$ is of semisimple type.
\end{corollary}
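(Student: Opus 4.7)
The plan is to deduce Corollary \ref{cor:2.3} almost immediately from Proposition \ref{prop:2.2}(b) together with the definition of semisimple type, after a short preliminary observation that the depth $d$ of $e$ must be even.

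First I would verify that $d$ is even under the hypothesis. Since $F \in \fg_{-d}$ is nonzero and $e \in \fg_2$ is nonzero, the cyclic element $e+F$ is nonzero. If $d$ were odd, then by Theorem \ref{th:1.1} the element $e$ would be of nilpotent type, so every cyclic element associated to $e$ would be nilpotent; in particular $e+F$ would be a nonzero nilpotent element, contradicting its assumed semisimplicity. Hence $d$ is even, placing us in the setting of Section \ref{sec:2}.

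Next, with $d$ even, Proposition \ref{prop:2.2}(b) applies directly: the existence of a semisimple cyclic element $e+F$ associated with $e$ guarantees that a generic cyclic element associated with $e$ is semisimple. By the definition of semisimple type given in the Introduction, this is precisely the statement that $e$ is of semisimple type.

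I do not expect any real obstacle; the corollary is essentially a restatement of Proposition \ref{prop:2.2}(b), and the only nontrivial ingredient is already encapsulated in that proposition (the use of Luna's slice theorem and the stability of the orthogonal representation $Z(\fs)|\fg_{-d}$ from Lemma \ref{lem:1.3}(a)). The only thing that requires a sentence of its own is ruling out odd depth, which is handled by Theorem \ref{th:1.1} as above.
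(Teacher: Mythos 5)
Your proof is correct and matches the paper's (implicit) argument: the corollary is stated without proof precisely because it is Proposition \ref{prop:2.2}(b) combined with the definition of semisimple type, which is exactly what you observe. Your preliminary check that the depth must be even (so that the Section \ref{sec:2} setting applies) is a reasonable extra sentence that the paper leaves to the standing assumptions of that section.
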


\section{Reducing subalgebras}
\label{sec:3}

We retain the assumptions and notation of the previous section.

\begin{definition}\label{def:3.1}
A semisimple subalgebra $\fq\subset\fg$ is called a reducing subalgebra
for $e$, if it is normalized by $\fs$ and if
$\overline{Z(\fs)\fq_{-d}}=\fg_{-d}$
(i.e. a generic orbit of the action $Z(\fs)|\fg_{-d}$
intersects $\fq_{-d}$). We will denote by $Q$ the connected simply connected
algebraic group with Lie algebra $\fq$.
\end{definition}

\begin{remark}
\label{rem:3.2}
If $\fq$ is a reductive subalgebra normalized by $\fs$ satisfying the condition
$\overline{Z(\fs)\fq_{-d}}=\fg_{-d}$, then its semisimple part $\fq'=[\fq,\fq]$
is a reducing subalgebra for $e$. Indeed, it is obviously normalized by $\fs$,
while $\fq'_{-d}=\fq_{-d}$.
\end{remark}

If $\fq\subset\fg$ is a semisimple subalgebra normalized by $\fs$, then
$$
\fs\subset\fn(\fq)=\fq\oplus\fz(\fq).
$$
We denote by $e_\fq$ (resp. $\fs_\fq$) the projection of $e$ (resp. $\fs$)
to $\fq$ along $\fz(\fq)$. Clearly, $\fs_\fq$ is an $\fsl_2$-triple containing $e_\fq$. The following theorem is a convenient criterion for $\fq$ to be
reducing for $e$.

\begin{theorem}
  \label{th:3.3}
A semisimple subalgebra $\fq\subset\fg$ normalized by $\fs$ is reducing for $e$
if and only if $e_\fq$ has the same depth and rank in $\fq$ as $e$ in $\fg$.
\end{theorem}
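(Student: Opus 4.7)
Write $d'$ and $r'$ for the depth and rank of $e_\fq$ in $\fq$. Since $\fq$ is $\fs$-stable, $\fq=\bigoplus_j \fq_j$ with $\fq_j\subset\fg_j$, and $\fz_Q(\fs_\fq)=\fz(\fs)\cap\fq$ (because $\fs$ and $\fs_\fq$ differ by an element of $\fz(\fq)$, which acts trivially on $\fq$). Thus Proposition~\ref{prop:2.1} applies inside $\fq$ as well. First I dispose of depth: if $d'<d$ then $\fq_{-d}=0$, so $\overline{Z(\fs)\fq_{-d}}=0\neq\fg_{-d}$ and $\fq$ is not reducing. So $d'=d$ is necessary and I assume it throughout.

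Next I reformulate the reducing condition. The map $\mu:Z(\fs)\times\fq_{-d}\to\fg_{-d}$, $(g,F)\mapsto gF$, has image $Z(\fs)\fq_{-d}$, so $\fq$ is reducing iff $\mu$ is dominant. Its differential at $(1,F_0)$ sends $(\xi,Y)\mapsto[\xi,F_0]+Y$, hence reducing is equivalent to
\[
[\fz(\fs),F_0]+\fq_{-d}=\fg_{-d}\quad\text{for generic }F_0\in\fq_{-d}.
\]
By Proposition~\ref{prop:2.1} applied to $\fq$, for generic $F_0\in\fq_{-d}$ one has $\dim[\fz_Q(\fs_\fq),F_0]=\dim\fq_{-d}-r'$, while in general $\dim[\fz(\fs),F_0]\le \dim\fg_{-d}-r$, with equality iff $F_0$ sits in the $Z(\fs)$-generic stratum of $\fg_{-d}$. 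Since $[\fz_Q(\fs_\fq),F_0]\subset[\fz(\fs),F_0]\cap\fq_{-d}$, the standard formula $\dim(A+B)=\dim A+\dim B-\dim(A\cap B)$ gives the key estimate
\[
\dim\bigl([\fz(\fs),F_0]+\fq_{-d}\bigr)\le \dim\fg_{-d}+(r'-r). \tag{$\ast$}
\]

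For the ``only if'' direction, assume $\fq$ is reducing. Then the left side of $(\ast)$ equals $\dim\fg_{-d}$, forcing $r'\ge r$. Moreover, density of $Z(\fs)\fq_{-d}$ implies that generic $F_0\in\fq_{-d}$ is itself $Z(\fs)$-generic in $\fg_{-d}$, so $\dim[\fz(\fs),F_0]=\dim\fg_{-d}-r$. Plugging this back in, every inequality used in $(\ast)$ must be an equality, and we conclude $r'=r$.

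For the ``if'' direction I would pass to the induced quotient map $\pi:\fq_{-d}/Z_Q(\fs_\fq)\to\fg_{-d}/Z(\fs)$ between irreducible affine varieties of dimensions $r'$ and $r$; reducing is equivalent to dominance of $\pi$. Assuming $d'=d$ and $r'=r$, dominance would follow from $\pi$ being generically finite. The main obstacle is proving this finiteness: a priori, $\fq_{-d}$ could lie entirely in a non-generic $Z(\fs)$-orbit-type stratum of $\fg_{-d}$, which would make all orbits in $Z(\fs)F_0\cap\fq_{-d}$ collapse to one of lower dimension than expected. The plan is to exploit the stable orthogonal structure: Lemma~\ref{lem:1.3} gives a non-degenerate $Z(\fs)$-invariant quadratic form on $\fg_{-d}$ whose restriction to $\fq_{-d}$ is a (non-zero multiple on each simple component of) the analogous form for $e_\fq$ in $\fq$, so it is non-degenerate on $\fq_{-d}$. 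Both actions $Z(\fs)\mid \fg_{-d}$ and $Z_Q(\fs_\fq)\mid\fq_{-d}$ are therefore stable orthogonal representations, and Luna's slice theorem at a closed $Z_Q(\fs_\fq)$-orbit $Z_Q(\fs_\fq)F_0$ produces a slice $F_0+N_\fq$ in $\fq_{-d}$ whose normal is $r'$-dimensional; the rank equality $r'=r$ forces this slice to also be a slice for the $Z(\fs)$-action at $F_0$ (by a dimension match on the normals), so the $Z(\fs)$-orbit through $F_0$ is generic, finitely many $Z_Q(\fs_\fq)$-orbits of maximal dimension account for $Z(\fs)F_0\cap\fq_{-d}$, and $\pi$ is generically finite, hence dominant.
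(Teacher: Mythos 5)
Your setup is sound and follows the same route as the paper: the depth reduction, the identification $\fz_\fq(\fs_\fq)=\fz(\fs)\cap\fq$, and the reformulation of the reducing condition as $[\fz(\fs),F_0]+\fq_{-d}=\fg_{-d}$ for generic $F_0\in\fq_{-d}$ are all correct. The decisive gap is that you only ever use the one\mbox{-}sided inclusion $[\fz_\fq(\fs_\fq),F_0]\subset[\fz(\fs),F_0]\cap\fq_{-d}$. In the ``only if'' direction this yields $r'\ge r$ but not $r'=r$: after you substitute $\dim[\fz(\fs),F_0]=\dim\fg_{-d}-r$ into the computation behind $(\ast)$, you obtain $\dim\bigl([\fz(\fs),F_0]\cap\fq_{-d}\bigr)=\dim\fq_{-d}-r$, and comparing with the inclusion gives $\dim\fq_{-d}-r\ge\dim\fq_{-d}-r'$, i.e.\ $r'\ge r$ a second time. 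The claim that ``every inequality used in $(\ast)$ must be an equality'' is circular: the chain terminates only in $\dim\fg_{-d}\le\dim\fg_{-d}+(r'-r)$, which forces equality of the intermediate steps only if one already knows $r'\le r$. What is missing is the reverse inclusion, and this is exactly what the paper's proof supplies: choose a $(\fq+\fs)$-invariant complement $\fm$ of $\fq$ in $\fg$ (possible since $\fq+\fs\subset\fq\oplus\fz(\fq)$ is reductive). Then $\fz(\fs)=\fz_\fq(\fs_\fq)\oplus\fz_\fm(\fs)$ and, because $[\fm,\fq]\subset\fm$, for $F_0\in\fq_{-d}$ one has $[\fz(\fs),F_0]=[\fz_\fq(\fs_\fq),F_0]\oplus[\fz_\fm(\fs),F_0]$ with the summands lying in $\fq_{-d}$ and $\fm_{-d}$ respectively. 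This upgrades your estimate to the exact identity
\[
\mathrm{codim}_{\fg_{-d}}[\fz(\fs),F_0]\;=\;\mathrm{codim}_{\fq_{-d}}[\fz_\fq(\fs_\fq),F_0]\;+\;\mathrm{codim}_{\fm_{-d}}[\fz_\fm(\fs),F_0],
\]
with the reducing condition equivalent to the vanishing of the last term, and the ``only if'' direction then closes immediately.

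In the ``if'' direction you correctly isolate the real obstacle (a generic $F_0\in\fq_{-d}$ must have a $Z(\fs)$-orbit of maximal dimension in $\fg_{-d}$), but the proposed resolution does not work as stated: for $F_0+N_\fq$ to be a slice for the $Z(\fs)$-action one needs the transversality $[\fz(\fs),F_0]+N_\fq=\fg_{-d}$, which, since $N_\fq\subset\fq_{-d}$, is (a strengthening of) the very reducing condition you are trying to prove; the numerical coincidence $\dim N_\fq=r'=r$ does not by itself give transversality, so the finiteness of $\pi$ is not established. This half therefore remains a plan rather than a proof. To be fair, the published proof is also laconic at this point --- its exact codimension identity reduces the ``if'' implication to the same genericity statement --- so your instinct to exploit the stability of the orthogonal representations and the non-degeneracy of the form of Lemma~\ref{lem:1.3} on $\fq_{-d}$ points in a reasonable direction, but the argument is not carried out.
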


\begin{proof}
Let $\fm$ be an $(\fq+\fs)$-invariant complementary subspace of $\fq$ in $\fg$.
Then
$$
\fz(\fs)=\fz_\fq(\fs_\fq)+\fz_\fm(\fs),
$$
\noindent and, for any $x\in\fq$, we have

\begin{equation}
  \label{eq:3.1}
[\fz(\fs),x]=[\fz_\fq(\fs_\fq),x]+[\fz_\fm(\fs),x],
\end{equation}

\noindent the second summand lying in $\fm$. The condition
$\overline{Z(\fs)\fq_{-d}}=\fg_{-d}$ means that, for a generic $x\in\fq_{-d}$,
one has

\begin{equation}
  \label{eq:3.2}
\fq_{-d}+[\fz(\fs),x]=\fg_{-d}.
\end{equation}

Due to (3.1), this is equivalent to the equality
$$
[\fz_\fm(\fs),x]=\fm_{-d}.
$$
It also follows from (3.1) that the codimension of $Z(\fs)x$ in $\fg_{-d}$
is equal to the codimension of $Z_H(\fs_\fq)x$ in $\fg_{-d}$ plus the codimension
of $[\fz_\fm(\fs),x]$ in $\fm_{-d}$. Hence, (3.1) holds if and only if
these codimensions are equal.
\end{proof}

\begin{remark}
\label{rem:3.4}
Let $\fl$ be the semisimple part of the centralizer of a Cartan subalgebra
of the centaliser of $\fs$. This is the well-known minimal semisimple Levi
subalgebra, containing $\fs$. It is interesting that the depth of $e$ in $\fl$
is always equal to $d$, provided that $d$ is even. We can check this by a
case-wise verification but we have no conceptual proof of this fact.
Hence $\fl$ is a good candidate for a reducing subalgebra for $e$. For
example, by Theorems \ref{th:1.1} and \ref{th:3.3}, $\fl$ is a reducing
subalgebra for $e$, provided that $\rk_\fg\, e =1$.
\end{remark}
\begin{remark}
\label{rem:3.5}
The minimal semisimple Levi subalgebra $\fl$, contaning $e$, is a reducing
subalgebra if and only if the stabiliser in
$Z(\fs)$
of a generic element of $\fg_{-d}$ is a maximal torus of $Z(\fs)$.
Indeed, $\fl$ is the semisimple part of the centralizer of a maximal torus of
$Z(\fs)$, hence $\fl \cap \fg$ is the fixed point set of this torus on $\fg$.
\end{remark}
\begin{remark}
\label{rem:3.6}
By Lemma 1.3(b), the representation of the reductive group $Z(\fs)$ on
$\fg_{-d}$
is orthogonal. Hence, if
$\dim \fg_{-d}=1$,
then the representation of
$\fz(\fs)$
on $\fg_{-d}$ is trivial. Furthermore, if $\dim \fg_{-d}=2$,
this representation is trivial as well. Indeed, this representation cannot
contain $SO_2$, since in this case $r=1$, hence, by Remark \ref{rem:3.4},
$\fl$ is a reducing subalgebra, and, by Remark \ref{rem:3.5}, the generic
stabilizer must contain a maximal torus of $SO_2$, a contradiction.
Finally, if  $\dim \fg_{-d}=3$, the only possibilities for the linear group
$Z^0(\fs)| \fg_{-d}$ are: trivial, $SO_3$, and $T_1\subset SO_3$,
and in the last case $\fl$ is not a reducing subalgebra.
\end{remark}
The meaning of the notion of a reducing subalgebra is explained by the
following theorem.

\begin{theorem}
  \label{th:3.4}
Let $\fq\subset\fg$ be a reducing subalgebra for $e$. Then $e$ is of semisimple
type if and only if $e\in\fq$ and $e$ is of semisimple type in $\fq$.
\end{theorem}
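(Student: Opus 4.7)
The plan is to establish both directions of the biconditional via Corollary~\ref{cor:2.3}, which reduces ``semisimple type'' to the existence of a single semisimple cyclic element. Theorem~\ref{th:3.3} ensures that the depth of $e_\fq$ in $\fq$ equals $d$, so $\fq_{-d}$ is the correct component space for cyclic elements in both $\fq$ and $\fg$.

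The implication ($\Leftarrow$) is direct: if $e\in\fq$ is of semisimple type in $\fq$, then there exists $F\in\fq_{-d}$ with $e+F$ semisimple in $\fq$. Since $\fq$ is semisimple, Jordan decomposition in $\fq$ agrees with that in $\fg$, so $e+F$ is semisimple in $\fg$; as $\fq_{-d}\subset\fg_{-d}$, it is a semisimple cyclic element for $e$ in $\fg$, and Corollary~\ref{cor:2.3} finishes.

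For ($\Rightarrow$), I would start with a semisimple cyclic element $e+F$ in $\fg$ (existing by hypothesis) and exploit the reducing property $\overline{Z(\fs)\fq_{-d}}=\fg_{-d}$ to arrange $F\in\fq_{-d}$: the set of semisimple cyclic elements is Zariski-open and hence meets the dense set $Z(\fs)\fq_{-d}$, and since $Z(\fs)$ fixes $e$ and preserves semisimplicity we can translate $F$ into $\fq_{-d}$. Next I would use $\fs\subset\fn(\fq)=\fq\oplus\fz(\fq)$ to decompose $e=e_\fq+e_{\fz(\fq)}$. Since $h\in\fs$ preserves this splitting, $e_{\fz(\fq)}\in\fz(\fq)\cap\fg_2$, and every element of $\fg_2$ is nilpotent in $\fg$ (because $\ad$ shifts the grading by $2$ on a finite range). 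As $e_{\fz(\fq)}$ commutes with $e_\fq$ and with $F\in\fq$, we obtain
$$e+F=(e_\fq+F)+e_{\fz(\fq)}$$
as a sum of two commuting elements, one in $\fq$ and one in $\fz(\fq)$. Since Jordan components of an element of $\fq$ remain in $\fq$, those of an element of $\fz(\fq)$ remain in $\fz(\fq)$, and $\fq\cap\fz(\fq)=0$ (the center of the semisimple algebra $\fq$ vanishes), uniqueness of Jordan decomposition forces each summand to be individually semisimple; combined with the nilpotence of $e_{\fz(\fq)}$ this gives $e_{\fz(\fq)}=0$, i.e., $e\in\fq$. Then $e+F\in\fq$ is a semisimple cyclic element for $e$ in $\fq$, and Corollary~\ref{cor:2.3} applied within $\fq$ yields that $e$ is of semisimple type in $\fq$.

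The main obstacle is the Jordan-decomposition bookkeeping in the reverse direction that forces $e_{\fz(\fq)}=0$; once one verifies that semisimplicity of a sum of two commuting elements, living respectively in the transverse reductive pieces $\fq$ and $\fz(\fq)$, implies semisimplicity of each, the rest is a clean invocation of Corollary~\ref{cor:2.3}, Theorem~\ref{th:3.3}, and the density condition defining the reducing subalgebra.
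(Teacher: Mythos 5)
Your proof is correct and follows essentially the same route as the paper: conjugate a generic semisimple cyclic element into $e+\fq_{-d}$ via the density condition, split along $\fn(\fq)=\fq\oplus\fz(\fq)$, and use uniqueness of the Jordan decomposition together with the nilpotence of $e_{\fz(\fq)}\in\fz(\fq)\cap\fg_2$ to force $e\in\fq$. The only cosmetic point is that the set of $F$ with $e+F$ semisimple is not literally Zariski-open, but by stability of $G^0|\fg^2$ (Proposition~\ref{prop:2.2}(b)) it contains a dense open subset, which is all your intersection argument needs.
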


\begin{proof}
By the definition of a reducing subalgebra, a generic cyclic element $e+F$
(where $F\in\fg_{-d}$) is $Z(\fs)$-conjugate to an element of
$e+\fq_{-d}\subset\fq\oplus\fz(\fq)$. The latter is semisimple if and only if
its projections to both $\fq$ and $\fz(\fq)$ are semisimple, which is only
possible if $e\in\fq$.
\end{proof}

\begin{theorem}
  \label{th:3.5}
Under the assumptions of Theorem \ref{th:3.4}, the projection
$e_{\fz}$ of $e$ to
$\fz(\fq)$ is a nilpotent element of depth $<d$ in the semisimple part
$\fz(\fq)'$ of $\fz(\fq)$.
\end{theorem}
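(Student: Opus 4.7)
The plan is to decompose $e = e_\fq + e_\fz$ along the splitting $\fn(\fq) = \fq \oplus \fz(\fq)$ (and similarly $h = h_\fq + h_\fz$, $f = f_\fq + f_\fz$, giving an $\fsl_2$-triple $\{e_\fz, h_\fz, f_\fz\}$ in $\fz(\fq)$), and to establish in turn that (a) $e_\fz$ is nilpotent in $\fg$; (b) $e_\fz \in \fz(\fq)'$; (c) its depth in $\fz(\fq)'$ is strictly less than $d$.

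For (a), since $\fq$ is semisimple and $\fz(\fq)$ is reductive, the Jordan decompositions in these subalgebras agree with that in $\fg$, so the nilpotence of $e$ forces both $e_\fq$ and $e_\fz$ to be nilpotent. For (b), the center of the reductive algebra $\fz(\fq)$ is toral, hence the nilpotent element $e_\fz$ must lie in the semisimple part $\fz(\fq)'$.

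The substantive step is (c). First note that $h_\fq$ commutes with $\fz(\fq)$, so $\ad h$ and $\ad h_\fz$ coincide on $\fz(\fq)$; consequently the Dynkin grading of $e_\fz$ in $\fz(\fq)$ agrees with the induced grading $\fz(\fq)_j := \fz(\fq) \cap \fg_j$, and it is enough to show $\fz(\fq)_{-d} = 0$. Since $\fq + \fs$ is reductive, pick an $(\fq+\fs)$-invariant complement $\fm'$ of $\fn(\fq)$ in $\fg$, giving compatible decompositions
\[
\fg = \fq \oplus \fz(\fq) \oplus \fm', \qquad \fz(\fs) = \fz_\fq(\fs_\fq) \oplus \fz_{\fz(\fq)}(\fs_\fz) \oplus \fz_{\fm'}(\fs).
\]
Apply the defining condition of a reducing subalgebra in the form $\fq_{-d} + [\fz(\fs), x] = \fg_{-d}$ (cf.\ (3.2)) to a generic $x \in \fq_{-d}$. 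Then $[\fz_\fq(\fs_\fq), x] \subset \fq_{-d}$; the middle piece $[\fz_{\fz(\fq)}(\fs_\fz), x]$ vanishes because $\fz(\fq)$ commutes with $\fq$; and $[\fz_{\fm'}(\fs), x] \subset \fm'_{-d}$ by $\fq$-invariance of $\fm'$ together with the grading. Hence the left-hand side lies in $\fq_{-d} \oplus \fm'_{-d}$, forcing $\fz(\fq)_{-d} = 0$.

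The main thing to recognize is how to exploit the reducing condition: once $\fz(\fs)$ is decomposed along the $\fq$-module structure of $\fg$, the crucial observation is that the $\fz(\fq)$-component of $\fz(\fs)$ acts trivially on $\fq_{-d}$, which creates the gap that collapses $\fz(\fq)_{-d}$ to zero. Everything else is routine book-keeping with invariant complements and the Jordan decomposition.
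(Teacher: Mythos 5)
Your proof is correct, but it takes a genuinely different route from the paper's. The paper's argument is a two-line contradiction via the rank: if $e_\fz$ had depth $d$ in $\fz(\fq)'$, then (since $d$ is even) Lemma \ref{lem:1.3} would produce a non-zero semisimple element in the degree-two piece coming from $\fz(\fq)'$, forcing $\rk_\fg\, e>\rk_\fq\, e_\fq$ and contradicting the rank equality in the criterion of Theorem \ref{th:3.3}. You instead return to the linearized reducing condition (\ref{eq:3.2}) and refine the decomposition used in the proof of Theorem \ref{th:3.3}: splitting the complement of $\fq$ further as $\fz(\fq)\oplus\fm'$ and observing that $\fz_{\fz(\fq)}(\fs_\fz)$ annihilates $\fq_{-d}$, you see that $\fq_{-d}+[\fz(\fs),x]$ lands inside $\fq_{-d}\oplus\fm'_{-d}$ and therefore misses $\fz(\fq)_{-d}$ entirely, whence $\fz(\fq)_{-d}=0$. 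This is more elementary --- it needs neither the rank machinery nor the non-degeneracy of the quadratic form on $\fg_{-d}$ --- and it yields the slightly sharper conclusion $\fz(\fq)\cap\fg_{\pm d}=0$ directly; the paper's version is shorter on the page only because it leans on Theorem \ref{th:3.3}, whose proof contains essentially the same bookkeeping that you carry out explicitly. Your preliminary steps (a) and (b) are fine but can be obtained more cheaply: the projection onto $\fz(\fq)$ commutes with $\ad h$, so $e_\fz\in\fz(\fq)\cap\fg_2$, which is automatically $\ad$-nilpotent and meets the center of $\fz(\fq)$ (sitting in $\fg_0$) trivially.
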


\begin{proof}
Suppose that the depth of $e_\fz$ in $\fz(\fq)'$ is equal to $d$. Then
there is a non-zero semisimple element in $\fz(\fq)^2$, whence
$\rk_\fg\, e>\rk_\fq\, e_\fq$, which contradicts Theorem \ref{th:3.3}.
\end{proof}

\begin{proposition}\label{prop:3.6}
Let $\fq\subset\fg$ be a reducing subalgebra for $e$. Then
\begin{list} {}{}
\item (a)
Any semisimple subalgebra normalized by $\fs$ and containing $\fq$ is also
a reducing subalgebra for $e$.
\item (b)
Any reducing subalgebra for $e_\fq$ in $\fq$ is a reducing subalgebra for
$e$ in $\fg$.
\end{list}
\end{proposition}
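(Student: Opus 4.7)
The plan is to unravel the definition of reducing subalgebra in both parts, using that containment of $\ad h$-invariant subspaces is preserved under intersection with $\fg_{-d}$ and under application of $Z(\fs)$.

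For (a), let $\fq'\supset\fq$ be a semisimple subalgebra normalized by $\fs$. Since $h\in\fs$ normalizes $\fq'$, the subalgebra $\fq'$ is $\ad h$-invariant and inherits the $\ZZ$-grading of $\fg$; in particular $\fq'_{-d}=\fq'\cap\fg_{-d}\supseteq\fq\cap\fg_{-d}=\fq_{-d}$. Therefore
$$\overline{Z(\fs)\fq'_{-d}}\supseteq\overline{Z(\fs)\fq_{-d}}=\fg_{-d},$$
and since the opposite containment is trivial, $\fq'$ is reducing for $e$.

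For (b), let $\fr\subset\fq$ be a reducing subalgebra for $e_\fq$ in $\fq$. I first check that $\fr$ is actually normalized by $\fs$ (not merely by $\fs_\fq$). Writing any $s\in\fs\subset\fn(\fq)=\fq\oplus\fz(\fq)$ as $s=s_\fq+s_\fz$, for any $x\in\fr\subset\fq$ one has $[s,x]=[s_\fq,x]$ because $[\fz(\fq),\fq]=0$, and this lies in $\fr$ since $\fr$ is $\fs_\fq$-normalized. Next, by Theorem~\ref{th:3.3} the depth of $e_\fq$ in $\fq$ is $d$, so $\fr_{-d}$ means the same in either ambient grading. For the centralizers, I note that $Z_\fq(\fs_\fq)\subseteq Z(\fs)$: if $g\in Q$ fixes $\fs_\fq$, then $g$ also fixes the $\fz(\fq)$-part of $\fs$ because $Q$ centralizes $\fz(\fq)$ (its Lie algebra $\fq$ does, and $Q$ is connected), hence $g$ fixes all of $\fs$.

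Finally I combine the density statements. From $\overline{Z_\fq(\fs_\fq)\fr_{-d}}=\fq_{-d}$ and $Z_\fq(\fs_\fq)\subseteq Z(\fs)$, I get $\fq_{-d}\subseteq\overline{Z(\fs)\fr_{-d}}$. Since every $g\in Z(\fs)$ acts as a homeomorphism of $\fg_{-d}$, one has $Z(\fs)\cdot\overline{X}\subseteq\overline{Z(\fs)\cdot X}$ for any subset $X$; applying this with $X=Z(\fs)\fr_{-d}$ yields
$$Z(\fs)\fq_{-d}\subseteq Z(\fs)\cdot\overline{Z(\fs)\fr_{-d}}\subseteq\overline{Z(\fs)\cdot Z(\fs)\fr_{-d}}=\overline{Z(\fs)\fr_{-d}}.$$
Taking closures and using that $\fq$ is reducing for $e$ gives $\fg_{-d}=\overline{Z(\fs)\fq_{-d}}\subseteq\overline{Z(\fs)\fr_{-d}}$, so $\fr$ is reducing for $e$ in $\fg$. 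The only real subtlety is the final topological combining step, keeping track of the order of orbit application and closure; this is not a genuine obstacle, just a point where care is required.
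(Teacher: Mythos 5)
Your proof is correct and follows essentially the same route as the paper, which disposes of (a) as obvious and of (b) by the single observation that $Z_Q(\fs_\fq)\subset Z(\fs)$ (acting on $\fg_{-d}$); you have simply filled in the routine details — that $\fr$ is normalized by all of $\fs$ because $[\fz(\fq),\fq]=0$, that the gradings agree via Theorem~\ref{th:3.3}, and the closure bookkeeping $\fg_{-d}=\overline{Z(\fs)\fq_{-d}}\subseteq\overline{Z(\fs)\fr_{-d}}$. No gaps.
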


\begin{proof}
The assertion (a) is obvious. The assertion (b) follows from the inclusion
$Z_H(\fs_\fq)\subset Z(\fs)$.
\end{proof}

Due to Proposition \ref{prop:3.6}(b), there is a minimal reducing subalgebra
for $e$. An open question is whether it is unique up to conjugation.

The simplest case, when the condition $\overline{Z(\fs)\fq_{-d}}=\fg_{-d}$
is satisfied, is when $\fq_{-d}=\fg_{-d}$. As the following proposition shows,
there is a unique minimal reducing subalgebra with the last property.

\begin{proposition}\label{prop:3.7}
Let $\fm\subset\fg$ be the $\fs$-submodule generated by $\fg_{-d}$ (the
isotypic component corresponding to the highest weight~$d$). Then the
subalgebra $\fq$ generated by $\fm$ is semisimple.
\end{proposition}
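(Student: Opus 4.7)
The plan is to show that $\fq$ is semisimple by proving that its radical $\fr$ is zero. First, since $\fm$ is $\fs$-invariant by construction, $\fq=\langle\fm\rangle$ is also $\fs$-invariant (the $\fs$-action being by derivations), and so $\fq$ inherits the $\ZZ$-grading of $\fg$: $\fq=\bigoplus_j\fq_j$. The extremal pieces satisfy $\fq_{\pm d}=\fg_{\pm d}$, since $\fm$ already contains $\fg_{\pm d}$ as the weight-$\pm d$ parts of the $V(d)$-isotypic space, and the grading of $\fg$ is concentrated in $[-d,d]$. The radical $\fr$, being characteristic, is also $\fs$-invariant and $\ZZ$-graded.

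The crucial step is to show $\fr_{\pm d}=0$. Suppose to the contrary that $0\neq v\in\fr_{-d}$. By $\fs$-invariance, $v':=(\ad e)^d v\in\fr_d$ and $v'\neq 0$ by standard $\fsl_2$-theory. By Lemma~\ref{lem:1.3}(a), the symmetric form $B(y_1,y_2)=\kappa((\ad e)^d y_1,y_2)$ is non-degenerate on $\fg_{-d}$; after adjusting $v$ (handling the totally isotropic sub-case where $B$ vanishes on $\fr_{-d}$ by a supplementary polarization argument with vectors outside $\fr_{-d}$), one may arrange $\kappa(v,v')=Q(v)\neq 0$, whence $\kappa([v,v'],h)=d\,\kappa(v,v')\neq 0$. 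Set $x:=[v,v']\in[\fr,\fr]$; since $[\fr,\fr]$ is a nilpotent ideal of $\fq$, the operator $\ad x$ restricted to $\fq$ is nilpotent. Writing the Jordan decomposition $x=x_s+x_n$ in $\fg$, this nilpotence forces $x_s$ to centralize $\fq$: indeed $\ad x_s$ is a polynomial in $\ad x$ so it preserves $\fq$, and its restriction is the semisimple part of the nilpotent $\ad x|_\fq$, hence zero. Since $x\in\fg_0$ commutes with $h$, so do $x_s$ and $x_n$, and $\kappa(x_n,h)=0$ (trace of a commuting nilpotent--semisimple pair), giving $\kappa(x_s,h)=\kappa(x,h)\neq 0$ and hence $x_s\neq 0$. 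Thus $x_s$ is a non-zero semisimple element of $\fz(\fq)=\fz(\fm)$ commuting with the full $V(d)$-isotypic component $\fm$; the contradiction is completed by ruling out such an element.

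Once $\fr_{\pm d}=0$, the $\fs$-invariant $\fr$ contains no $V(d)$-isotypic $\fs$-submodule, so $\fm\cap\fr=0$. Applying Mostow's theorem to the $\fs$-action on $\fq$ by derivations yields an $\fs$-invariant Levi complement $\fq_{ss}$ to $\fr$ in $\fq$. The $V(d)$-isotypic component of $\fq$ coincides with $\fm$ (since $\fm$ is already the full $V(d)$-isotypic of $\fg$), and lies entirely in $\fq_{ss}$ because $\fr$ has no $V(d)$-isotypic part. Therefore $\fm\subset\fq_{ss}$, and since $\fq_{ss}$ is a subalgebra containing $\fm$, we conclude $\fq=\langle\fm\rangle\subset\fq_{ss}$, i.e.\ $\fr=0$.

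The main obstacle is the structural step of ruling out a non-zero semisimple element of $\fz(\fm)$, together with carefully handling the potentially totally isotropic case of $\fr_{-d}$. Both rely on exploiting that $\fm$ is the \emph{full} $V(d)$-isotypic component of $\fg$ rather than merely a submodule, so that the orthogonal $Z(\fs)$-action on $\fg_{-d}$ (preserving the non-degenerate form $Q$) sufficiently constrains invariant subspaces and centralizers.
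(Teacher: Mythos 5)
Your overall strategy (grade the radical $\fr$ of $\fq$, kill its top-weight part, then use an $\fs$-invariant Levi decomposition) is reasonable, and the bookkeeping at the beginning and at the end is correct. But the crucial middle step, $\fr_{\pm d}=0$, is not actually proved, and the two places you flag as ``the main obstacle'' are genuine gaps, not technicalities. First, if $\fr_{-d}$ is totally isotropic for the form $Q(y)=((\ad e)^d y,y)$, then there is no $v\in\fr_{-d}$ with $Q(v)\neq 0$, so ``adjusting $v$'' is impossible; the natural fix is to pair $v\in\fr_{-d}$ against a general $w\in\fg_{-d}$ with $((\ad e)^d w,v)\neq 0$, but then $x=[v,(\ad e)^d w]$ lies only in $[\fq,\fr]$, not in $[\fr,\fr]$, and your subsequent appeal to ``$[\fr,\fr]$ is a nilpotent ideal'' no longer applies as written. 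Second, and more seriously, the terminal ``contradiction'' --- that there is no non-zero semisimple element of $\fg$ centralizing $\fm$ --- is false in general: $\fz(\fq)=\fz(\fm)$ is typically a large reductive subalgebra (this is the whole point of Theorems \ref{th:3.4} and \ref{th:3.5} and of the column $\fg^n=\fz(\fg^s)'$ in Tables 5.1--5.4; for the minimal nilpotent in $E_6$, for instance, one gets a centralizer of type $A_5$). So the element $x_s$ you produce is not self-contradictory, and the proof does not close.

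The argument can be completed, but by a different mechanism than the one you reach for. The standard fact you need is that $[\fq,\fr]$ (hence $[\fr,\fr]$) acts nilpotently in every finite-dimensional representation of $\fq$, in particular on $\fg$ via the adjoint action; thus your $x=[v,w']\in[\fq,\fr]$ is a nilpotent element of $\fg$, whence $(x,h)=0$ because $x$ and $h$ commute --- directly contradicting $(x,h)=-d\,(v,w')\neq 0$. This is essentially the paper's proof in disguise: there one takes an $\fs$-invariant Levi decomposition $\fq=\fl+\fn$ and observes that the unipotent radical $\fn$ is orthogonal to $\fq$ for the Killing form, while that form is non-degenerate on the isotypic component $\fm$ (cf.\ Lemma \ref{lem:1.3}(a)), so $\fm\cap\fn=0$, hence $\fm\subset\fl$ and $\fq=\langle\fm\rangle\subset\fl$ is reductive; semisimplicity then follows because $\fg_{-d}$ consists of nilpotent elements of $\fg$ and therefore lies in $[\fq,\fq]$. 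In short: what kills the radical is the orthogonality of the nilpotency ideal to all of $\fq$, not any scarcity of semisimple elements in $\fz(\fm)$.
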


\begin{proof}
Let $\fn$ be the unipotent radical of $\fq$, and $\fq=\fl+\fn$ be a Levi decomposition. One may assume that the reductive subalgebra $\fl$ is
normalized by $\fs$.  Since the scalar product is non-degenerate on $\fm$,
we have $\fm\cap\fn=0$, so $\fm\subset\fl$; but then $\fq=\fl$ (and $\fn=0$).
Thus, $\fq$ is reductive. Clearly, $\fg_{-d}$ lies in the semisimple part $\fq'=[\fq,\fq]$ of $\fq$. Since $\fq'$ is normalized by $\fs$, we obtain
$\fm\subset\fq'$, and hence $\fq=\fq'$ is semisimple.
\end{proof}

Another way to get reducing subalgebras is given by the following proposition.

\begin{proposition}\label{prop:3.8}
Let $K$ be any reductive subgroup of a generic stabilizer of the action
$Z(\fs)|\fg_{-d}$. Then the semisimple part $(\fg^K)'$  of the subalgebra
$\fg^K$ of $K$-fixed elements of $\fg$ is a reducing subalgebra for $e$.
\end{proposition}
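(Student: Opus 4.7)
My plan is to verify the three hypotheses of Remark \ref{rem:3.2} for the subalgebra $\fg^K$; once this is done, Remark \ref{rem:3.2} immediately identifies $(\fg^K)'$ as a reducing subalgebra for $e$. Concretely, I need to check: (i) $\fg^K$ is reductive, (ii) $\fg^K$ is normalised by $\fs$, and (iii) $\overline{Z(\fs)(\fg^K)_{-d}}=\fg_{-d}$.

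Steps (i) and (ii) are quick. Since $K\subset Z(\fs)$, the adjoint action of $K$ on $\fs$ is trivial, so $\fs\subset \fg^K$; in particular $\fs$ normalises $\fg^K$, and the $\ZZ$-grading of $\fg$ restricts to a grading $\fg^K=\bigoplus_j (\fg^K)_j$ with $(\fg^K)_{-d}=(\fg_{-d})^K$. For reductivity, note that $\fg^K$ is the Lie algebra of the centraliser of $K$ in $G$, which is reductive because $K$ is reductive (the adjoint $K$-module $\fg$ decomposes completely, and the restriction of the Killing form to $\fg^K$ remains non-degenerate).

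The main step is (iii). Because $Z(\fs)$ is reductive and acts linearly on $\fg_{-d}$, there exist an open dense subset $U\subset \fg_{-d}$ and a subgroup $H\subset Z(\fs)$ (a generic stabiliser) with the property that for every $F\in U$ one has $\mathrm{Stab}_{Z(\fs)}(F)=g_F H g_F^{-1}$ for some $g_F\in Z(\fs)$. Replacing $K$ by a $Z(\fs)$-conjugate (which replaces $(\fg^K)'$ by a conjugate reducing candidate) we may assume $K\subset H$. Then for each $F\in U$,
\[
g_F^{-1}Kg_F\subset \mathrm{Stab}_{Z(\fs)}(F),
\]
so $K$ fixes $g_F^{-1}F$; hence $g_F^{-1}F\in(\fg_{-d})^K=(\fg^K)_{-d}$ and $F\in Z(\fs)(\fg^K)_{-d}$. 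Therefore $U\subset Z(\fs)(\fg^K)_{-d}$, and passing to closures gives $\fg_{-d}=\overline{U}\subset \overline{Z(\fs)(\fg^K)_{-d}}$, as required.

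The main obstacle is (iii), which rests on two points: the existence of a generic stabiliser for the reductive linear action $Z(\fs)|\fg_{-d}$, and the book-keeping that the hypothesis ``$K$ is contained in a generic stabiliser'' is only meaningful up to $Z(\fs)$-conjugation. Once these are made precise, the argument is formal, and Remark \ref{rem:3.2} converts the reductive $\fs$-normalised subalgebra $\fg^K$ satisfying (iii) into the reducing subalgebra $(\fg^K)'$.
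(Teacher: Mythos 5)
Your proof is correct, and it takes a noticeably more elementary route than the paper. The paper's own argument is a one-liner: it observes that $(\fg^K)'$ is normalized appropriately and then invokes the Luna--Richardson theorem [LR] to conclude that \emph{every closed} orbit of $Z(\fs)|\fg_{-d}$ meets $\fg^K_{-d}$; combined with the stability of this orthogonal action, that gives the required density. You instead verify the hypotheses of Remark \ref{rem:3.2} directly, and for the density condition you use only the existence of a principal isotropy group (Richardson's theorem for reductive linear actions in characteristic $0$) together with the formal observation that a point whose stabilizer contains a conjugate of $K$ is itself conjugate into $(\fg_{-d})^K$. In effect you prove and use only the ``easy direction'' of the Luna--Richardson reduction, which is exactly what Definition \ref{def:3.1} requires; the paper's appeal to [LR] yields the stronger statement that all closed orbits, not merely generic ones, meet $\fg^K_{-d}$, but that extra strength is not needed here. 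Your explicit checks that $\fs\subset\fg^K$, that $\fg^K$ is reductive (non-degeneracy of the restricted Killing form), and that the grading restricts so that $(\fg^K)_{-d}=(\fg_{-d})^K$, are all correct and in fact fill in details the paper leaves implicit; the remark that the hypothesis ``$K$ lies in a generic stabilizer'' is only well posed up to $Z(\fs)$-conjugacy is a legitimate and harmless normalization. (There is a trivial slip of an inverse: from $K\subset H$ and $\mathrm{Stab}(F)=g_FHg_F^{-1}$ one gets that $K$ fixes $g_F F$ rather than $g_F^{-1}F$, but this does not affect the conclusion $F\in Z(\fs)(\fg^K)_{-d}$.)
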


\begin{proof}
Clearly, the subalgebra $(\fg^K)'$ is normalized by $Z(\fs)$. By the Luna--
Richardson theorem [LR], any closed orbit of the action $Z(\fs)|\fg_{-d}$
intersects $\fg^K_{-d}$, and hence $(\fg^K)'$ is a reducing subalgebra for $e$.
\end{proof}

\begin{corollary}\label{cor:3.9}
There is a reducing subalgebra $\fq\subset\fg$ for $e$ such that a generic
stabilizer for the action $Z_Q(\fs_\fq)|\fq_{-d}$ lies in the center of $Q$.
\end{corollary}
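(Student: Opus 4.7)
The plan is to iterate Proposition~\ref{prop:3.8} and argue by dimension descent. Initialize $\fq^{(0)} := \fg$, which is trivially a reducing subalgebra for $e$, with $\fs_{\fq^{(0)}} = \fs$ and $e_{\fq^{(0)}} = e$. Given a reducing subalgebra $\fq^{(i)}$ for $e$, let $K^{(i)}$ be a generic isotropy subgroup for the action $Z_{Q^{(i)}}(\fs_{\fq^{(i)}}) | \fq^{(i)}_{-d}$. By Lemma~\ref{lem:1.3}(a) applied to $e_{\fq^{(i)}}$ inside $\fq^{(i)}$, this representation is orthogonal, hence stable (as already noted in Section~\ref{sec:2}), so its generic orbits are closed, and Matsushima's theorem then ensures that $K^{(i)}$ is reductive.

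If $K^{(i)} \subseteq Z(Q^{(i)})$, the subalgebra $\fq := \fq^{(i)}$ already satisfies the conclusion and we stop. Otherwise, apply Proposition~\ref{prop:3.8} inside $\fq^{(i)}$ (working with the nilpotent $e_{\fq^{(i)}}$, and with $K^{(i)}$ itself playing the role of the reductive subgroup of the generic stabilizer) to obtain the reducing subalgebra $\fq^{(i+1)} := ((\fq^{(i)})^{K^{(i)}})'$ for $e_{\fq^{(i)}}$ in $\fq^{(i)}$. By Proposition~\ref{prop:3.6}(b), $\fq^{(i+1)}$ is then also a reducing subalgebra for $e$ in $\fg$, and the iteration can continue.

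The key step is strict dimension descent whenever the process does not stop. Since the kernel of the adjoint action of $Q^{(i)}$ on $\fq^{(i)}$ is precisely $Z(Q^{(i)})$, the hypothesis $K^{(i)} \not\subseteq Z(Q^{(i)})$ forces $K^{(i)}$ to act non-trivially on $\fq^{(i)}$. The fixed-point subalgebra $(\fq^{(i)})^{K^{(i)}}$, being the set of invariants of a reductive group acting on a semisimple Lie algebra, is reductive and now proper, so its derived subalgebra $\fq^{(i+1)}$ satisfies $\dim \fq^{(i+1)} < \dim \fq^{(i)}$. Hence the chain $\fq^{(0)} \supsetneq \fq^{(1)} \supsetneq \cdots$ must terminate in finitely many steps, yielding the desired $\fq$.

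The main obstacle is precisely this strictness argument: one needs both the reductivity of $K^{(i)}$ (to be allowed to feed it into Proposition~\ref{prop:3.8}) and the observation that a reductive subgroup of the simply connected semisimple group $Q^{(i)}$ which is not contained in its center must have a non-trivial adjoint action on $\fq^{(i)}$. Everything else is bookkeeping enabled by Propositions~\ref{prop:3.6}(b) and~\ref{prop:3.8}, which ensure that the reducing-subalgebra property is preserved along the chain and transported back to $\fg$.
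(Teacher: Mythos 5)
Your proof is correct, but it reaches the conclusion by a different (iterative) route than the paper, whose entire proof is the single sentence: take for $K$ the \emph{whole} of a generic stabilizer of the action $Z(\fs)|\fg_{-d}$ and apply Proposition~\ref{prop:3.8} once. The content left implicit there is that a generic element $F$ of $\fq_{-d}$, where $\fq=(\fg^K)'$, is already generic in $\fg_{-d}$ and, by the Luna--Richardson theory of principal isotropy groups, has stabilizer in $Z(\fs)$ equal to $K$; since $K$ fixes $\fg^K\supset\fq$ pointwise, the stabilizer of $F$ in $Z_Q(\fs_\fq)$ is contained in the kernel of the adjoint action of $Q$ on $\fq$, i.e.\ in $Z(Q)$, so one application suffices. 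Your descent trades this identification of the new generic stabilizer for a termination argument: if $K^{(i)}$ is not central it acts non-trivially on $\fq^{(i)}$, so $((\fq^{(i)})^{K^{(i)}})'$ is a strictly smaller reducing subalgebra (still reducing for $e$ in $\fg$ by Proposition~\ref{prop:3.6}(b)), and the chain can only terminate in the desired state. What your version buys is that you only need the part of Proposition~\ref{prop:3.8} that is explicitly proved (closed orbits meet the fixed-point set), together with the auxiliary facts you correctly supply --- reductivity of $K^{(i)}$ from stability of the orthogonal representation $Z_{Q^{(i)}}(\fs_{\fq^{(i)}})|\fq^{(i)}_{-d}$ plus Matsushima's criterion, and the fact that the kernel of the adjoint action of the connected group $Q^{(i)}$ on its Lie algebra is $Z(Q^{(i)})$; the cost is the induction. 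Both arguments are sound; yours is longer but more self-contained.
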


\begin{proof}
It suffices to take for $K$ the whole of a generic stabilizer of the action
$Z(\fs)|\fg_{-d}$.
\end{proof}

\begin{corollary}\label{cor:3.10}
There is a reducing subalgebra $\fq\subset\fg$ for $e$ such that $e_\fq$
is an even nilpotent element of $\fq$.
\end{corollary}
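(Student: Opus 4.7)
The plan is to apply Proposition \ref{prop:3.8} to the cyclic group generated by a carefully chosen involution in $Z(\fs)$. Since $\ad h$ acts on $\fg_j$ by the integer $j$, the one-parameter subgroup of $G$ determined by $h$ yields an element $\tau \in G$ whose adjoint action satisfies $\Ad(\tau)|_{\fg_j} = (-1)^j$; informally, $\tau = (-1)^h$. This $\tau$ fixes $e$, $h$, $f$ pointwise, so $\tau \in Z(\fs)$. Moreover, because $d$ is even, $\Ad(\tau)$ acts on $\fg_{-d}$ as $(-1)^{-d} = 1$, so $\tau$ lies in the (pointwise) kernel of the representation $Z(\fs)|\fg_{-d}$, and in particular in a generic stabilizer.

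Next, I would set $K = \langle \tau \rangle$. This is a finite (hence reductive) subgroup contained in a generic stabilizer of $Z(\fs)|\fg_{-d}$, so Proposition \ref{prop:3.8} applies and $\fq := (\fg^K)' = (\fg^\tau)'$ is a reducing subalgebra for $e$. By construction $\fg^\tau = \bigoplus_{j \text{ even}} \fg_j$, so the $\fs$-invariant subalgebra $\fq$ sits inside the sum of even graded components.

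It remains to verify that the projection $e_\fq$ is even in $\fq$. Since $\fg^\tau$ is reductive, its center consists of semisimple elements and its derived subalgebra $\fq$ contains every nilpotent element of $\fg^\tau$; in particular $e \in \fq$. The $\fsl_2$-triple $\fs$ is perfect as a Lie algebra, so $\fs = [\fs,\fs] \subset [\fg^\tau,\fg^\tau] = \fq$, and therefore $h \in \fq$ as well. Consequently $e_\fq = e$, $h_\fq = h$, and the eigenspace decomposition $\fq = \bigoplus_j \fq_j$ of $\ad h_\fq$ on $\fq$ is simply $\fq_j = \fq \cap \fg_j$. Since $\fq \subset \fg^\tau$ forces $\fq_j = 0$ for odd $j$, the element $e_\fq$ is even in $\fq$.

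The only real obstacle is a bookkeeping one: confirming that the involution $\tau$ genuinely belongs to the algebraic group $G$ (not merely to $\Aut(\fg)$) so that it makes sense to speak of the subgroup $K \subset Z(\fs)$ demanded by Proposition \ref{prop:3.8}, and confirming that finite subgroups qualify as ``reductive'' in that proposition. Both points are standard: $\tau$ is the value at $-1$ of the cocharacter $\FF^\times \to G$ attached to $h$, and finite groups are automatically linearly reductive in characteristic zero. No case analysis or delicate calculation is required.
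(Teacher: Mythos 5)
Your proof is correct and is essentially the paper's own argument: the paper takes $K$ to be the center of the connected subgroup $S$ with $\Lie S=\fs$, whose nontrivial element is exactly your involution $\tau=(-1)^h$, acts trivially on $\fg_{-d}$ because $d$ is even, and yields $\fq=(\sum_j\fg_{2j})'$ via Proposition \ref{prop:3.8}. Your write-up just makes explicit the verifications (that $\tau\in Z(\fs)$, that $e,\fs\subset(\fg^\tau)'$, and that the induced grading of $\fq$ is concentrated in even degrees) which the paper leaves implicit.
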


\begin{proof}
If $e$ is odd, it suffices to take for $K$ the center of the connected
subgroup $S$ with $\Lie\, S=\fs$, which acts trivially on $\fg_{-d}$.
In other words, $\fq=\sum_j \fg_{2j}$.
\end{proof}

The main general result of the present paper is that, for any nilpotent
element $e$ of even depth, there is a reducing subalgebra $\fq\subset\fg$
such that $e_\fq$ is of semisimple type in $\fq$. Unfortunately, we can
do it only by presenting such a subalgebra in each case. This is done
in the next sections. But if it is known that such a subalgebra exists,
then the following theorem together with Proposition \ref{prop:3.6}(b)
shows that there also exists a reducing subalgebra $\fq$ such that
$e_\fq$ is of regular semisimple type in $\fq$.

\begin{theorem}
  \label{th:3.11}
Let $e\in\fg$ be a nilpotent element of semisimple type. Then there
exists a reducing subalgebra $\fq$ for $e$ such that $e$ is of
regular semisimple type in $\fq$.
\end{theorem}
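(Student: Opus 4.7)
The plan is to proceed by induction on $\dim\fg$, using Proposition~\ref{prop:3.6}(b) to descend to successively smaller reducing subalgebras. In the base case, when $e$ is already of regular semisimple type in $\fg$, one simply takes $\fq=\fg$. For the inductive step, I would exhibit, under the assumption that $e$ is of semisimple but not regular semisimple type in $\fg$, a proper reducing subalgebra $\fq_1\subsetneq\fg$ for $e$ in which $e$ remains of semisimple type. Applying the inductive hypothesis to $\fq_1$ then yields a reducing subalgebra $\fq\subseteq\fq_1$ for $e_{\fq_1}=e$ of regular semisimple type, and by Proposition~\ref{prop:3.6}(b), $\fq$ is reducing in $\fg$.

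To construct $\fq_1$, I would pick a generic $F\in\fg_{-d}$, so that $e+F$ is semisimple, and set $\fc:=\fz_\fg(e+F)$, $\fa:=\fc'$. Since $e+F$ is not regular semisimple, $\fc$ properly contains a Cartan subalgebra of $\fg$, so $\fa$ is a nonzero semisimple Lie algebra. Because $\sigma_e(e+F)=\epsilon^2(e+F)$, the automorphism $\sigma_e$ normalizes $\fc$ and hence $\fa$; its restriction to $\fa$ is a finite-order automorphism. By the standard fact that every finite-order automorphism of a nonzero semisimple Lie algebra has a nonzero reductive fixed-point subalgebra, $\fa^{\sigma_e}$ contains a nontrivial torus $T$.

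Then $T\subset\fg^{\sigma_e}=\fg_0$, so $[T,h]=0$; and $T\subset\fa\subset\fc$ centralizes $e+F$, while $[T,e]\in\fg_2$ and $[T,F]\in\fg_{-d}$ lie in distinct graded components, forcing both brackets to vanish. Thus $T$ centralizes $e$, $h$, and $F$. Now the key Lie algebra computation: any element of $\fg_0$ killed by $\ad e$ is a highest-weight vector of weight $0$ in the $\fs$-module $\fg$, hence generates a trivial $\fs$-submodule and automatically lies in $\fz(\fs)$; that is, $\fz_\fg(e)\cap\fg_0=\fz(\fs)$. It follows that the Lie algebra of $Z(\{e,h\})$ equals $\fz(\fs)$, so the connected torus $T$ lies in $Z(\fs)$ and in the stabilizer of $F$. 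Applying Proposition~\ref{prop:3.8} with $K=T$, the subalgebra $\fq_1:=(\fg^T)'$ is reducing for $e$; it is a proper subalgebra of $\fg$ because $T$ is nontrivial and $\fg$ is centerless, and $e\in\fq_1$ as a nilpotent $T$-fixed element of the reductive algebra $\fg^T$. Finally, $e$ is still of semisimple type in $\fq_1$: a generic cyclic element of $\fq_1$ is $Z(\fs)$-conjugate, by the reducing property, to a generic cyclic element of $\fg$, which is semisimple by hypothesis, and semisimplicity is preserved by this conjugation and by the inclusion $\fq_1\subset\fg$.

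The main obstacle is the nonvanishing of $\fa^{\sigma_e}$, which depends on the fact that every finite-order automorphism of a nonzero semisimple Lie algebra admits a nonzero (in fact, reductive of positive rank) fixed-point subalgebra; this is a consequence of Kac's classification and must be invoked carefully. A secondary point worth verifying is that the relevant tori can be arranged simultaneously compatible with $\sigma_e$ and with $\fs$, which, thanks to the identity $\fz_\fg(\{e,h\})=\fz(\fs)$, turns out not to require any further conjugation.
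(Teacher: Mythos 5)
Your proof is correct, and it rests on exactly the same two pillars as the paper's argument: Proposition~\ref{prop:3.8} (Luna--Richardson) applied to a reductive subgroup of the stabilizer of a generic $F$, and the fact that a finite-order automorphism of a nonzero semisimple Lie algebra has a nonzero reductive fixed-point subalgebra --- which is precisely how the paper (citing [GOV], Theorem 3.3.7) produces a nonzero degree-zero part $\fz(e+F)^0$ of the non-abelian graded centralizer. The difference is purely organizational: instead of inducting on $\dim\fg$ by peeling off one torus $T\subset\fz(e+F)^0\cap\fa$ at a time, the paper takes $K=Z(e+F)\cap G^0$ to be the \emph{entire} stabilizer of the generic $F$ in a single step; Corollary~\ref{cor:3.9} then guarantees that in $\fq=(\fg^K)'$ the generic stabilizer of $Z_Q(\fs_\fq)|\fq_{-d}$ is central in $Q$, so $\fz_\fq(e+F)\cap\fq^0=0$ and $e+F$ is already regular in $\fq$ --- no iteration is needed. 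Your descent is sound as written (in particular, the verification that $e$ remains of semisimple type in $\fq_1$ is immediate from Theorem~\ref{th:3.4}, since $\fq_1$ is reducing and $e\in\fq_1$, so your density argument re-proves a special case of that theorem), but Corollary~\ref{cor:3.9} is exactly the tool that lets one collapse the induction into one application of Proposition~\ref{prop:3.8}.
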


\begin{proof}
Let $e+F\in\fg^2$ be a generic cyclic element associated with $e$. Note
that $K=Z(e+F)\cap G^0$ is the stabilizer of $F$ for the action
$Z(\fs)|\fg_{-d}$.

If $e$ is not of regular semisimple type, then $\fz(e+F)$ is a non-abelian
reductive $\ZZ/m\ZZ$-graded subalgebra of $\fg$. Hence, $\fz(e+F)^0\neq 0$
(see, e.g., [GOV], Theorem 3.3.7). By Proposition \ref{prop:3.8}
$\fq=(\fg^K)'$ is a reducing subalgebra for $e$ (containing $e$ and $F$).
Since a generic stabilizer for the action $Z_Q(\fs)|\fq_{-d}$ coincides
with the center of $Q$ (see Corollary \ref{cor:3.9}), we have
$\fz(e+F)\cap\fq^0=0$, which implies that $e$ is of regular semisimple
type in $\fq$.
\end{proof}


\section{Cyclic elements in classical Lie algebras}  
\label{sec:4}

Let $V$ be a vector space of dimension $n$.  Let $G \subset
GL(V)$ be one of the classical linear groups $SL(V)$, $SO(V)$,
$Sp(V)$ so that $\fg:=\Lie G=\fsl(V)$, $\so(V)$ , $\fsp(V)$,
respectively.

In the last two cases the space $V$ is endowed
with a non-degenerate symmetric or skew-symmetric bilinear form,
called the scalar product.  We will refer to these two types of
vector spaces with a scalar product as quadratic and symplectic
spaces.  The tensor product of two quadratic or two symplectic
spaces (resp. of a quadratic space and a symplectic space) is
naturally a quadratic (resp. symplectic) space.

We use the notation of $\so^+(V)$ (resp. $\fsp^+(V)$) for the space
of symmetric linear operators in a quadratic (resp. symplectic)
vector space~$V$.

It is well-known (see e.g. [C]) that, up to an automorphism of $\fg$,
a nilpotent element $e \in \fg$ is defined by the
orders $n_1,\ldots ,n_p$ of its Jordan blocks.  We shall assume
that $n_1 \geq n_2 \geq \cdots \geq n_p (>0)$.

In the case $G=SO(V)$ (resp. $G=Sp(V)$) the sequence $(n_1,\ldots
,n_p)$ contains each even (resp. odd) number with even
multiplicity.

Let, as before, $\fs = \{ e,h,f\} \subset \fg$ be an $\fsl_2$-triple
containing $e$.  The space $V$ decomposes into a direct sum
\begin{equation}
  \label{eq:1}
  V=V_1 \oplus \cdots \oplus V_p
\end{equation}
of $\fs$-invariant subspaces of dimensions  $n_1,\ldots ,n_p$.
In the case $G = SO(V)$ (resp. $G=Sp(V)$) one may assume that the
summands of even (resp. odd) dimension are grouped in pairs of
dual isotropic subspaces so that their sums and the remaining
single summands are mutually orthogonal.

The eigenvalues of $h$ in $V_i$ are $n_i-1, n_i-3, \ldots
-(n_i-3), -(n_i-1))$.  Looking at the roots of $\fg$, one can
observe that $e$ is even if and only if $n_1,\ldots ,n_p$ are
of the same parity.  The depth $d$ of $e$ is equal to $2n_1-2$,
except for the cases $G=SO(V)$, $n_1$ odd, $n_1-n_2=1$, when
$d=2n_1-3$, and $G=SO(V)$, $n_1$ odd, $n_1-n_2=2$, when $d=2n_1-4$.
In particular, by Theorem \ref{th:1.1}, a nilpotent element $e$
of a classical Lie algebra $\fg$ is of nilpotent type (equivalently,
$\rk e =0$) if and only if $\fg=\so(V)$, $dim\,V \geq 7$,
$n_1$ is odd and $n_1-n_2=1$;
it is easy to see that the associated with this $e$ generic cyclic element
is a nilpotent element, corresponding to the partition with the first part
$3n_1-2$, the part $n_2$ having multiplicity two less than for $e$,
and all other parts unchanged.
So, we will not consider the latter case in the rest of this section.

We are now going to describe the behavior of cyclic elements associated
with nilpotent elements of the classical Lie algebras $\fg =\fsl(V)$,
$\so(V)$, $\fsp(V)$.  In particular, we shall obtain the
Jordan decomposition for them.


As before, an important role is played by the centralizer $Z(\fs)$
of the $\fsl_2$-triple $\fs = \{ e,h,f \}$.  It is a reductive subgroup
of $G$, leaving invariant each component of the grading (0.1) of $\fg$
and each isotypic component of the representation of $\fs$ in~$V$.

\begin{theorem}
Let $e$ be a non-zero nilpotent element of even depth $d$ in a classical
Lie algebra $\fg$. Then there exists a reducing subalgebra $\fg^s\subset\fg$
such that the projection $e^s$ of $e$ to $\fg^s$ is a nilpotent element
of regular semisimple type in $\fg^s$.
\end{theorem}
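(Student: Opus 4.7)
The plan is to work through the classical Lie algebras $\fg = \fsl(V)$, $\so(V)$, $\fsp(V)$ one by one, using the $\fs$-invariant block decomposition $V = V_1 \oplus \cdots \oplus V_p$ from~(\ref{eq:1}) to exhibit $\fg^s$ explicitly. First I would identify $\fg_{-d}$: since $h$ has eigenvalues $n_i-1, n_i-3, \ldots, -(n_i-1)$ on $V_i$, the weight $-d = -(2n_1-2)$ on $\mathrm{Hom}(V_j, V_i)$ can only be realised when $n_i = n_j = n_1$, and is one-dimensional in each such Hom-block. Letting $k$ denote the multiplicity of the top part $n_1$ and $W = V_1 \oplus \cdots \oplus V_k$, this identifies $\fg_{-d}$ with a subspace of $\End \FF^k$ (the full matrix space for $\fsl$; its symmetric or antisymmetric part for $\so, \fsp$, according to the parity of $n_1$).

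Next I would take as candidate $\fg^s = \fsl(W)$, $\so(W)$, or $\fsp(W)$, according to the type of $\fg$, with $W$ carrying the restricted scalar product in the latter two cases. The block decomposition is compatible with the form --- pairs of isotropic blocks of the ``wrong'' parity combine into dual isotropic pairs within $W$ --- so the restriction of the form to $W$ is non-degenerate. The projection $e^s$ of $e$ along $\fz(\fg^s)$ then equals $e|_W$, because $e$ is block-diagonal and $e|_{V\setminus W}$ centralises $\fg^s$ (in the $\fsl$ case it is also traceless, so lies in $\fz(\fg^s)$). Thus $e^s$ corresponds to the partition $(n_1^k)$ in $\fg^s$.

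The verification then has two parts: (i) that $\fg^s$ is reducing, which is immediate from the equality $\fg^s_{-d} = \fg_{-d}$ above; and (ii) that $e^s$ is of regular semisimple type in $\fg^s$. For (ii) I would compute that a generic cyclic element $e^s + F$ on $W$ has characteristic polynomial $\det(\lambda^{n_1} I_k - A)$, where $A \in \End \FF^k$ encodes $F$ (symmetric or skew in the $\so, \fsp$ cases). For generic $A$ with distinct nonzero eigenvalues $\mu_1, \ldots, \mu_k$, the $n_1 k$ roots $\mu_i^{1/n_1}$ are all distinct, so $e^s + F$ is regular semisimple in $\fg^s$.

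The hard part will be the bookkeeping in the orthogonal and symplectic cases, particularly the exceptional $\so$-case with $n_1$ odd, $n_1 - n_2 = 2$ and $d = 2n_1 - 4$ noted in the discussion of depth preceding the theorem. There $\fg_{-d}$ picks up extra contributions from $\mathrm{Hom}(V_j, V_i)$ with $n_i + n_j = 2n_1 - 2$ and from the $V(2n_1-4)$-summand of $\Lambda^2 V_1$, so $W$ must be enlarged to include the $(n_1-2)$-blocks, and the characteristic-polynomial computation above needs a parity-adjusted version. I would also need to check low-rank degenerations (ensuring $k \geq 1$ and $\fg^s$ nontrivial), but these cases fit into the same uniform construction.
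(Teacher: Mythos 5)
Your construction of $\fg^s$ is too large in the orthogonal case, and the conclusion fails for it. Taking $W$ to be the span of the top Jordan blocks, the algebra $\fg(W)$ is indeed a reducing subalgebra (this is also the paper's first reduction step, since $\fg(W)_{-d}=\fg_{-d}$), and for $\fg=\fsl(V)$ and $\fg=\fsp(V)$ the projection $e^s$, with partition $(n_1^k)$, really is of regular semisimple type in $\fg(W)$, so there your argument goes through. But for $\fg=\so(V)$ it breaks down, because the key step ``for generic $A$ with distinct nonzero eigenvalues $\mu_1,\ldots,\mu_k$'' is false for the relevant matrix spaces. Writing $W=V_0\otimes R$ as in the paper, one has $\fg_{-d}=F_0\otimes\fsp^+(R)$ when $n_1$ is even ($R$ symplectic) and $\fg_{-d}=F_0\otimes\so(R)$ when $n_1$ is odd ($R$ quadratic). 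A generic element of $\fsp^+(R)$ has every eigenvalue with multiplicity two, so every eigenvalue of $e^s+F$ has multiplicity $\geq 2$ and $e^s+F$ is never regular in $\so(W)$ (compare Theorem \ref{th:4.3}(2), where regular semisimple type in case (a) requires $n_1$ odd). Worse, a generic element of $\so(R)$ with $\dim R$ odd has a kernel, so for a partition $(n_1^{2r+1})$ with $n_1\geq 3$ odd and $r\geq 1$ (e.g.\ $(3,3,3)$ in $\so_9$, of even depth $4$) the generic cyclic element acts nilpotently on $V_0\otimes\ker A$ and is not even semisimple: here your $\fg^s$ equals $\fg$ and $e^s=e$ is of mixed type.

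The repair is exactly what the paper does: after diagonalizing a generic $A$ by $Z(\fs)$, take for $\fg^s$ not $\fg(W)$ but the direct sum of one small classical algebra per block or per pair of dual isotropic blocks --- $\fsp(V_1)\oplus\cdots\oplus\fsp(V_p)$, or $\fsl(V_1)\oplus\cdots\oplus\fsl(V_r)$ embedded via the pairing $V_i\oplus V_i^*$ --- discarding the leftover block when the multiplicity is odd. The projection of $e$ is then principal in each factor, hence of regular semisimple type in this \emph{smaller} algebra even when it is not regular (or not even semisimple) in $\fg(W)$; the whole content of the theorem is that one may, and sometimes must, shrink the reducing subalgebra. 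Note also that the case $G=SO(V)$, $n_1$ odd, $n_1-n_2=2$, which you defer as bookkeeping, is where the paper must do genuine work: there $\fg^s=\so(V_1\oplus V_2)$, the projection of $e$ is not principal, and its regular semisimple type is established by exhibiting an explicit regular semisimple cyclic element for the partition $(2m+1,2m-1)$ in $\so_{4m}$; your ``parity-adjusted'' characteristic-polynomial computation would still have to be carried out there.
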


\begin{proof}
In what follows the theorem will be proved in seven possible cases:
one for $G=SL(V)$, two for $G=Sp(V)$, and four for $G=SO(V)$.
The subalgebra $\fg^s$ will be explicitly described in terms of
the decomposition (\ref{eq:1}). The property 1) of the definition
of a reducing subalgebra will be clear from this description.
Moreover, in all the cases but one $e^s$ will be a principal nilpotent
element of $\fg^s$ and thereby a nilpotent element of regular
semisimple type in $\fg^s$.

\section*{Case $G=SL(V)$.}

Let $n_1 = \cdots = n_r > n_{r+1}$ (assuming $n_{p+1}=0$).  Set
\begin{equation*}
\tilde{V} = V_1 \oplus \cdots \oplus V_r \,, \quad  \tilde{\fg}
    = \fsl (\tilde{V})\,.
\end{equation*}
Clearly, $\fg_{-d} = \tilde{\fg}_{-d}$. Replacing $\fg$ by $\tilde{\fg}$
and the linear operators
$e,h,f$ by their restrictions to $\tilde{V}$, one can reduce
the proof to the case, when $r=p$, i.e. all Jordan blocks of $e$
are of the same size.

Under this condition, the $\fs$-module $V$ can be represented
as
\begin{equation*}
  V=V_0 \otimes R \, ,
\end{equation*}
where $V_0$ is a simple $\fs$-module of dimension $n_1$, while
$R$ is a trivial $\fs$-module of dimension $p$.  In these
terms we have

\alphaparenlist
\begin{enumerate}
\item 
$e=e_0 \otimes 1$, where $e_0$ is a principal nilpotent element
of $ \fsl (V_0)$;

\item 
$h=h_0 \otimes 1$, where $h_0$ is a characteristic of $e_0$ in
$\fsl (V_0)$;

\item 
$\fg = \fsl (V_0) \otimes \gl (R)$;

\item 
$\fg_{-d} = \fsl (V_0)_{-d} \otimes \gl (R)$, where the
grading of $\fsl (V_0)$ is defined by $h_0$.

\end{enumerate}

The group $Z (\fs) $ is a finite extension of $1\otimes SL(R)$
(the intersection of $1\otimes GL(R)$ with $SL(V)$). It acts
on $\fg_{-d}$ by conjugations of the second tensor factor.
In this way, a generic element of $\gl (R)$ can be put in a
diagonal form, which means that the subalgebra
\begin{equation*}
  \fg^s = \fsl (V_1) \oplus \cdots \oplus \fsl (V_p) \subset \fg
\end{equation*}
satisfies property 2) of a reducing subalgebra.

\begin{remark}
  A cyclic element $e + F_0 \otimes A$ is semisimple if and only
  if $A$ is semisimple and non-degenerate.
\end{remark}

\section*{Case $G=Sp (V)$, $n_1$ even.}

As in the previous case, the proof reduces to the case, when all
Jordan blocks of $e$ have the same size.  Under this condition,
the symplectic $\fs$-module $V$ is represented as
\begin{equation*}
  V= V_0 \otimes R\, ,
\end{equation*}
where $V_0$ is a simple symplectic $\fs$-module of dimension
$n_1$, while $R$ is a quadratic vector space of dimension $p$, on
which $\fs$ acts trivially.  We also have

\alphaparenlist
\begin{enumerate}
\item 
 $e=e_0 \otimes 1$, where $e_0$ is a principal
 nilpotent element of $\fsp (V_0)$;

\item 
$h = h_0 \otimes 1$ where $h_0$ is a characteristic of $e_0$;

\item 
$\fg =\fsp (V_0)\otimes\so^+ (R)+\fsp^+(V_0)\otimes\so (R)$;

\item 
$\fg_{-d} =\fsp (V_0)_{-d}\otimes\so^+(R)$.
\end{enumerate}

The group $Z(\fs)\simeq O(R)$ acts on $\fg_{-d}$ by
conjugations of the second tensor factor.  In this way, a generic element of $\so^+(R)$ can  be put in a diagonal form in an
orthogonal basis.  Assuming that $V_1,\ldots,Vp$ are mutually
orthogonal, we thus see that
\begin{equation*}
  \fg^s =\fsp (V_1) \oplus\cdots\oplus\fsp (V_p)\subset\fg
\end{equation*}
satisfies property 2) of a reducing subalgebra.

\section*{Case $G=Sp (V)$, $n_1$ odd.}

The proof reduces to the case, when $n_1=\cdots=n_{2r}$ with
$2r=p$. Under this assumption,
\begin{equation*}
  V=V_0\otimes R\, ,
\end{equation*}
where $V_0$ is a simple quadratic $\fs$-module of dimension
$n_1$, while $R$ is a symplectic vector space of dimension $2r$,
on which $\fs$ acts trivially.  We have

\alphaparenlist

\begin{list}{}{}
\item (a)  $e=e_0 \otimes 1$, where $e_0$ is a principal
  nilpotent element of $\so (V_0)$;

\item (b)  $h=h_0 \otimes 1$, where $h_0$ is a characteristic of
  $e_0$;

\item (c)  $\fg =\so^+(V_0)\otimes\fsp (R)+\so (V_0)\otimes\fsp^+(R)$;

\item (d)  $\fg_{-d}=\so^+(V)_{-d}\otimes\fsp (R)$.
\end{list}

The group $Z(\fs)\simeq Sp (R)$ acts on $\fg_{-d}$ by conjugations
of the second tensor factor. In this way, a generic element
of $\fsp(R)$ can be put in a diagonal form in a symplectic
basis.  Assuming that the summands of (\ref{eq:1}) are grouped in
pairs of dual isotropic subspaces $V_1, V^*_1, \ldots , V_r, V^*_r$
so that their sums are mutually orthogonal, we come to the conclusion
that the algebra
\begin{equation*}
  \fg^s =\fsl (V_1)\oplus\cdots\oplus\fsl (V_r)
\end{equation*}
naturally embedded in $\fsp (V)$, satisfies property 2) of a reducing
subalgebra.

\section*{Case $G=SO(V)$, $n_1$ even.}

The proof goes as in the previous case, but this time both $V_0$
and $R$ are symplectic vector spaces, and
$\fg_{-d}=\fsp (V_0)\otimes\fsp^+(R)$.  Assuming that the summands
of (\ref{eq:1}) are grouped in pairs as in the previous case (but
with respect to the symmetric scalar product in $V$), the subalgebra
$\fg^s$ is defined in the same way.

\section*{Case $G=SO(V)$, $n_1$ odd, $n_1=n_2$.}

As in the previous cases, one may assume that $n_1=n_2=\cdots=n_p$.
Under this assumption,
\begin{equation*}
  V=V_0 \otimes R\, ,
\end{equation*}
where $V_0$ is a simple $\fs$-module of dimension $n_1$, while $R$ is a
quadratic vector space of dimension $p$,
on which $\fs$ acts trivially. We have

\alphaparenlist
\begin{enumerate}
\item 
$e=e_0 \otimes 1$, where  $e_0$ is a principal nilpotent element
of $\so (V_0)$;

\item   
$h=h_0\otimes 1$, where $h_0$ is a characteristic of $e_0$;

\item   
$\fg =\so^+(V_0)\otimes\so (R)+\so (V_0)\otimes\so^+(R)$;

\item   
$\fg_{-d}=\so^+(V)_{-d}\otimes\so (R)$.
\end{enumerate}

The group $Z(\fs)\simeq SO(R)$ acts on $\fg_{-d}$ by
conjugations of the second tensor factor.

If $p=2r$, then a generic element of $\so (R)$ can be put in a
diagonal form in a basis, $(e_1,e'_1,e_2,e'_2, \ldots ,e_z,
e'_z)$ such that $(e_i, e'_i) =1$, while all the other pairs of
basis vectors are orthogonal.  Assuming that the summands of
(\ref{eq:1}) are grouped in pairs of dual isotropic subspaces
$V_1,V^*_1, \ldots , V_r,V^*_r$, so that their sums are mutually
orthogonal, we see that the algebra
\begin{equation*}
  \fg^s = \fsl (V_1)\oplus \cdots \oplus \fsl (V_r)
\end{equation*}
naturally embedded in $\fg$, satisfies property 2) of the definition
of a reducing subalgebra.

The case $p=2r+1$ is a bit different.  Here we have one extra
basis vector $e_p$ of $R$ with $(e_p,e_p)=1$, orthogonal to
all the other basis vectors, and, when putting an element of
$\so (R)$ in a diagonal form, the $p$\st{th} diagonal entry is always
zero. So if we assume that the first $2r$ summands of (\ref{eq:1})
are grouped in pairs as above, while $V_p$ is orthogonal to all of
them, then the subalgebra $\fg^s$ defined as above still satisfies
property 2) of the definition of a reducing subalgebra.

In both cases the projection of $e$ to $\fg^s$ is a principal
nilpotent element of $\fg^s$.

\section*{Case $G=SO(V)$, $n_1$ odd, $n_1-n_2=2$.}

Let $n_2 = \cdots = n_r > n_{r+1}$.  Set
\begin{equation*}
\tilde{V} = V_1 \oplus V_2 \oplus \cdots \oplus V_r\, , \quad
  \tilde{\fg}=\so (\tilde{V})\, .
\end{equation*}
Then $\fg_{-d} = \tilde{\fg}_{-d}$, so the proof reduces to the
case, when $r=p$.

Under this condition, the quadratic $\fs$-module $V$ is
represented as
\begin{equation*}
  V=V_1 \oplus (V_0 \otimes R)\, ,
\end{equation*}
where $V_0$ is a simple quadratic $\fs$-module of dimension
$n_2$, while $R$ is a quadratic vector space of dimension $p-1$,
on which $\fs$ acts trivially.  We have

\alphaparenlist
\begin{enumerate}
\item 
$e=e_1\oplus (e_0\otimes 1)$, where $e_1$ (resp. $e_0$) is a
principal nilpotent element of $\so (V_1)$ (resp. $\so (V_0)$);

\item 
$h=h_1 \oplus (h_0\otimes 1)$, where $h_1$ (resp. $h_0$) is a
characteristic of $e_1$ (resp. $e_0$);

\item 
$\fg=\so (V_1)\oplus (V_1\otimes V_0\otimes R)\oplus\so (V_0\otimes R)$,
where the second summand is understood as the space of skew-symmetric
linear operators taking $V_1$ to $V_0\otimes R$ and $V_0\otimes R$ to
$V_1$;

\item 
$d=2n_1-4$ and
\begin{equation*}
  \fg_{-d}=\so (V_1)_{-d}\oplus (V_1(-n_1+1)\otimes V_0(-n_1+3)\otimes R)\,,
\end{equation*}
where $V_1(\lambda)$ (resp. $V_0(\lambda)$) denotes the eigenspace
of $h_1$ (resp. $h_0$) of the eigenvalue $\lambda$.
\end{enumerate}

Note that the spaces $\so (V_1)_{-d}$, $V_1(-n_1+1)$, $V_0(-n_1+3)$
are one-dimensional.  The unity component of $Z(\fs)$ acts
trivially on the first summand of $\fg_{-d}$ and by unimodular
orthogonal transformations of $R$ on the second summand.  Any
non-isotropic vector of $R$ can be taken to a vector of a fixed
non-isotropic line by such a transformation. This means that the
subalgebra
\begin{equation*}
  \fg^s=\so (V_1\oplus V_2)
\end{equation*}
(assuming that $V_2$ is non-degenerate) satisfies property 2)
of the definition of a reducing subalgebra. The projection of $e$
to $\fg^s$ is not a principal nilpotent element of $\fg^s$, but
we shall show that it is of regular semisimple type in $\fg^s$.

Let $n_1=2m+1,\,n_2=2m-1$. Changing the notation, assume that $V$
is a $4m$-dimensional quadratic vector space with a basis
$\{e_i|i\in\ZZ/4m\ZZ\}$ such that $(e_i,e_{-i})=1$, while all the
other pairs of basis vectors are orthogonal. Let $A_i=-A_{-i-1}$
be the linear operator, taking $e_i$ to $e_{i+1}$,
$e_{-i-1}$ to $-e_{-i}$, and all the other basis vectors to $0$.
It is easy to see that it is skew-symmetric. The operator
$$
A=A_0+A_1+...+A_{2m-1}
$$
cyclically permutes the pairs $\pm e_0,\,\pm e_1,...,\,\pm e_{4k-1}$
and, hence, is a regular semisimple element of $\so (V)$.

The space $V$ decomposes into the orthogonal sum $V=V_1\oplus V_2$,
where $V_1$ (resp. $V_2$) is spanned
by $e_{3m},\,e_{3m-1},...,\,e_{m+1},\,e_m$
(resp. by $e_{m-1},\,e_{m-2},\,...,\,e_{-(m-2)},\,e_{-(m-1)}$).
Correspondingly, the operator $A$ decomposes as $A=e+F$, where
$F=A_{m-1}$ and
$$
e=(A_m+A_{m+1}+...+A_{2m-1})+(A_0+A_1+...+A_{m-2})
\in\so(V_1)\oplus\so(V_2)\subset\so(V)
$$
is a nilpotent element of type $(2m+1,2m-1)$ and of depth $d=4m-2$.

A characteristic of $e$ is the operator $h$ multiplying $e_i$ by
$2i$ for $i=m-1,\,m-2,...,\,-(m-2),\,-(m-1)$ and by $2i-4m$
for $i=3m,\,3m-1,...,\,m+1,\,m$. It follows that
$$
F=A_{m-1}\in\so (V)_{-d}.
$$
Thus, $e$ is a nilpotent element of regular semisimple type
in $\so (V)$.

\section*{Case $G=SO(V)$, $n_1$ odd, $n_1-n_2 >2$ or $p=1$.}

In this case $d=2n_1-4$ and $\fg_{-d}=\so(V_1)_{-d}$, so one can take
$\fg^s=\so (V_1)$.  The projection of $e$ to $\fg^s$ is a principal
nilpotent element of $\fg^s$.

\end{proof}

The description of $\fg^s$ given in the previous proof implies that
in all the cases but one the semisimple part $\fg^n$ of the
centralizer of $\fg^s$ is a classical linear Lie algebra of the same
type as $\fg$. The exceptional case is $G=SO(V),\, n_1=2$, when, if
the multiplicity of $n_1$ is $2r$, $\fg^n$ is $\so_{n-4r}$ plus the
sum of $r$ copies of $\sl_2$; but the projection of $e$ to $\fg^n$
still lies in $\so_{n-4r}$.

In all the cases but two the type of the projection $e^n$ of $e$
to $\fg^n$ is obtained from that of $e$ by deleting all the maximal
parts. The exceptional cases are $G=SO(V),\,n_1$ odd of multiplicity
$2r+1$, when one should only delete $2r$ maximal parts, and
$G=SO(V),\,n_1$ odd, $n_1-n_2=2$, when one should delete $n_1$ and
$n_2$.

In the following theorem, we retain the preceding notation and
the assumptions of Theorem 4.1. In particular, $(n_1,\ldots,n_p)$
is the partition corresponding to the nilpotent element $e\in\fg$.

\begin{theorem}
\label{th:4.3}
1) If $G=SL(V)$ or $Sp(V)$, then $e$ is of semisimple type
if and only if  the corresponding partition is of the form
$(n_1,\ldots,n_1,1,\ldots,1)$, and of regular semisimple type
if and only if $1$ occurs at most once.

2) If $G=SO(V)$, then $e$ is of semisimple type if and only if one
of the following three possibilities holds:

\alphaparenlist

\begin{enumerate}
\item 
$n_1$ has even multiplicity and all the other parts are $1$;
\item 
$n_1=2m+1, n_2=2m-1\,(m\geq 1)$ and all the other parts are $1$;
\item 
$n_1\geq 5$ and all the other parts are $1$.
\end{enumerate}

\noindent Moreover, $e$ is of regular semisimple type if and
only if $n_1$ is odd and $1$ occurs at most twice in case (a),
$p\leq 4$ in case (b), and $p\leq 2$ in case (c).
\end{theorem}

\begin{proof}
Let $\fg^s$ be the reducing subalgebra constructed in the proof
of Theorem 4.1. It follows from Proposition 3.8 that the element $e$
is of semisimple type if and only if it lies in $\fg^s$. The explicit
description of $\fg^s$ given in the proof of Theorem 4.1 in the seven
considered cases, permits to determine whether $e$ is of semisimple type,
in terms of the corresponding partition.

Assume now that $e\in\fg^s$. Then automatically $\fs\subset\fg^s$.
With one exception described below, denote by $V'$ the sum of all
one-dimensional components of the decomposition (\ref{eq:1}) and
by $V^s$ the sum of all the other components. The exceptional case
is the nilpotent element of type $(3,\,1,\ldots,1)$ in $\so(V)$,
when we set $V^s=V_1+V_2$ and $V'=V_3+\ldots+V_p$, assuming $V_2$ to
be orthogonal to all the other one-dimensional components.

In all the cases, denote by $\fg(V^s)$ (resp. $\fg(V')$) the classical
linear Lie algebra of the same type as $\fg$ acting on $V^s$ (resp. on
$V'$). Clearly, $\fg^s\subset\fg(V^s)$. Since a generic cyclic element
$e+F$ is $Z(\fs)$-conjugate to an element of $\fg^s$, one may assume
that it lies in $\fg^s$. Then its centralizer contains $\fg(V')$.
This implies that $e$ may be of regular semisimple type only if the
algebra $\fg(V')$ is commutative, which means that $\dim V'\leq 1$ in
the cases $G=SL(V)$ or $Sp(V)$ and $\dim V'\leq 2$ in the case $G=SO(V)$.

Moreover, in subcase (c), if $\dim V'=2$, then $\fg(V^s)=\so(V^s)$
does not contain regular semisimple elements of $\fg=\so(V)$, so $e$
cannot be of regular semisimple type.

Note that the eigenvalues of any semisimple cyclic element associated
with a principal nilpotent element of $\fsl_m$, are the $m$-th roots
of some non-zero number. In particular, if $m$ is even, they decompose
into pairs of opposite numbers. It follows that such an element is not
regular in $\so_{2m}$, where $\fsl_m$ is embedded in the natural way.
This implies that in subcase (a), if $n_1$ is even, a semisimple cyclic
element associated with $e$ cannot be regular in $\fg=\so(V)$.

It remains to check that in all the other cases the element $e$ is of
regular semisimple type. For $G=SL(V)$, for $G=Sp(V)$ and $n_1$ odd,
and for $G=SO(V)$ in subcase (a), this follows from the above
description of the eigenvalues of semisimple cyclic elements associated
with principal nilpotent elements of $\fsl_m$. For $G=Sp(V)$ and $n_1$
even, this follows from the analogous description of the eigenvalues
of semisimple cyclic elements associated with principal nilpotent
elements of $\fsp_m$, which are also the $m$-th roots of some non-zero
number.

For a nilpotent element of type $(2m+1,2m-1)$ in $so_{4m}$, the
eigenvalues of the associated semisimple cyclic element constructed
in the proof of Theorem 4.1 are $4m$-th roots of some number. This
implies that in subcase (b), the element $e$ is of regular semisimple
type, provided $\dim V'\leq 2$.

Finally, for a principal nilpotent element of $\so_{2m+1}$, the
eigenvalues of any associated semisimple cyclic element are $2m$-th
roots of some number and $0$. This implies that in subcase (c), the
element $e$ is of regular semisimple type,
provided $\dim V'\leq 1$.
\end{proof}

The rank of a nilpotent element $e$ of even depth in a classical Lie
algebra $\fg$ is determined as follows. In all the cases when the
projection of $e$ to $\fg^s$ is a principal nilpotent element of
$\fg^s$, the rank $\rk$$e$ is equal to the number of simple factors
of $\fg^s$. In the exceptional case, when $G=SO(V)$, $n_1$
odd, $n_1-n_2=2$, one has $\rk \,e=2$.

Note also that a bush of a semisimple type element $e$ in a classical Lie
algebra consists of all nilpotents, corresponding to (admissible) partitions,
obtained from the partition, associated with $e$, by replacing the set of
its $1$'s by arbitrary parts strictly smaller than the largest part,
and also, in case $G=SO(V)$ and $n_1$ of odd multiplicity $>1$, one of the
replacing parts can be $n_1$.

Finally, the minimal semisimple Levi subalgebra, containing a nilpotent element
$e$ of a classical Lie algebra $\fg$ is a reducing subalgebra, except for the
following cases:

(a) $\fg =\fs\fp_n$, $n_1$ even, $n_1=n_2$;

(b) $\fg = \fs\fo_n$, $n_1$ odd, $n_1-n_2=2$, and the multiplicity of $n_2$
is even.

\vspace{3ex}

\section{Nilpotents of semisimple and mixed type in exceptional Lie algebras}
\label{sec:5}
\vspace{-.5ex}

For all exceptional simple Lie algebras $\fg$ we list in Tables
5.1-5.4 below all bushes of conjugacy classes of
nilpotent elements of even depth. In the first row of a bush we give the type
of the conjugacy class of the nilpotent of semisimple type, and in the rest
of the rows of the bush that of mixed type. All nilpotent conjugacy classes 
of a bush have the
same depth $d$ and rank $r$, the same derived subalgebra
$\fa$ of the centralizer of the semisimple part of a generic cyclic element
$e+F$, the same minimal reducing subalgebra $\fg^s$ and the semisimple part
of the centralizer $\fg^n$ of $\fg^s$ in $\fg$, and the same projection $e^s$ 
of $e$ on $\fg^s$ (for some $e$ in the conjugacy class).

Nilpotent conjugacy classes in
a bush differ only by their projection $e^n$ on $\fg^n$, listed in the next to
last column. All subalgebras $\fa$, $\fg^s$ and $\fg^n$ are semisimple
subalgebras of $\fg$. In almost all cases their conjugacy class is
determined by their types, listed in the tables. The only exceptions are
$\fg=E_7$, when $[A_5]'$,
$[A_3+A_1]'$,
and $[3A_1]'$ (resp. $[A_5]''$, $[A_3+A_1]''$,  and $[3A_1]''$) denote the
semisimple Levi subalgebras, contained (resp. not contained) in $A_7$,
and $\fg=G_2$ and $F_4$, when
$A_1$ and $A_2$ (resp. $\tilde{A}_1$ and $\tilde{A}_2$) denote the semisimple
Levi subalgebras, whose roots are long (resp. short).

Since $\fa$ is a semisimple Levi subalgebra, for a nilpotent $e$ of
semisimple type we have:
$$
\dim \,\fz_{\fg}{e+F}=\dim \, \fa + \rank \,\fg -\rank \,\fa.
$$
Also, such $e$ is of regular semisimple type iff $\fa=0$.

In the last column we list the unity component of the linear algebraic
group, which is the image of the action of $Z(\fs)$ on $\fg_{-d}$.
Here $n$ denotes the trivial linear group acting on the $n$-dimensional
vector space, $G_2$ and $F_4$ denote the $7$- and $26$-dimensional
representations respectively of these exceptional algebraic groups,
and $\oplus$ denotes the direct sum of linear algebraic groups.

\begin{table}[h]\label{tab:5.1}
  \centering
  Table 5.1.  Nilpotent orbits of semisimple and mixed type in $E_6$\\[1ex]
  \begin{tabular}{c|c|c|c|c|c|c|c|c}
\hline
 $e$ &  $d$ & $r$ &$\fa$ & $\fg^s$ &$e^s$ &$\fg^n$& $e^n$ &$Z(\fs)^0|\fg_{-d}$\\
\hline
    $A_1 $ & 2 & 1&$A_5$   & $A_1$  & $2$ & $A_5$ & 0  & 1\\
\hline
$2A_1$& $2$ &2& $A_3$&$2A_1$  & $(2;2)$  & $A_3$ & $0$ &$SO_7\oplus 1 $\\
\hline
$A_2$& 4& $1$ & $2A_2$ & $A_2$ & $ (3)$  &  $2A_2$   &  $0$ &1 \\
$A_2 +A_1$& &&&  &  & & $(2,1;1^3)$   & 1 \\
$A_2 +2A_1$& &&&  &  & & $(2,1;2,1)$ & $SO_3$ \\
\hline
$2A_2$ &4&2&$A_2$&$2A_2$&(3;3)&$A_2$&0 &$G_2 \oplus 1$\\
\hline
$A_3$  & 6 & 1 & $A_3$ &$A_3$&(4) &$2A_1$ & 0& $1$\\
$A_3+A_1$  &  &  & & &  &&  $(2;1^2)$ &$1$ \\
\hline
$D_4 (a_1)$  & $6$ & 2 & 0&$D_4$  & (5,3))&0&0 &$2$\\
\hline
$A_4$ &8& 1&  $A_1$ &$A_4$&(5)&$A_1$  &0 &$1$\\
$A_4 +A_1$&&& &&& &(2) &$1$\\
\hline
$D_4$ & $10$& $1$ &$ 2A_2$ & $B_3$& (7) &$A_1$ &0 &$1$\\
$D_5 (a_1)$&&& &&&&(2)&$1$\\
\hline
$A_5$ &10 & 1 & $A_2$ &$A_5$&(6)&$A_1$   &  $0$ &$1$  \\
\hline
$E_6 (a_3)$ & 10 & 2 &0   & $F_4 $ & $F_4 (a_2)$ & 0 & 0 &$2$ \\
\hline
 $D_5$ &14 & 1 &0 &  $B_4$ &  $(9)$ & 0 &0 &$1$ \\
\hline
$E_6(a_1) $ & 16 & 1 & 0 & $E_6$ &$E_6 (a_1)$ & 0 &0 &$1$   \\
\hline
$E_6$ & 22 & 1 & 0  & $F_4$ & $F_4$  &0&0 &$1$\\
\hline
  \end{tabular}
\end{table}
The procedure of computing the entries of Tables 5.1--5.4 is as follows.

First, we find the depth $d$ by Remark \ref{rem:1.2}, using the list of
characteristics from \cite{D} (see also \cite{E}).

Second, we compute the last column and deduce from it the value of the rank
$r$. By Remark \ref{rem:3.6}, the representation of $\fz(\fs)$ on
$\fg_{-d}$
is trivial if
$\dim \fg_{-d}=1$
or $2$, hence in these cases $r=\dim \fg_{-d}$. Also, if $e$ is
distinguished, then $\fz(\fs)=0$, and again $r=\dim \fg_{-d}$.
The remaining cases are few (three in $E_6$ and $F_4$, nine in $E_7$,
and eleven in $E_8$).
The reductive Lie algebras $\fz(\fs)$
were computed in \cite{E}, and it is straightforward to compute their
representations on $\fg_{-d}$; the number $r$ of generating invariants
in all these cases is well known.

Third, by Remark \ref{rem:3.5}, we now know from the last column all
nilpotent elements $e$, for which
the minimal semisimple Levi subalgebra $\fl$, containing $e$, is a
reducing subalgebra. Then, of course, $e$ is distingushed in $\fl$.
We see that there are only five nilpotent conjugacy classes in all the
exceptional Lie algebras when this is not the case: nilpotent classes of type
$A_3 + A_2$ and $A_3+A_2+A_1$ in $E_7$ and $E_8$, and nilpotent class of type
$\tilde{A}_1$ in $F_4$.
For $E_7$ there is only one minimal regular
subalgebra, different from $\fl$, containing these $e$, namely
$D_4 + 2A_1$ \cite{D}, and we check that the summand $D_4$ is a reducing
subalgebra. The same $D_4$ is also a reducing subalgebra for nilpotent class of
the same type in $E_8$. Hence all these four nilpotent classes are of mixed 
type. In the case of the nilpotent class $\tilde{A}_1$ in $F_4$ the only other
minimal regular subalgebra, containing $e$, is $A_1\oplus A_1$ \cite{D},
and it is easy check that it is a reducing subalgebra, in which $e$ is
of semisimple type. Hence, by Theorem \ref{th:3.4}, this nilpotent class 
in $F_4$ is of semisimple type.

\newpage

\begin{table}[!h]\label{tab:5.2}
  \centering
  Table 5.2.  Nilpotent orbits of semisimple and mixed type in $E_7$\\[1ex]
  \begin{tabular}{c|c|c|c|c|c|c|c|c}
\hline
 $e$ &  $d$ & $r$ &$\fa$ & $\fg^s$ &$e^s$ &$\fg^n$& $e^n$&$Z(\fs)^0|\fg_{-d}$\\
\hline
    $A_1 $ & 2 & 1& $D_6$& $A_1$   & $(2)$ & $D_6$ & 0 & 1 \\
\hline
$2A_1$& 2 &2& $D_4 \oplus A_1$ & $2A_1$  & $(2;2)$  & $D_4 \oplus
A_1$ & $0$ & $SO_9 \oplus 1$ \\
\hline
$[3A_1]''$&2 &3&$D_4$ & $[3A_1]''$ &(2;2;2)  &$D_4$ & 0 & $F_4 \oplus 1$\\
\hline
$A_2$ & 4 & 1 &$[A_5]''$& $A_2$ & (3) & $[A_5]''$& 0 & 1\\
$A_2 +A_1$&  &  &&  &  &&  $(2,1^4)$  & 1 \\

$A_2 +2A_1$&  &  &&  &  &&  $(2^2,1^2)$ & $SO_3$\\

$A_2+3A_1$ & & &&&&&$(2^3)$ & $G_2$\\

\hline

$2A_2$  & 4 & 2 &$A_2$ & $2A_2$ &(3;3) &$A_2$ & 0 & $G_2 \oplus SO_3$\\

\hline

$A_3$  & 6 &1  & $D_4 \oplus A_1$& $A_3$& (4)  &$[A_3\oplus A_1]''$
&0 &1\\
$[A_3+A_1]'$ &&& &&&& $(2,1^2;1^2) $& 1\\
$[A_3 +A_1]''$ &&&& &&& $(1^4;2)$ & 1\\
$A_3 +2A_1$ &&&&&&& $(2,1^2;2)$ & 1\\

\hline

$D_4 (a_1)$  & 6 & 2 & $[3A_1]''$&  $D_4$  & (5,3)& $[3A_1]''$ &0
& 2\\
%
$D_4 (a_1)+A_1$ &&&&&&& $(2;1^2;1^2)$ & 2\\

$A_3 + A_2 $ &&&&&&& $(2;2;1^2)$ & $T_1 \subset SO_3$\\

$A_3 + A_2 +A_1$ &&&&&&& (2;2;2) & $S^2 SO_3/1$\\
\hline
$A_4$ & 8 & 1 &$A_2$& $A_4$ & (5)& $A_2$ &0 & 1\\
$A_4+A_1$ &&&&&&& (2,1) & 1\\
$A_4 + A_2$ &&&&&&& (3) & $SO_3$\\
\hline
$D_4$ & 10 & 1 & $[A_5]''$& $D_4$ & (7,1) & $[3A_1]''$ &0 & 1\\
$D_4 + A_1$ &&&&&&& $(2;1^2;1^2)$ &1\\
$D_5 (a_1)$ &&&&&&& $(2;2;1^2)$ & 1 \\
$D_5 (a_1) + A_1 $ &&&&&&& (2;2;2) & 1\\
\hline
$[A_5]'$ & 10 & 1 & $A_2 \oplus 3A_1$& $[A_5]'$ & (6) & $A_1$ &0 &1\\
\hline
$[A_5]''$ & 10 & 1 & $D_4$& $[A_5]''$ &(6) & $A_2$ &0 &1\\
$A_5 + A_1$ &&&&&&& (2,1) & 1\\
 \hline
$D_6 (a_2)$ & 10 & 2 & $A_2$& $D_6$ & (7,5) & $A_1$ &0 &2 \\
\hline
$E_6 (a_3)$ & 10 & 2 &$[3A_1]''$  & $F_4$ & $ F_4 (a_2)$ & $A_1$
&0 &2\\
 \hline
$E_7 (a_5)$ & 10 & 3 &0& $E_7$ & $E_7 (a_5)$ &0&0 &3\\
\hline
$A_6$ & 12 & 1 &0& $A_6$ & (7) &0&0 & $SO_3$\\
\hline
$D_5$ & 14 & 1  &$[3A_1]''$ & $B_4$ & (9) & $2A_1$ & 0 & 1\\
$D_5 +A_1$ &&&&&&& $(2;1^2)$ & 1 \\
$D_6 (a_1)$ &&&&&&& $(1^2;2)$ & 1  \\
$E_7 (a_4)$ &&&&&&& (2;2) & 1\\
\hline
$E_6 (a_1)$ & 16 & 1 &0& $E_6$ &$ E_6 (a_1)$ &0&0  & 1\\
\hline
$D_6$ & 18 & 1 &$A_2$& $D_6$ & (11,1) & $A_1$ &0 & 1 \\
$E_7 (a_3)$ &&&&&&& (2) & 1\\
\hline
$E_6 $ & 22 & 1  &$[3A_1]''$ & $F_4$ & $F_4$ & $A_1$& 0  & 1\\
$E_7 (a_2)$ &&&&&&& (2)& 1\\
\hline
$E_7 (a_1)$ & 26 & 1 &0& $E_7$ & $E_7 (a_1)$ &0&0 & 1\\
\hline
$E_7$ & 34 & 1 &0& $E_7$& $E_7$ &0&0 & 1\\
 \hline
  \end{tabular}
\end{table}

\newpage

\begin{table}[!h]\label{tab:5.3}
  \centering
  Table 5.3.  Nilpotent orbits of semisimple and mixed type in $E_8$\\[1ex]
  \begin{tabular}{c|c|c|c|c|c|c|c|c}
\hline
 $e$ &  $d$ & $r$ &$\fa$ & $\fg^s$ &$e^s$ &$\fg^n$& $e^n$&$Z(\fs)^0|\fg_{-d}$ \\
\hline
$A_1$ & 2 & 1  &$E_7$& $A_1$ & 2 & $E_7$ & 0 &1\\
\hline
$2A_1$ &2 & 2 &$D_6$& $2A_1$ & (2;2) & $D_6$ & 0 & $SO_{13} \oplus 1$\\
\hline
$A_2$ & 4 & 1 &$E_6$& $A_2$ & (3) & $E_6$ &0 &1\\
$A_2 + A_1$ &&&&&&& $A_1$  & 1\\
$A_2 + 2A_1$ &&&&&&& $2A_1$  & $SO_3$\\
$A_2 + 3A_1$ &&&&&&& $3A_1$   & $G_2$\\
\hline
$2 A_2$ & 4 & 2 &$2A_2$& $2A_2$ & (3;3)& $2A_2$ &0  & $G_2 \oplus  G_2$\\
\hline
$A_3$ & 6 & 1 &$D_6$& $A_3$ & (4) & $D_5$ &0 &1\\
$A_3 + A_1$ &&&&&&& $(2^2,1^6)$ & 1\\
$A_3+2A_1$ &&&&&&& $(2^4,1^2)$ &  1\\
\hline
$D_4 (a_1)$ & 6 & 2 &$D_4$& $D_4$ & (5,3) & $D_4$ &0 &2\\
$D_4 (a_1) + A_1$ &&&&&&& $(2^2,1^4)$ &2\\
$A_3 + A_2$ &&&&&&& $(2^4)$ & $T_1 \subset SO_3$\\
$A_3 + A_2 + A_1$ &&&&&&& $(3,2^2,1)$ & $S^2 SO_3 /1$\\
$D_4 (a_1) + A_2$ &&&&&&& $(3^2,1^2)$ & $Ad(SL_3)$\\
\hline
$A_4 $ & 8 & 1 &$A_4$& $A_4$ & (5) & $A_4$&0 &1\\
$A_4 + A_1$&&&&&&& $(2,1^3)$ &1\\
$A_4 + 2A_1$ &&&&&&& $(2^2,1)$ &1\\
$A_4 + A_2$ &&&&&&& $(3,1^2)$ &$SO_3$\\
$A_4 +A_2 +A_1$ &&&&&&& (3,2) & $SO_3$\\
\hline
$D_4$ & 10 & 1  &$E_6$& $B_3$ & (7)& $B_4$ & 0 &1\\
$D_4 +A_1$ &&&&&&& $(2,2,1^5)$ &1\\
$D_5 (a_1)$ &&&&&&& $(3,1^6)$ &1\\
$D_5 (a_1)+A_1$ &&&&&&& $(3,2^2,1^2)$ &1\\
$D_4 + A_2$ &&&&&&& $(3^2,1^3)$ &1\\
$D_5 (a_1) + A_2$ &&&&&&& $(3^3)$  &1\\
\hline
$A_5 $ & 10 & 1 & $D_4 \oplus A_2$& $A_5$ & (6) & $A_2 \oplus A_1$ &0 &1\\
$A_5 +A_1$ &&&&&&& $(1^3;2)$ &1\\
\hline
$E_6 (a_3)$ & 10 & 2 &$D_4$& $F_4$& $F_4 (a_2)$ & $G_2$ &0 &2\\
$E_6 (a_3) + A_1$ &&&&&&& $A_1$ &2\\
\hline
$D_6 (a_2)$ & 10 & 2 &$2A_2$& $D_6$ & (7,5) & $2A_1$ & 0 &2\\
\hline
$E_7 (a_5)$ & 10 & 3 &$A_2$& $E_7$ & $E_7 (a_5)$ & $A_1$& 0 &3\\
\hline
$E_8 (a_7)$ & 10 & 4 &0& $E_8$ & $E_8 (a_7)$ &0 &0 & 4\\
\hline

\end{tabular}
\end{table}

\newpage


\begin{table}[!h]
  \centering
  Table 5.3.  Nilpotent orbits of semisimple and mixed type in $E_8$\\
(cont'd.)\\[1ex]
  \begin{tabular}{c|c|c|c|c|c|c|c|c}
\hline
 $e$ &  $d$ & $r$ &$\fa$ & $\fg^s$ &$e^s$ &$\fg^n$& $e^n$&$Z(\fs)^0|\fg_{-d}$ \\
\hline
$A_6$ & 12 & 1 &$A_1$& $A_6$ & (7) & $A_1$ &0 & $SO_3$\\
$A_6 + A_1$ &&&&&&& (2) & $SO_3$\\
\hline
$D_5$ & 14 & 1 &$D_4$& $B_4$ & (9) & $B_3$ &0 &1\\
$D_5 + A_1$ &&&&&&& $(2^2,1^3)$  &  1  \\
$D_6 (a_1)$&&&&&&& $(3,1^4)$  &  1 \\
$D_5 +A_2$ &&&&&&& $(3^2,1)$  &  1 \\
$D_7 (a_2)$ &&&&&&& $(5,1^2)$  &  1 \\
$E_7 (a_4)$ &&&&&&& $(3,2^2)$  &  1 \\
\hline
$E_6 (a_1)$ & 16 & 1 &$A_2$& $E_6$ & $ E_6 (a_1)$ & $A_2$ & 0  &  1  \\
$E_6 (a_1) + A_1$ &&&&&&& (2,1)  &  1 \\
$E_8 (b_6)$ &&&&&&& (3)  &  1 \\
\hline
$D_6$ & 18 & 1 & $A_4$& $B_5$ & (11) & $B_2$ & 0  &  1 \\
$E_7 (a_3)$ &&&&&&& $(2^2,1)$  &  1 \\
$D_7 (a_1)$ &&&&&&& $(3,1^2)$  &  1 \\
\hline
$E_8 (a_6)$ &18 & 2 &0 & $E_8$ & $E_8 (a_6)$ &0&0 & 2\\
\hline
$E_6 $ & 22 & 1 & $D_4$& $F_4$ & $F_4$ & $G_2$ & 0  &  1  \\
$E_6 + A_1$ &&&&&&& $A_1$  &  1 \\
$E_7 (a_2)$ &&&&&&& $\tilde{A}_1$  &  1 \\
$E_8 (b_5)$ &&&&&&& $G_2 (a_1)$  &  1 \\
\hline
$D_7$ & 22 & 1 &$2A_2$& $B_6$ & (13)& $A_1$ & 0  &  1 \\
\hline
$E_8 (a_5)$ & 22 & 2 &0& $E_8$ & $E_8 (a_5)$ &0&0  & 2\\
\hline
$E_7 (a_1)$ & 26 & 1 &$A_1$& $E_7$ & $E_7 (a_1)$ & $A_1$ &0  &  1 \\
$E_8 (b_4) $ &&&&&&& (2)  &  1 \\
\hline
$E_8 (a_4)$ & 28 & 1 &0&  $ E_8$ & $E_8 (a_4)$ &0&0  &  1 \\
\hline
$E_7$ & 34 & 1 &$A_2$& $E_7$ & $E_7$ & $A_1$ & 0  &  1 \\
$E_8 (a_3)$ &&&&&&& (2)  &  1 \\
\hline
$E_8 (a_2)$ & 38 & 1 &0& $E_8$ & $E_8 (a_2)$ &0&0  &  1 \\
\hline
$E_8 (a_1)$ & 46 & 1 &0& $E_8$ & $E_8 (a_1)$ &0&0  &  1 \\
\hline
$E_8$ & 58 & 1 &0& $E_8$ & $E_8$ & 0 & 0  &  1 \\
\hline
  \end{tabular}
\end{table}

\newpage

\begin{table}[!h]\label{tab:5.4}
  \centering
  Table 5.4.  Nilpotent orbits of semisimple and mixed type in $F_4$
  and $G_2$\\[1ex]
  \begin{tabular}{c|c|c|c|c|c|c|c|c|c}
\hline
$\fg$ & $e$ &  $d$ & $r$ &$\fa$ & $\fg^s$ &$e^s$ &$\fg^n$& $e^n$&$Z(\fs)^0|\fg_{-d}$\\

\hline
 $G_2$&   $A_1 $ & 2 & 1&  $\tilde{A}_1$& $A_1$   & $(2)$ & $\tilde{A}_1$ & 0&1  \\
\hline
        & $G_2 (a_1)$ & 4& 1 & 0 & $A_2$ & (3) &0&0 &1\\
\cline{2-10}
 & $G_2$ & 10 & 1 & 0 & $G_2$ & $G_2$ & 0 & 0 &1\\
\hline  
$F_4$ & $A_1$ & 2 & 1 & $C_3$ & $ A_1$ & (2) & $C_3$ &0 &1\\
\hline
   & $\tilde{A}_1$ & 2 & 2 & $B_2$ & $2A_1$ & (2;2) & $2A_1$ & 0
   &$SO_6\oplus 1$\\
\cline{2-10} 
   & $A_2$ & 4 & 1 & $\tilde{A}_2$ & $A_2$& (3) & $\tilde{A}_2$
   & 0 &1\\
   & $A_2 + \tilde{A}_1$ & &&&&&& (2,1)& $SO_3$\\
\cline{2-10}  
 & $\tilde{A}_2$ & 4 & 1 & $A_2$ & $\tilde{A}_2$& (3) & $A_2$  &
 0 & $G_2$\\
\cline{2-10}  
   & $B_2$ & 6 & 1 & $B_2$ & $B_2$ & (5) & $2A_1$ & 0 &1\\
 & $C_3 (a_1)$ &&&&&&& $(2;1^2)$  &1\\
\cline{2-10}   
     & $F_4 (a_3)$ & 6 & 2 & 0 & $D_4$ & (5,3) & 0 & 0  &2\\
\cline{2-10}  
    & $B_3$ & 10 & 1 & $\tilde{A}_2$ & $B_3$ & (7) &0 &0 & 1\\
\cline{2-10}  
    &  $C_3$ & 10 & 1 & $ A_2$ & $C_3$ &(6) & $A_1$ & 0 &1 \\
\cline{2-10}  
   & $F_4 (a_2)$ & 10 & 2 & 0 & $F_4$ & $F_4 (a_2)$ &0&0 &2\\

\cline{2-10}  
   & $F_4 (a_1)$ & 14 & 1 & 0 & $B_4$ & (9) &0&0 &1\\
\cline{2-10}  
    & $F_4$ & 22 & 1 & 0 & $F_4$ & $F_4$ & 0 &0 &1\\
\hline

  \end{tabular}
\end{table}

Now we turn to the nilpotent classes $e$, for which the subalgebra $\fl$ is a
reducing subalgebra. Since $e$ is distinguished in $\fl$, and, by Theorem
\ref{th:3.4}, $e$ is of semisimple type in $\fg$ if and only if it is
in $\fl$, we need to figure out which of the distingushed nilpotent classes
are of semisimple type. For this we use the following lemma.
\begin{lemma}
\label{lem:5.1}
(a) Let $e$ be a distingushed (hence even) nilpotent element of semisimple type
in a simple Lie algebra $\fg$. As described in the introduction, we associate
with $e$ an automorphism $\sigma_e$ of $\fg$ of order $m=\frac{1}{2}d +1$,
where $d$ is the depth of $e$ (recall that for even $e$ we divide the
RHS of  (\ref{eq:0.2}) by 2).
Then $m$ is a regular number of $\fg$, defined in Section 6, and we have:
$|\Delta|=m \dim \fg^\sigma$, where $\Delta$ is the set of roots of $\fg$.

(b) The following is a complete list of distinguished nilpotent conjugacy 
classes of semisimple (hence of regular semisimple \cite{S}) type in all simple
Lie algebras:

(i) regular nilpotent classes;

(ii) subregular nilpotent classes $X(a_1)$ in all exceptional Lie algebras $X$;

(iii) nilpotent classes, corresponding to the partition
$(2k+1, 2k-1, 1)$ in $B_{2k}$, and to the partition $(2k+1,2k-1)$ in
$D_{2k}$ (denoted by $D_{2k}(a_{k-1})$);

(iv) nilpotent classes $F_4(a_2)$, $E_7(a_5)$, $E_8(a_i)$ for $i=2,4,5,6,7$.
\end{lemma}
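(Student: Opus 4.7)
For part (a), the plan is to apply Springer's theory of regular elements in the Weyl group. Since $e$ is distinguished and of semisimple type, by the theorem of Springer \cite{S} cited in the introduction $e$ is in fact of regular semisimple type, so a generic cyclic element $e+F$ is regular semisimple. Let $\fh' := \fz(e+F)$, which is then a Cartan subalgebra of $\fg$. In the convention where the labels $s_i$ have been halved for even $e$, both $e$ and $F$ lie in $\fg^1$ (since $F \in \fg_{-d}$ has halved grade $-d/2$, congruent to $1$ modulo $m$), so $\sigma_e(e+F) = \zeta (e+F)$ with $\zeta$ a primitive $m$\st{th} root of unity. Hence $\sigma_e$ preserves $\fh'$, and the induced automorphism $w_e := \sigma_e|_{\fh'}$ has $e+F$ as a regular vector with eigenvalue $\zeta$. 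By Springer's criterion, $w_e$ is a $\zeta$-regular element of order $m$ in $W = W(\fh')$, so $m$ is a regular number of $\fg$.

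For the identity $|\Delta| = m \dim \fg^\sigma$, we first observe that $\fg^\sigma = \fg_0$: the automorphism $\sigma_e$ fixes the original Cartan $\fh$ pointwise and acts on each $\fg_\alpha$ as multiplication by $\zeta^{\mathrm{grade}(\alpha)}$, where $\mathrm{grade}(\alpha)$ is the $\ad h$-weight on $\fg_\alpha$ (an integer in $\{-(m-1),\ldots,m-1\}$ in the halved convention); the root spaces fixed by $\sigma_e$ are those with $\mathrm{grade}(\alpha) = 0$, so $\dim \fg^\sigma = r + |\Delta_0|$, where $\Delta_0 = \{\alpha : \alpha(h) = 0\}$. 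To prove the identity, we plan to combine Springer's eigenspace dimension formula (the dimension of the $\zeta^k$-eigenspace of $w_e$ on $\fh'$ equals $|\{i : m_i \equiv k \pmod m\}|$, for $m_i$ the exponents of $W$) with the character decomposition
\[
\mathrm{tr}(\sigma_e^k | \fg) = \sum_j \zeta^{jk}\, \dim \fg^j,
\]
and use orthogonality relations to match the two sides. An alternative route is to use the rank identity $\rk e = \dim \fg^2 - \dim G^0 + \dim Z_{G^0}(e+F)$, valid by stability of the action $G^0|\fg^2$, rewriting $\dim Z_{G^0}(e+F)$ via Springer's formula and exploiting the Bala--Carter relation $\dim \fg_0 = \dim \fg_2$ characterising distinguished $e$.

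For part (b), the strategy is case-by-case enumeration of the distinguished nilpotent conjugacy classes in every simple Lie algebra, using Theorem \ref{th:4.3} for the classical types and Tables 5.1--5.4 for the exceptional ones. In the classical cases, the distinguished classes correspond to partitions with all parts distinct of prescribed parity, and Theorem \ref{th:4.3} specifies which such partitions give nilpotents of semisimple type; direct matching shows that the only survivors are the regular classes (in all types), the $B_{2k}$ class of partition $(2k+1,2k-1,1)$, and the $D_{2k}$ class of partition $(2k+1,2k-1)$, yielding (i) and (iii). In the exceptional types, each distinguished class appears in Tables 5.1--5.4 either as the head of a bush (hence of semisimple type) or as a subordinate entry (hence of mixed type); reading off the heads with $\fg^n = 0$ gives precisely the classes in (ii) and (iv), together with the regular classes in (i).

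The main obstacle is the identity $|\Delta| = m \dim \fg^\sigma$ in (a): a conceptual proof requires careful application of Springer's eigenspace dimension theorem together with a delicate analysis of the $\sigma_e$-character contributions from individual root spaces, accounting for the fact that $\sigma_e$ acts trivially on the original Cartan $\fh$ but as the regular element $w_e$ on the Cartan $\fh'$ of $\fz(e+F)$. Part (b), while requiring patient bookkeeping across all simple Lie algebras, is in principle routine given Theorem \ref{th:4.3} and the tables.
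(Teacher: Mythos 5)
Your argument for the first claim of (a) (that $m$ is a regular number) is essentially the paper's: $e+F$ is a regular eigenvector of $\sigma_e$ on the Cartan subalgebra $\fh'=\fz(e+F)$ with a primitive $m$-th root of unity as eigenvalue, so $w_e$ is a regular element of order $m$ by Springer's criterion. But the identity $|\Delta|=m\dim\fg^{\sigma_e}$ is never actually established: you correctly flag it as the main obstacle and then only sketch two possible routes without carrying either out. The paper's route is short: since $w_e$ is regular it is quasiregular (no power $w^s$, for $s$ a proper divisor of $m$, fixes a root), so the cyclic group $\langle w_e\rangle$ acts on the roots of $(\fg,\fh')$ with all orbits of size $m$; each such orbit of root spaces contributes exactly one dimension to $\fg^{\sigma_e}$, whence $\dim\fg^{\sigma_e}=|\Delta|/m+\dim(\fh')^{w_e}$ (Theorem \ref{th:7.6}), and $(\fh')^{w_e}=0$ precisely because $e$ is distinguished (the reductive part of $\fz(e)$ is zero; Remark \ref{rem:7.7}). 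This last input --- distinguishedness forcing $\fh^{w}=0$ --- is absent from your sketch, and without it the identity is false in general (compare the nonzero entries in the last column of Tables 6.1 and 6.2). Your first proposed route, working with the original Cartan $\fh$ and $\fg^{\sigma}=\fg^0$, would require proving that the $\ZZ/m\ZZ$-grading is equidimensional, which is not easier than the statement itself.

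For part (b) the classical half is fine (intersect Theorem \ref{th:4.3} with the distinct-parts characterization of distinguished partitions), but the exceptional half is circular: Tables 5.1--5.4 are compiled \emph{using} Lemma \ref{lem:5.1} --- the text immediately preceding the lemma says that to decide which distinguished classes are of semisimple type ``we use the following lemma'' --- so you cannot read the answer off the tables. The paper instead uses (a) as a sieve: $d/2+1$ must be one of Springer's regular numbers, which eliminates all distinguished exceptional classes except those listed together with $E_8(b_5)$; the latter is then killed by the dimension identity ($\dim\fg^{\sigma_e}=22$, and $m\cdot 22\neq 240$). Sufficiency for the surviving classes comes from Kostant and Springer for the regular, subregular and $E_8(a_2)$ classes, and for the remaining ones in (b)(iv) from the observation that the corresponding $\sigma_e$ are powers of these, still acting without nonzero fixed vectors on $\fh'$ and possessing a regular eigenvector, so the associated theta groups are stable. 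Your proposal contains neither the sieve nor the sufficiency argument, so as written part (b) is not proved.
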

\begin{proof}
Let $e$ be an even nilpotent element of $\fg$ of regular semisimple type,
and let $\sigma_e$ be the corresponding automorphism of order $m$ of
$\fg$, desribed in the introduction.
By the Kostant-Springer construction (see Section 6), the centralizer of
a generic cyclic element $e+F$, associated with $e$, is a Cartan subalgebra
$\fh'$
of $\fg$, on which $\sigma_e$ induces a regular element $w_e$ of the Weyl
group. It has only zero fixed points in $\fh'$, and $e+F$ is its regular
eigenvector whose eigenvalue is $m$-th primitive root $\epsilon$ of 1.
Hence $w_e$ is a
regular element of the Weyl group, and its order
$m$ is a regular number of $\fg$, equal to $d/2 +1$, proving (a).

The classifiation of all distinguished elements of semisimple type in classical
Lie algebras follows from Theorem \ref{th:4.3}.

All regular numbers are listed in \cite{S} (see also Section 6),
and the condition that $d/2 +1$ is a regular number rules out all
distingushed nilpotents in exceptional Lie algebras, which are not listed
in (b), with the exception of the nilpotent $E_8(b_5)$. The latter
is ruled out by the last equation in (a) since in this case
$\dim \fg^{\sigma_e}=22$.

By the results of Kostant \cite{K1} and Springer \cite{S}, the regular and
subregular nilpotents of exceptional Lie algebras are of regular semisimple
type, as well as the subsubregular nilpotent $E_8(a_2)$.

In the remaining cases, listed in (b)iv, the elements $\sigma_e$
are powers of the above, and they still have only zero fixed points
in $\fh'$ and a regular eigenvector with eigenvalue the corresponding power of
$\epsilon$. Hence all the corresponding theta groups are stable, i.e. all these
distinguished nilpotent elements are of regular semisimple type.

Here is a simple proof that the regular nilpotent element $e$
has semisimple type, using the theory of theta groups (which works over any
field). Indeed,
in this case $G^0$ is an r-dimensional torus and a generic cyclic element
is of the form $\sum_{i=0}^r e_{\alpha_i}$
in the notaton of the introduction. Since 0 lies inside the convex hull
of its weights $\alpha_i$, $i=0,...,r$, it follows that the orbit of such
element is closed. A similar proof works, for example, for the subprinciplal
nilpotent classes in exceptional Lie algebras.
\end{proof}

We thus get the list of all nilpotent classes of semisimple type in all 
exceptional
Lie algebras. For each of them we choose as a reducing subalgebra $\fg^s$ the
minimal semisimple Levi subalgebra $\fl$, containing $e$, with the following
exceptions, when we take for $\fg^s$ a smaller subalgebra:
if $e$ is of type $D_n$, $E_6$, $E_6(a_3)$,
$F_4(a_1)$, $F_4(a_3)$, $G_2(a_1)$
in $\fl$, then we take $\fg^s=B_{n-1},\, F_4,\, F_4,\, B_4,\, D_4,\, A_2$
respectively.

Next, for each nilpotent element $e^s$ of semisimple type we
compute the centarizer of a generic cyclic element; its derived subalgebra
is $\fa$. This computation is done on the computer, using the programm
by W. de Graaf \cite{G}.

After that, for each $e^s$ we combine all nilpotent classes, having the same
$d$, $r$, and the conjugacy class of $\fa$, as $e^s$, in one bush.
We compute $\fg^n = \fz(\fg^s)'$, and we check that all nilpotents of mixed
type in one bush lie in $\fg^s \oplus \fg^n$ and have projection $e^s$ on
$\fg^s$.
For computation of $\fg^n$ the following remark is useful.

\begin{remark}
Since the semisimple part of a cyclic element lies in $\fg^s$, it follows that
$\fg^n \subset \fa$. This inclusion is an equality in many cases.
\end{remark}

The type of a nilpotent element $e^s+e^n$ with $e^n\in \fg^n$ is established
by computing the dimension and the reductive part of its centralizer,
using the programm by W. de Graaf \cite{G}.
\begin{remark}
In most of the cases bushes are compatible with inclusions of semisimple Lie
algebras $\fk \subset \fg$ of the same rank. Namely, if $e^s$ is a nilpotent
element of
semisimple type in $\fk$, such that its minimal semisimple Levi subalgebra in
$\fk$ is a reducing subalgebra, then $e^s$ is of semisimple type in $\fg$
as well, and the bush of $e^s$ in $\fk$ is contained in the bush of $e^s$
in $\fg$ (some of the nilpotent orbits of the former combine in one
nilpotent orbit in the latter).
\end{remark}

For example, the bush of the nilpotent class $D_4$ of semisimple type
in $E_8$ comes from this nipotent class in the subalgebra $D_8$,
where it corresonds to the partition $(7,1^9)$. The bush of it
in $D_8$ consists of 7 nilpotent orbits (as described in Section 4),
two of which get joined in a single orbit in $E_8$. As a result, we get
all 6 nilpotent orbits of the bush of the nilpotent class $D_4$ in $E_8$.

Note that for the nilpotent class of type $D_5$ in $E_7$ we have:
$\fg^n=A_1\oplus A_1$.
One of these $A_1$'s corresponds to a root of $E_7$, and its centralizer
in $E_7$ is $D_6$. The latter contains a direct sum of $B_4$ and the other
$A_1$, which does not correspond to a root of $E_7$ (it is of type $2A_1$).

In all these considerations paper \cite{La} was very useful.

\begin{remark}
Let us call a nilpotent element $e$ (and its conjugacy class) 
{\it irreducible} if there is no 
proper reducing subalgebra for it. From the discussion in Section 4 and 
Tables 5.1--5.4 we obtain the following complete list of conjugacy classes
of irreducible nilpotent
elements in all simple Lie algebras:
$A_{2n},\, B_n (n\neq 3),\, C_n,\, D_{2n}(a_{n-1}),\, G_2,\, F_4,\, F_4(a_2),\,
E_6(a_1),\, E_7, \, E_7(a_1), \,E_7(a_5), \,E_8, \,E_8(a_i)$ for 
$i=1,2,4,5,6,7$.
\end{remark}

In Tables 5.5 and 5.6 below, we list all nilpotent classes $e$ of mixed type
(combined in bushes) in the exceptional Lie algebras. For each $e$, we give 
the dimension of the centralizer $\fz(e+F)$ of a generic cyclic element $e+F$,
and the nilpotent part of $e+F$ in $\fa$ (the derived subalgebra of the 
centralizer of the semisimple part of $e+F$). For this we again used the 
programm by W. de Graaf \cite{G}.


\begin{table}[!h]\label{tab:5.5}
  \centering
  Table 5.5.  Nilpotent orbits of mixed type in $E_6$, $E_7$, $F_4$,
  and $G_2$\\[1ex]
  \begin{tabular}{c|c|c|c|c}  
\hline
$\fg$ & $e$ & $\dim \fz(e+F)$ &$\fa$ & nilpotent part of $e+F$ in $\fa$\\
\hline
$E_6$ & $A_2+A_1$ & 14 & $2A_2$ & $(1^3 ; 2,1)$\\
& $A_2 + 2A_1$ & 10 &&(2,1 ;2,1)\\

\cline{2-5}
 & $A_3 + A_1$ & 12 & $A_3$ & $(2,1^2)$\\
\cline{2-5}
& $A_4 + A_1$ & 6 & $A_1$ & (2)\\
\cline{2-5}
& $D_5 (a_1)$ & 10 & $2A_2$ & (2,1;2,1)\\
\hline
$E_7$ & $A_2 + A_1$ & 27 & $[A_5]''$ & $(2,1^4)$\\
 &  $A_2 + 2A_1$ & 21 && $(2^2 , 1^2)$\\
 & $A_2 + 3A_1$ & 19 && $(2^3)$\\
\cline{2-5}
&   $[A_3+A_1]''$ & 31 & $D_4 \oplus A_1$ & $(1^8 ;2)$\\
&   $ [A_3 + A_1]'$ & 23 && $(2^2 , 1^4 ; 1^2)$\\
&   $A_3 + 2A_1$ & 21 && $(2^2, 1^4; 2)$\\
\cline{2-5}
&   $D_4 (a_1 ) + A_1 $ & 11 & $[3A_1]''$ & $(1^2 ; 1^2;2)$\\
&   $A_3 + A_2$ & 9 && $(1^2;2;2)$\\
&   $A_3 + A_2 +A_1$ &7 && $(2;2;2)$\\
\cline{2-5}
 &   $D_4 +A_1$ & 27 & $[A_5]''$ & $(2,1^4)$\\
&   $D_5 (a_1)$ & 21 && $(2^2,1^2)$\\
 & $D_5 (a_1) + A_1$ & 19 && $(2^3)$\\
\cline{2-5}
 &  $A_4 +A_1$ & 9 & $A_2$ & (2,1)\\
 & $A_4 + A_2$ & 7 &  & (3)\\
\cline{2-5}
 & $A_5 + A_1$ & 21 & $D_4$ & $(2^2,1^4)$\\
\cline{2-5}
 & $D_5 + A_1$ & 11 & $[3A_1]''$ & $(1^2;1^2;2)$\\
 & $D_6 (a_1)$ & 9 && $(1^2; 2;2)$\\
& $E_7 (a_4)$ & 7 && (2;2;2)\\
\cline{2-5}
 & $E_7 (a_3)$ & 9 & $A_2$ & (2,1)\\
\cline{2-5}
 & $E_7 (a_2)$ & 7 & $[3A_1]''$ & (2;2;2)\\
\hline
$F_4$ & $A_2 + \tilde{A}_1$ & 6 & $\tilde{A}_2$ & (2,1)\\
\cline{2-5}
 & $C_3 (a_1)$ & 8 & $B_2$ & $(2^2,1)$\\
\hline

  \end{tabular}
\end{table}
\newpage

\begin{table}[!h]\label{tab:5.6}
  \centering
  Table 5.6.  Nilpotent orbits of mixed type in $E_8$\\[1ex]
  \begin{tabular}{c|c|c|c}  
\hline
$e$ &   $\dim \fz(e+F)$ &$\fa$ & nilpotent part of $e+F$ in $\fa$\\
\hline
$A_2 + A_1$ &  58 &  $E_6$  & $A_1$\\
$A_2 + 2A_1 $ & 48 && $2A_1$\\
$A_2 +3A_1$ & 40 && $3A_1$\\
\hline 
$A_3 + A_1$ & 50 & $D_6$ & $(2^2,1^8)$\\
$A_3 + 2A_1$ & 40 && $(2^4,1^4)$\\
\hline 
$D_4 (a_1) + A_1$   & 22 &    $D_4$ & $(2^2,1^4)$\\
$A_3 + A_2$ & 20   &     & $(2^4)$\\
$A_3 + A_2 +A_1$    & 16 && $(3,2^2,1)$\\
$D_4 (a_1) + A_2$    & 14 && $(3^2,1^2)$\\
\hline
$A_4+A_1$        & 20      & $A_4$ & $(2, 1^3)$\\
$A_4 + 2A_1$         & 16 && $(2^2 ,1)$\\
$A_4 + A_2$          & 14 && $(3,1^2)$\\
$A_4 +A_2 +A_1$     & 12 && (3,2)\\
\hline
$D_4 + A_1$         & 58 & $E_6$  &$A_1$\\
$D_5 (a_1)$          & 48 && $2A_1$\\
$D_5 (a_1) + A_1$    & 40 && $3A_1$\\
$D_4 + A_2$           & 38 && $A_2$\\
$D_5 (a_1) + A_2$    & 30 && $2A_1 + A_2$\\
\hline
$A_5 + A_1$      & 28   & $D_4+A_2$ & $(2^2,1^4;1^3)$\\
\hline
$E_6 (a_3 ) + A_1$   & 22 & $D_4$ & $(2^2, 1^4)$\\
\hline
$A_6 + A_1$           & 8     & $A_1$ & (2)\\
\hline
$D_5 + A_1$ & 22 &$D_4$ & $(2^2,1^4)$\\
$D_6 (a_1)$ & 20 && $(2^4)$\\
$E_7 (a_4)$ & 16 && $(3,2^2,1)$\\
$D_5 + A_2$ & 14 && $(3^2,1^2)$\\
$D_7 (a_2)$ & 12 && $(4^2)$\\

\hline
$E_6 (a_1) + A_1$ & 10 & $A_2$ & (2,1)\\
$E_8 (b_6) $  &8   && (3)\\
\hline
$E_7 (a_3)$ & 20 & $A_4$ & $(2,1^3)$\\
$D_7 (a_1)$ & 16 && $(2^2,1)$\\
\hline
$E_6 + A_1$ & 22 & $D_4$ & $(2^2,1^4)$\\
$E_7 (a_2)$ & 16 && $(3,2^2,1)$\\
$E_8 (b_5)$ & 14 && $(3^2,1^2)$\\
\hline
$E_8 (b_4)$& 8 & $A_1$ & (2)\\
\hline
$E_8 (a_3)$ & 10 & $A_2$ & (2,1)\\
\hline
  \end{tabular}
\end{table}

\newpage
\vspace{-2ex}

\newpage

\vspace*{1ex}
\section{Diagrams of regular elements of exceptional Weyl groups}
\label{sec:6}

Let $\fh$ be a Cartan subalgebra of a simple Lie algebra~$\fg$
and let~$W\subset \Aut_{\FF} \fh$ be the Weyl group of~$\fg$ .
An element $w \in W$ is called {\em regular} \cite{S} if it has
an eigenvector $a \in \fh$, which is  a regular element
(i.e.,~the centralizer of ~$a$ in $\fg$ is $\fh$), and $w\neq 1$.  The order of a regular
element of~$W$ is called a {\em regular number}.  Springer proved
in \cite{S} that, up to conjugacy, there exists at most one,
up to conjugacy,
regular element in~$W$ of a given order and listed all possible
regular numbers, along with the characteristic polynomials of the
corresponding regular elements.

By definition, if~$e$ is a nilpotent element
of regular semisimple type, then the corresponding generic cyclic
element $e+F$ is regular semisimple, hence its centralizer is a
Cartan subalgebra, which we denote by~$\fh'$.  As explained in
the introduction,
we associate with~$e$ a $\ZZ /m\ZZ$ grading (0.4), hence an
inner automorphism of $\fg$ of order~$m$, denoted by $\sigma_e$, such that
%
$\sigma_e (e) = \epsilon e$,
%
where $\epsilon$ is $m$-th primitive root of 1. The order $m$ is given by
the RHS of (\ref{eq:0.2}) if $e$ is odd, and by its half if $e$ is even.

The automorphism  $\sigma_e$ leaves $\fh'$ invariant and induces on it a
regular element of the Weyl group, which we denote by~$w_e$.
Note that $\sigma_e$ and $w_e$ have the same order if $e$ is even.
It follows
from the classification of nilpotent elements of regular
semisimple type, given in Sections~4 and 5, that, with the exception of
the nilpotent of $\fs\ell_n$, corresponding to the partition $(p^m\,
, \, 1)$ (hence $n=mp+1$), where $p$ is even, all nipotent elements
of $\fg$ of regular semisimple type are even.

In the following Tables 6.1 - 6.5 we list the diagrams of all
inner automorphisms $\sigma$ of all exceptional Lie algebras $\fg$,
which induce the
regular elements $w$ of the Weyl group $W$ and have the same order $m$
as $w$. In the third column we list $\fg^{\sigma}$ and in the last one the
dimension of $\fh^w$, if it is non-zero. By Kostant's theorem \cite{K2},
there is a unique
lift of $w\in W$ if $\fh^w=0$. However there can be several conjugacy classes
of such $\sigma$, even of the same order as $w$, if $\fh^w\neq 0$.
We list in the lower parts of the tables the $\sigma$ of the form
$\sigma =\sigma_e$, where $e$ are
distinguished nilpotent elements of (regular) semisimple type
(for all of them $\fh^w$ must be 0).
In the middle part the tables we list $\sigma$ of the form $\sigma_e$,
corresponding to remaining nilpotents of regular
semisimple type (among exceptional Lie algebras they exist only
for $E_6$ and $E_7$); for each regular number they appear first in the tables.
After them we list those $\sigma$ of the same order, which induce the same $w$.
Finally, it turns out that in all
exceptional Lie algebras (in fact, in all simple Lie algebras,
except for $\fsl_n$ with $n$ odd)
all regular numbers are divisors of the orders of the elements $w_e$,
corresponding to nilpotents~$e$ of regular semisimple type.
Hence, taking the appropriate powers of $\sigma_e$, we get the
diagrams of finite order automorphisms of $\fg$ which induce
all the remaining regular elements of~$W$.  They are given
in the upper parts of the tables.
It turns out that in all cases, except for
$E_6^{(1)}$ and $E_7^{(1)}$, we have
$\fh^w =0$ for all regular $w$. In the last column of Tables 6.1 and 6.2
we list $\dim \fh^w$ when it is not~0 (in all Tables 6.3-6.5 it is ~0).

It is interesting to note that all elements of the form
$e^{\frac{\pi i\rho^\vee}{m}}$
, where $\rho^\vee$ is the half of the sum
of positive coroots and m is a regular number, are conjugate to an element
from the tables, which is the first one for given $m$. It turns out that the
conjugacy classes of the $\sigma$ represented in the tables contain all $m$-th
order inner automorphisms of $\fg$, whose fixed point set has minimal possible
dimension among all $m$-th order inner automorphisms, cf. \cite{EKV}.
Note also that the second element for given $m$ in the tables is
a conjugate of the element of the form $e^{\frac{\pi i(\rho^\vee -\omega)}{m}}$
where $\omega$ is the fundamental coweight, attached to the branching
node of the Dynkin diagram. Of course in
the case when the fixed point subalgebra $\fg^{\sigma}$ is~$\fh$, we get the
well-known conjugacy class of Coxeter--Kostant elements of order
equal to the Coxeter number $h$.

\begin{table}[!h]
  \centering
Table 6.1.  Diagrams of regular elements of $W_{E_6}$\\[1ex]\label{tab:6.1}
  \begin{tabular}{c|c|c|c}
    \hline
order m&   $E^{(1)}_6$  & fixed point set & $\dim \fh^w$\\
\hline  
&0&\\
&1&\\
2 &0\,0\,0\,0\,0 & $A_1 \oplus   A_5$ & 2\\
& 0&\\
&0&\\
3 &0\,0\,1\,0\,0 & $A_2 \oplus   A_2\oplus  A_2$\\
\hline
 &1&\\
&0&\\
4 & 0\,0\,1\,0\,0 & $A_1 \oplus   A_2 \oplus   A_2 \oplus  T_1$ &2\\
 &1&\\
&    0 &\\
 4& 1 \, 0\, 0\, 1 \, 0 & $A_3 \oplus A_1 \oplus T_2$&2\\
&1&\\
&1&\\
8 & 1\,0\,1\,0\,1 & $A_1 \oplus   A_1 \oplus  T_4$&1\\
 &0&\\
&1&\\
8 & 1\, 1\, 0\, 1\,1 & $A_1 \oplus A_1 \oplus T_4$&1\\
\hline%
 &1&\\
&0&\\
6 &  1\,0\,1\,0\,1 & $A_1 \oplus   A_1 \oplus   A_1 \oplus  T_3$\\
 &1&\\
&1&\\
9 & 1\,1\,0\,1\,1   & $A_1 \oplus   T_5$\\
 &1&\\
&1&\\
12 &  1\,1\,1\,1\,1 & $T_6$\\
\hline
  \end{tabular}
\end{table}


\begin{remark}
\label{rem:7.1}
 The elements $\sigma(m)=e^{\frac{\pi i\rho^\vee}{m}}$,
where $m$ is a regular number, are liftings of all regular elements of $W$
for all classical simple Lie algebras $\fg$ as well. In the case $m=h$
it is the Coxeter-Kostant element, whose diagram has all labels equal 1.
In addition we have $\sigma(n-1)$ for $\fg=\fsl_n$, whose diagram has one 0
label and all other labels equal 1, and we have $\sigma(n)$ for $\fg=\so_{2n}$,
whose diagram is as follows for $n$ even and odd respectively:
\vspace{-1ex}
\begin{equation*}
  \begin{array}{cccccccccccccccccccccccccccccccc}
       &1&&&&& 1&&&&&& && &&1&&&&&& 1 \\
 1 &0&1&0&\ldots &1&0&1&&&&&&&&1 &0&1&0&\ldots &1&0&1&0
  \end{array}
\end{equation*}
%
The $d$-th powers of the above elements, where $d$ divide their orders, are
liftings of all regular elements of $W$, up to conjugacy. Removing $1$ at the
extra node, we obtain the characteristic of a nilpotent element of $\so_{2n}$
of semisimple type, corresponding to the partition $(n+1,n-1)$ if $n$
is even, and to the partition $(n,n)$ if $n$ is odd.
\end{remark}

\vspace{5ex}

\begin{table}[!h]
  \centering\label{tab:6.2}
Table 6.2.  Diagrams of regular elements of $W_{E_7}$\\[1ex]
  \begin{tabular}{c|c|c|c}
    \hline
order m&   $E^{(1)}_7$  & fixed point set & $\dim \fh^w$\\
\hline  
&1&\\
2 &0\,0\,0\,0\,0\,0\,0 & $A_7$\\
&0&\\
3 &0\,0\,0\,0\,1\,0\,0 & $A_2 \oplus   A_5$ & 1\\
\hline
&0&\\
7 & 0\,1\,0\,1\,0\,0\,1 & $A_1 \oplus   A_1 \oplus  A_1\oplus
A_2 \oplus  T_2$ &1\\
&0&\\
9 & 0\,1\,0\,1\,0\,1\,1 & $A_1 \oplus   A_1\oplus  A_1\oplus  A_1
\oplus  T_3$ & 1\\
& 1 &\\
9 & 1\, 1\,0\,0\,1\,0\,1 & $A_2 \oplus A_1 \oplus T_4$&1\\
\hline
&0&\\
6 &  1\,0\,0\,1\,0\,0\,1 & $A_1 \oplus   A_2 \oplus   A_2 \oplus  T_2$\\
 &1&\\
14 & 1\,1\,1\,0\,1\,1\,1\,   & $A_1 \oplus   T_6$\\
&1&\\
18 &  1\,1\,1\,1\,1\,1\,1 & $T_7$\\
  \end{tabular}
\end{table}


%
\begin{table}[!h]
\centering
Table 6.3.  Diagrams of regular elements of $W_{E_8}$ \\[1ex]
\label{tab:6.3}
  \centering
  \begin{tabular}{c|c|c}
    \hline
order m&   $E^{(1)}_8$  & fixed point set\\
\hline  
&\qquad 0&\\
2 &0\,0\,0\,0\,0\,0\,0\,1 & $D_8$\\
&\qquad 1&\\
3 &0\,0\,0\,0\,0\,0\,0\,0 & $A_8$\\
&\qquad 0&\\
4 & 0\,0\,0\,1\,0\,0\,0\,0 & $A_3 \oplus  D_5$\\
&\qquad 0&\\
5 & 0\,0\,0\,0\,1\,0\,0\,0 & $A_4 \oplus   A_4$\\
&\qquad 0&\\
8 & 0\,1\,0\,0\,0\,1\,0\,0 & $A_1 \oplus  A_1 \oplus  A_2 \oplus  A_3 \oplus  T_1$\\
\hline
&\qquad 0&\\
6 &  1\,0\,0\,0\,1\,0\,0\,0 & $A_3 \oplus   A_4  \oplus  T_1$\\
 &\qquad 0&\\
10 & 1\,0\,1\,0\,0\,1\,0\,0   & $A_1\oplus  A_1\oplus  A_2\oplus  A_2\oplus  T_2$\\
&\qquad 0&\\
12 &  1\,0\,1\,0\,0\,1\,0\,1 & $A_1\oplus  A_1\oplus  A_1\oplus  A_2\oplus    T_3$\\
&\qquad 0&\\
15 &  1\,1\,0\,1\,0\,1\,0\,1 & $A_1\oplus  A_1\oplus  A_1\oplus  A_1\oplus    T_4$\\
&\qquad 1&\\
20 &  1\,1\,1\,0\,1\,0\,1\,1 & $A_1\oplus  A_1\oplus    T_6$\\
&\qquad 1&\\
24 &  1\,1\,1\,1\,1\,0\,1\,1 & $A_1\oplus   T_7$\\
&\qquad 1&\\
30 &  1\,1\,1\,1\,1\,1\,1\,1 & $T_8$ \\
\hline
  \end{tabular}
\end{table}

\newpage

\vspace*{4ex}

\begin{table}[!h]
Table 6.4.  Diagrams of regular elements of $W_{F_4}$\\[1ex]
\label{tab:6.4}
  \centering
  \begin{tabular}{c|c|c}
    \hline
order m&   $F^{(1)}_4$\,\,$ \circ-\circ -\circ\Rightarrow \circ-\circ$  & fixed point set\\
\hline  
2 &0\,1\,0\,0\,0 & $A_1\oplus  C_3$\\
3 &0\,0\,1\,0\,0 & $A_2 \oplus  A_2$\\
\hline
4 & 1\,0\,1\,0\,0 & $A_1 \oplus  A_2 \oplus  T_1$\\
6 & 1\,0\,1\,0\,1 & $A_1 \oplus   A_1 \oplus  T_2$\\
8 & 1\,1\,1\,0\,1 & $A_1 \oplus  T_3$\\
12 & 1\,1\,1\,1\,1 & $T_4$\\
\hline
  \end{tabular}
\end{table}

\vspace*{4ex}

\begin{table}
  \centering
Table 6.5.  Diagrams of regular elements of $W_{G_2}$\\[1ex]
\label{tab:6.5}
  \begin{tabular}{c|c|c}
    \hline
order m&   $G^{(1)}_2$\,\,$ \circ - \circ\Rrightarrow \circ $  & fixed point set\\
\hline  
2 &0\,1\,0 & $A_1\oplus  A_1$\\
\hline
3 &1\,1\,0 & $A_1 \oplus  T_1$\\
6 & 1\,1\,1 & $T_2$\\
\hline
  \end{tabular}
\end{table}

\newpage

\begin{remark}
\label{rem:7.2}

Let $e$ be a nilpotent element of $\fg$ of semisimple type.  Then
we can canonically associate with $e$ (rather its conjugacy class) 
a conjugacy class in $W$ as
follows.  Let $e+F$ be a generic cyclic element, associated with
$e$, which, by definition, is semisimple.  As has been explained
in the Introduction, we associate with $e$
a finite order inner automorphism $\sigma_e$ of $\fg$, with the diagram being
the extended Dynkin diagram whose labels $s_i$ on the Dynkin
subdiagram are the labels of the characteristic of $e$
(resp. the halves of these labels) if $e$ is an odd (resp. even)
nilpotent, and the label $s_0$ of the extra node is $2$
(resp.~$1$).  Let $\fa$ be the semisimple part of the centralizer
$\fz(e+F)$ of $e+F$ in $\fg$.  Since $e+F$ is an eigenvector
of $\sigma_e$, the reductive subalgebra $\fz(e+F)$, and
hence~$\fa$, is $\sigma_e$-invariant.  Let $\fh_0$ be a
Cartan subalgebra of the fixed point set of $\sigma_e$ in $\fa$
and let $\fh'$ be the centralizer of $\fh_0$ in $\fz(e+F)$.  Then by
\cite{Ka2}, Lemma~8.1b, $\fh'$ is a Cartan subalgebra of
$\fz(e+F)$, hence of $\fg$. Therefore $\sigma_e$ induces an element 
$w_e$ of $W(\fh')$, which
fixes $\fh_0$ pointwise.  Hence $w_e$ is independent of the
choice of $\fh_0$.  We thus obtain a well defined map
from the set of conjugacy classes of nilpotent elements of
semisimple type in $\fg$ to the set of conjugacy classes in
$W$.  
%
This map can be extended to the conjugacy classes of nilpotent elements
of ``quasi-semisimple'' type as follows. By results of Sections 4 and 5,
for any nilpotent element $e$, which is not of regular semisimple type,
there exists a proper regular reducing subalgebra $\fq$. If $e$ is of 
mixed type, it has decomposition $e=e^s + e^n$, where $e^s\in \fq$
is of semisimple type, and $e^n\in \fz(\fq)$ is a nilpotent element of
lower depth that $e$. Let $w_1\in W$ be an element, corresponding
to $e$, as constructed above. If $e^n$ is of mixed type, let $w_2\in W$
be the element, corresponding to $(e^n)^s$. If $e^n$ is of semisimple type
in $\fg$, we let $w=w_1w_2\in W$ correspond to $e$. Otherwise, we repeat this
procedure for the element $(e^n)^n$, etc. If this sequence of steps brings us 
to a nilpotent element of semisimple type, we call $e$ a nilpotent element of
quasi-semisimple type. Thus, to any nilpotent conjugacy class of 
quasi-semisimple type in $\fg$ we have associated a conjugacy class in $W$.
This map is probably closely related to that, constructed in 
\cite{KL}, \cite{L}, on the set of all nilpotent classes.
Note that in $\fsl_n$ and $\fs\fp_n$ all nilpotent classes are of 
quasi-semisimple type, that nilpotent classes of nilpotent type are not of 
quasi-semisimple type, and only they are not of quasi-semisimple type in all
exceptional $\fg$, with the exception of $E_8$, where there are four
more such classes.
\end{remark}


\begin{example}
  \label{ex:7.3}
Let $e=e_{-\alpha_0}$ be the root vector attached to the highest root
$-\alpha_0$ of a simple Lie algebra $\fg$.  Its orbit has minimal
dimension among all non-zero nilpotent orbits.  The depth of the
corresponding $\ZZ$-grading (\ref{eq:0.1}) is $2$, $\fg_{\pm 2}$
being $\FF e_{\mp \alpha_0}$.  In all cases $e$ is a nilpotent element of
semisimple type.  If $\fg \neq \fs\fp_{2n}$, then $e$ is odd and all
the non-zero labels of the diagram of $\sigma_e$ are: $2$ at the
extra node and $1$ at the adjacent to it nodes; we have:  $|\sigma_e|=4$
and $w_e$ is the reflection with respect to $\alpha_0$.  If $\fg =
\fs\fp_n$, then $e$ is even and all the non-zero labels of the
diagram of $\sigma_e$ are $1$ at the two end nodes; we have:
$|\sigma_e | =2$ and $w_e$ is again the reflection with respect
to $\alpha_0$.

\end{example}

\begin{definition}  \label{def:7.4}

We call an element $w \in W$ of order~$m$ quasiregular if for
any proper divisor~$s$ of~$m$ the element $ w^s$ does not fix a root.

\end{definition}

\begin{lemma}  \label{lem:7.5}

If $w \in W$ is regular, then it is quasiregular.

\end{lemma}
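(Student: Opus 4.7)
The plan is to derive a contradiction by evaluating, on a regular eigenvector of $w$, any root supposedly fixed by $w^s$.

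First I would fix a regular eigenvector $a\in\fh$ with $w(a)=\zeta a$, whose existence is guaranteed by the hypothesis that $w$ is regular. A preliminary step is to note that $\zeta$ must be a primitive $m$-th root of unity, where $m=|w|$. This is elementary: if the order $d$ of $\zeta$ were a proper divisor of $m$, then $w^d(a)=\zeta^d a=a$, so $w^d$ would fix the regular vector $a$; but a regular vector has trivial stabilizer in $W$, forcing $w^d=1$ and contradicting $|w|=m$. Hence $d=m$.

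Next I would suppose, for contradiction, that $s$ is a proper divisor of $m$ and that $w^s$ fixes some root $\alpha\in\fh^*$. Using the contragredient action $(w\alpha)(h)=\alpha(w^{-1}h)$, evaluating the identity $w^s\alpha=\alpha$ on the vector $a$ gives
\[
\alpha(a)=(w^s\alpha)(a)=\alpha(w^{-s}a)=\zeta^{-s}\alpha(a),
\]
so $(1-\zeta^{-s})\alpha(a)=0$. Since $\zeta$ is a primitive $m$-th root of unity and $0<s<m$, we have $\zeta^{-s}\neq 1$, hence $\alpha(a)=0$. This contradicts the regularity of $a$, which is equivalent to $\alpha(a)\neq 0$ for every root $\alpha$ (i.e.\ to $\fz(a)=\fh$).

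I do not anticipate a real obstacle; the only delicate point is confirming that the eigenvalue on a regular eigenvector has order exactly $m$, and this is immediate from the basic fact that the stabilizer of a regular vector in $W$ is trivial.
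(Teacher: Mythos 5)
Your proof is correct and takes essentially the same route as the paper: the paper's one-line proof just cites Springer for the fact that the eigenvalue of a regular $w$ on a regular eigenvector is a primitive $m$-th root of unity, and the evaluation argument $\alpha(a)=\zeta^{-s}\alpha(a)$ you spell out is exactly what is implicit there. The only point worth noting is that your elementary derivation of primitivity rests on the stabilizer in $W$ of a regular vector of $\fh$ being trivial, which over $\CC$ is Steinberg's theorem (stabilizers are generated by the reflections they contain) --- true and standard, but not quite immediate.
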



\begin{proof}

This follows from the fact, proved in \cite{S}, that the eigenvalue
of a regular eigenvector
of a regular element $w$ of order $m$ is an $m$-th primitive root of unity.

%
%
\end{proof}

\begin{theorem}
  \label{th:7.6}
If $w$ is a quasiregular element of order~$m$ in~$W$ and it has a
lift $\sigma$ of the same order in the normalizer of $\fh$ in the
adjoint group, then
\begin{equation}
  \label{eq:7.2}
    \frac{|\Delta |}{m} = \dim \fg^\sigma - \dim \fh^w\, ,
\end{equation}
where $|\Delta |$ is the number of roots of $\fg$ and
$\fg^{\sigma}$ (resp.~$\fh^w$) denotes the fixed point set of
$\sigma$ in~$\fg$ (resp. of $w$ in $\fh$).
\end{theorem}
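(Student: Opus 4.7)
The plan is to count $\dim \fg^\sigma$ by decomposing $\fg$ into $\sigma$-eigenspaces under the root-space decomposition relative to $\fh$. Since $\sigma$ normalizes $\fh$ and acts on it as $w$, the Cartan contribution to $\fg^\sigma$ is precisely $\fh^w$, so it suffices to show that the sum of the contributions of the root spaces to $\fg^\sigma$ is exactly $|\Delta|/m$. The idea is to group root spaces according to the $w$-orbits on $\Delta$ and analyze each orbit separately.

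Concretely, I would fix a $w$-orbit $\mathcal{O}\subset\Delta$ of some length $k$, choose $\alpha\in\mathcal{O}$ and a generator $e_\alpha$ of $\fg_\alpha$, and note that $\sigma$ cyclically sends $\sigma^i e_\alpha \in \fg_{w^i\alpha}$ for $i=0,\ldots,k-1$, while $\sigma^k$ preserves $\fg_\alpha$ and hence acts there as some scalar $\zeta_\alpha$. A direct eigenvalue computation on this $k$-dimensional $\sigma$-invariant subspace $V_{\mathcal{O}} = \bigoplus_{i=0}^{k-1}\fg_{w^i\alpha}$ shows that $\dim(V_{\mathcal{O}})^\sigma$ equals $1$ if $\zeta_\alpha=1$ and $0$ otherwise. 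Because $\sigma$ has order $m$, we automatically have $\zeta_\alpha^{m/k}=1$; so when $k=m$ the contribution is always $1$.

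The key step — and the only place where the hypotheses on $w$ and $\sigma$ are used nontrivially — is to rule out $w$-orbits on $\Delta$ of length $k<m$. Here the quasiregularity of $w$ enters: if $k<m$ and $k\mid m$ then $k$ is a proper divisor of $m$, but the orbit length $k$ would give $w^k(\alpha)=\alpha$, contradicting the assumption that $w^s$ fixes no root for any proper divisor $s$ of $m$. Therefore every $w$-orbit on $\Delta$ has length exactly $m$, and there are $|\Delta|/m$ such orbits. Combined with the previous step and with $\sigma^m=1$, this yields
\begin{equation*}
\dim\fg^\sigma \;=\; \dim\fh^w + |\Delta|/m,
\end{equation*}
which is (\ref{eq:7.2}).

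The main potential obstacle I anticipate is the eigenvalue bookkeeping on each $V_{\mathcal{O}}$: one has to be careful that the cyclic permutation-plus-scalar operator on a $k$-dimensional space has $1$ among its eigenvalues exactly when the wrap-around scalar $\zeta_\alpha$ equals $1$. Everything else is essentially forced: the hypothesis $|\sigma|=m$ kills the $\zeta_\alpha$ obstruction for the surviving orbits, and quasiregularity kills the short orbits.
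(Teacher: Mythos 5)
Your proof is correct and follows essentially the same route as the paper: quasiregularity forces every $w$-orbit on $\Delta$ to have length exactly $m$, and the hypothesis $|\sigma|=m$ kills the wrap-around scalar so that each orbit of root spaces contributes exactly one dimension to $\fg^\sigma$, with the Cartan part contributing $\dim\fh^w$. The paper states this more tersely (as $|C(\beta)|=m=|\tilde C(e_\beta)|$ for every root $\beta$), but the underlying counting argument is identical to yours.
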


\begin{proof}
Let $C$ (resp.~$\tilde{C}$) be the cyclic group, generated by $w$
(resp.~$\sigma$).  Then, by Lemma \ref{lem:7.5},
$|C(\beta)| = m = |\tilde{C}(e_{\beta})|$ for
any root $\beta$ and a root vector $e_\beta$, attached to
$\beta$.  Hence $\dim \fg^\sigma = \frac{|\Delta|}{m} + \dim \fh^w$.
\end{proof}

\begin{remark}
  \label{rem:7.7}

By the construction of $w_e$, we have:
\begin{equation*}
  \dim \fh^{w_e} \leq {\rm rank }\, \fr \, ,
\end{equation*}
where $\fr$ is the reductive part of the centralizer of $e$ in
$\fg$.  It follows that $\fh^{w_e} =0$ if $e$ is a distinguished
nilpotent of $\fg$.  Using formula (\ref{eq:7.2}) one can compute
$\dim \fh^{w_e}$ for any regular~$w\in W$.
%
%
%

\end{remark}

\begin{remark}
  \label{rem:7.8}

In the cases when a regular $w\in W$ has two liftings, the second one in
Tables 6.1 and 6.2 is obtained using good gradings for~$e$, different from
Dynkin gradings, found in \cite{EK}.
%
%
%
One of the $E_6$ examples was found in \cite{P2}.

The group $S$ of symmetries of the Dynkin diagram of~$\fg$ acts
on~$\fh$ by permuting simple roots, and we may consider the
twisted Weyl group $W^{tw} = S \ltimes W$.  In \cite{S} Springer
also studied  the regular elements of $W^{tw}\backslash W$,
called twisted regular, and
showed that again there is at most one such twisted regular
element of given order and found these orders.  Lifts of twisted
regular elements are outer automorphisms of finite order of $\fg$,
hence they are
described by labeled twisted extended Dynkin diagrams \cite{Ka2}, \cite{OV}.

Given a symmetry $\nu$ of the Dynkin diagram of a simple Lie
algebra, let $\fp$ be its fixed point set.  Then the
corresponding twisted extended Dynkin diagram is obtained by
adding to the Dynkin diagram of $\fp$ one extra node.  If now $e$
is an even nilpotent elements of $\fp$, we associate to it a
labeling of the twisted extended Dynkin diagram by putting halves
of the labels of the characteristic of $e$ in $\fp$ at the nodes
of the Dynkin diagram of $\fp$ and $1$ at the extra node.  If $e$
is of regular semisimple type in $\fp$, we obtain a finite order
automorphism $\sigma_e$ of $\fg$ and the corresponding element
$w_e$ of the same order of $W^{tw}\backslash W$ acting on the
centralizer of a generic cyclic element $e+F$.

\end{remark}

In the following Tables 6.6 and 6.7 we list in the lower part
the diagrams of outer finite order automorphisms of $E_6$ and $D_4$
from the connected component of $\nu$ of order 2 and 3 respectively,
which induce the regular elements of
$W^{tw}\backslash W$, corresponding to distinguished nilpotent
elements of $\fp$, and in the upper part the lifts of the
remaining twisted regular elements.

\begin{center}
Table 6.6  Diagrams of regular elements of
$W^{tw}_{E_6}$\\[1ex]
\label{tab:6.6}
  \centering
  \begin{tabular}{c|c|c|c}
    \hline
order m&   $E^{(2)}_6$\,\,$ \circ-\circ -\circ\Leftarrow
\circ-\circ$  & fixed point set & $\dim \fh^w$\\
\hline  
2 &0\,0\,0\,0\,1 & $C_4$\\
4 & 0\,0\,0\,1\,0 & $A_3 \oplus  A_1$\\
8 & 1\,0\,0\,1\,1 & $A_2 +T_2$ &1\\
8 & 0\,1\,0\,1\,0 & $A_1 \oplus A_1 \oplus A_1 \oplus  T_1$ &1\\
\hline
6 & 1\,0\, 0\,1\,0 & $A_2 \oplus   A_1 \oplus  T_1$\\
12 & 1\,1\,0 \,1\,1 & $A_1 \oplus  T_3$\\
18 & 1\,1\,1\,1\,1 & $T_4$\\

  \end{tabular}
\end{center}

\vspace*{4ex}

\begin{center}
Table 6.7.  Diagrams of regular elements of $W^{tw}_{D_4}$\\[1ex]
\label{tab:6.7}
  \centering
  \begin{tabular}{c|c|c}
    \hline
order m&   $D^{(3)}_4$\,\,$ \circ-\circ\Lleftarrow \circ$  & fixed point set\\
\hline  
3 &0\,0\,1 & $A_2$\\
\hline
6 & 1\,0\,1 & $  A_1 \oplus  T_1$\\
12 & 1\,1\,1 & $T_2$\\
\hline
  \end{tabular}
\end{center}

\vspace*{4ex}

\begin{remark}
\label{rem:7.9}
Theorem~\ref{th:7.6} holds also for twisted regular elements. It follows
that for all cases listed in Tables 6.6
and 6.7 we have $\fh^w=0$, except for the element $w$ of order 8 in
$W^{tw}_{E_6}$, when
$\dim \fh^w=1$.
The latter element is the only twisted regular element which is not a power
of an element of the form $\sigma_e$, but the
diagrams of its liftings can be easily obtained using formula (\ref{eq:7.2})
(one can see that $\dim \fh^w=1$ from Table~8 of \cite{S}).
It turns out that again, all automorphisms are of the form
$\nu e^{\frac{\pi i\rho^\vee}{m}}$,
where $m$ is from the first column,
occur in these tables (in the case $m=8$ it is the first one).
\end{remark}

\begin{proposition}
  \label{prop:6.11}
Let $G^0|\fg^1$ be a theta group, attached to an inner finite order
automorphism $\sigma$ of $\fg$, such that there exists $v \in
\fg^1$ whose $G^0$-orbit is closed and the stabilizer $G^0_v$ is
finite.  Then the centralizer $\fh: = \fg_v$ of $v$ in $\fg$ is a
Cartan subalgebra of $\fg$, hence $\sigma$ induces an element $w$
of the Weyl group $W$, attached to $\fh$.  Moreover, $w$ is a
regular element of $W$ and $\fh^w=0$.  Conversely, all regular
elements of $W$ with zero fixed point set on $\fh$ are obtained
in this way.

\end{proposition}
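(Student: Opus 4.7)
The plan is to combine Vinberg's theorem on closed orbits in theta representations with Springer's definition of regular elements of $W$, handling the forward direction first and then the converse.

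\emph{Forward direction.} Since the orbit $G^0 v$ is closed, by Vinberg's theorem $v$ is semisimple in $\fg$, so $\fg_v$ is reductive of full rank $\rk \fg$. The identity $\sigma(v)=\epsilon v$ shows $\sigma$ preserves $\fg_v$, and its zero-graded piece $\fg^0\cap\fg_v=\Lie\, G^0_v$ vanishes by the finiteness hypothesis. The derived subalgebra $\fg_v'=[\fg_v,\fg_v]$ is $\sigma$-invariant and semisimple, and $\sigma$ restricts to it as a finite-order automorphism; since any finite-order automorphism of a nonzero semisimple Lie algebra has nonzero fixed-point subalgebra, we must have $\fg_v'=0$. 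Hence $\fg_v=\fh$ is abelian of full rank, i.e.\ a Cartan subalgebra of $\fg$. The element $w\in W(\fh)$ induced by $\sigma|_\fh$ satisfies $\fh^w=\fg^0\cap\fh=0$, and $v\in\fh$ is simultaneously a regular element of $\fg$ (its centralizer in $\fg$ being $\fh$) and an $\epsilon$-eigenvector of $w$, so $w$ is regular in Springer's sense.

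\emph{Converse.} Given a regular $w\in W$ of order $m$ with $\fh^w=0$, the Kostant theorem cited above the proposition provides a lift of $w$ to an inner automorphism $\sigma$ of $\fg$ of order $m$, yielding the grading $\fg=\bigoplus_{j\in\ZZ/m\ZZ}\fg^j$. Let $v\in\fh$ be a regular eigenvector of $w$; by Springer's theorem its eigenvalue is a primitive $m$-th root of unity, which I may take to be the chosen $\epsilon$, so that $v\in\fg^1$. The element $v$ is semisimple in $\fg$, hence $G^0 v$ is closed by Vinberg; and $\Lie\, G^0_v=\fg^0\cap\fg_v=\fg^0\cap\fh=\fh^w=0$, so the stabilizer is finite.

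\emph{Main obstacle.} The genuinely substantive step is ruling out a nontrivial derived part of $\fg_v$: it depends on the standard but nonformal fact that a finite-order automorphism of a nonzero semisimple Lie algebra has nonzero fixed subalgebra (provable via Steinberg's theorem that such an automorphism fixes a Borel subalgebra). Everything else unpackages Vinberg's semisimple/closed-orbit dichotomy together with the Springer and Kostant theorems that the paper has already invoked.
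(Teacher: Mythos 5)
Your proposal is correct and follows essentially the same route as the paper: Vinberg's closed-orbit criterion gives semisimplicity of $v$, the finiteness of $G^0_v$ gives a fixed-point-free finite-order automorphism of the reductive centralizer $\fg_v$, forcing $\fg_v$ to be a Cartan subalgebra, and the converse is the same unwinding via the Kostant lift and Springer's primitivity of the eigenvalue. The only (cosmetic) difference is that you justify the key fact --- that a finite-order automorphism of a nonzero semisimple Lie algebra has nonzero fixed-point set --- via Steinberg's theorem applied to the derived subalgebra $[\fg_v,\fg_v]$, whereas the paper simply cites Chapter~8 of Kac's book for the conclusion that $\fg_v$ is a Cartan subalgebra.
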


\begin{proof}
Since the $G^0$-orbit of $v$ is closed, $v$ is a semisimple
element of $\fg$ \cite{V}, hence its centralizer $\fg_v$ is
reductive.  The automorphism $\sigma$ induces a finite order
automorphism of $\fg_v$ with zero fixed point set.  Hence $\fh
: =\fg_v$ is a Cartan subalgebra \cite{Ka1}, Chapter~8, and $v$ is a
regular element of $\fg$, so $\sigma$ induces a regular Weyl
group element $w$ on $\fh$.  We have $\fh^w=0$ since the fixed
point set of $\sigma$ on $\fg_v$ is zero.

Conversely, if $w$  is a regular element of $W$, with a regular
eigenvector $v \in \fh$ and such that $\fh^w =0$, then its
(unique, up to conjugacy) lift $\sigma$ in $G$ obviously defines
a theta group $G^0|\fg^1$ for which $v$ has a closed $G^0$-orbit
with a finite stabilizer.
\end{proof}

\begin{remark}
\label{rem:7.12}
All theta groups with properties, described in
Proposition \ref{prop:6.11},
have been independently found in a recent paper \cite{G} on classification of
theta groups of positive rank.
Their tables are in complete agreement
with our Tables 5.1--5.4 and 6.6, 6.7, as predicted by Proposition
\ref{prop:6.11}.


\end{remark}


\end{document}